\documentclass{article}
\usepackage[utf8]{inputenc}
\usepackage{amsthm,amsmath,amssymb,placeins}
\usepackage[usenames,dvipsnames]{xcolor}
\usepackage{tikz}
\usepackage{multicol}
\usepackage{enumerate}
\usepackage{xparse}
\usepackage{authblk}
\usepackage{soul}
\usepackage{tabularx} 

\usepackage{authblk}

\usepackage{hyperref}
\usetikzlibrary{patterns}

\setlength\textwidth{6.5in}
\setlength\textheight{8.25in}
\setlength\oddsidemargin{0in}
\setlength\evensidemargin{0in}

\newtheorem{thm}{Theorem}

\newtheorem{cor}[thm]{Corollary}

\newtheorem{lemma}[thm]{Lemma}

\newtheorem{prop}[thm]{Proposition}

\theoremstyle{definition}
\newtheorem*{defn}{Definition}
\newtheorem{quest}{Question}

\renewcommand\det{\operatorname{Det}}
\newcommand\dist{\operatorname{Dist}}
\newcommand\mbf{\mathbf}
\renewcommand\a{{\mathbf a}}
\renewcommand\b{{\mathbf b}}
\newcommand\x{{\mathbf x}}
\newcommand\V{{\mathbf v}} 
\newcommand\y{{\mathbf y}}
\newcommand\e{{\mathbf e}}
\renewcommand\c{{\mathbf c}}

\newcommand\U{{\mathbf u}}
\newcommand\0{{\mathbf 0}}
\newcommand\1{{\mathbf 1}}

\newcommand\Z{{\mathbb Z}}
\newcommand\aut{{\rm Aut}}

\renewcommand\th{^{th}}

\providecommand{\keywords}[1]
{
  \textbf{{Keywords ---}} #1
}

\providecommand{\MSC}[1]
{
  \textbf{{MSC Codes ---}} #1
}

\begin{document} 

\title{Symmetry Parameters of Various Hypercube Families}

\author[1]{Debra Boutin}
\author[2]{Sally Cockburn}
\author[3]{Lauren Keough}
\author[4]{Sarah Loeb}
\author[5]{Puck Rombach}
\affil[1]{\url{dboutin@hamilton.edu}, Hamilton College, Clinton, NY}
\affil[2]{\url{scockbur@hamilton.edu}, Hamilton College, Clinton, NY}
\affil[3]{\url{keoulaur@gvsu.edu}, Grand Valley State University, Allendale Charter Township, MI}
\affil[4]{\url{sloeb@hsc.edu}, Hampden-Sydney College, Hampden-Sydney, VA}
\affil[5]{\url{puck.rombach@uvm.edu}, University of Vermont, Burlington, VT}
\renewcommand\Affilfont{\footnotesize}

\large
\date{\today}

\maketitle

\begin{abstract}  In this paper we study the symmetry parameters determining number, distinguishing number, and cost of $2$-distinguishing, for some variations on hypercubes, namely Hamming graphs, powers of hypercubes, folded hypercubes, enhanced hypercubes, augmented hypercubes and locally twisted hypercubes. 
\par \vspace{\baselineskip}

\keywords{Hypercubes, Distinguishing Number, Determining Number}

\MSC{05C15, 05C25, 05C69}
\end{abstract}

\section{Introduction}

Hypercubes are well-studied graphs that have a number of properties that make them useful as a communications network topology, such as regularity, symmetry and  connectivity. To improve network performance, some variations on hypercubes have been proposed and studied, including Hamming graphs, powers of hypercubes,  folded hypercubes, enhanced hypercubes, augmented hypercubes and locally twisted hypercubes. In this paper, we focus on the symmetry of these graphs.  

The \emph{hypercube} $Q_n$ has vertex set $V(Q_n) = \mathbb Z_2^n = \{\V= v_1 \dots v_n \mid v_i \in \{0, 1\}\}$, with two vertices adjacent if they differ in exactly one position. Some variations on the hypercube, such as the folded hypercube $FQ_n$, enhanced hypercube $Q_{n,k}$, and augmented hypercubes $AQ_n$, have additional conditions for  two vertices being adjacent. Alternatively, $Q_n$ is the Cartesian product of $K_2$ with itself $n$ times, denoted $K_2^{\Box n}$. Hamming graphs $H(m,n)$ are a variation on the base graph, that is $H(m,n) = K_m^{\Box n}$. Formal definitions of each variation are given in their respective sections.

One way of measuring the symmetry of a graph is to examine how interchangeable its vertices are, in the following sense. An automorphism of a graph $G$ is a permutation of the vertices that preserves adjacency and non-adjacency. We let $\aut(G)$ denote the group of all automorphisms of $G$. 
A graph $G$ is {\it vertex-transitive}  if for any pair of vertices $u, v \in V(G)$, there is an automorphism $\sigma \in \aut(G)$ such that $\sigma(u) = v$. One can also define edge-transitive, arc-transitive, and distance-transitive graphs; see~\cite{Bi1993} for precise definitions. 

 Edge-transitive graphs need not be vertex-transitive, but arc-transitive graphs are always vertex-transitive and edge-transitive. Distance-transitive graphs are arc-transitive. Table~\ref{tab:trans} summarizes the types of transitivity of the various families of hypercubes that are the subject of this paper, for $n\ge 3$. The entries without citations follow from the other entries.

\begin{table}\label{tab:trans}
\begin{tabularx}{\textwidth}{|X|X|X|X|X|}
\hline
Family of hypercubes ($G$) &Vertex-Transitive &Edge-Transitive  &Arc-Transitive  &Distance-Transitive\\
\hline
\hline
Hypercubes $(Q_n)$ &Yes &Yes &Yes &Yes~\cite{Bi1993}  \\
\hline
Squares of Hypercubes $(Q_n^2)$ &Yes &Yes &Yes &Yes~\cite{Mi2021}\\
\hline
Hamming Graphs $(H_{m,n})$ &Yes &Yes &Yes &Yes~\cite{BrCoNe1989}\\
\hline
Folded Hypercubes $(FQ_n)$ &Yes &Yes &Yes 
&Yes~\cite{va2007} \\
\hline
Enhanced Hypercubes $(Q_{n,k})$ &Yes ~\cite{TzWe1991} &No ~\cite{TzWe1991} &No &No\\
\hline
Augmented Hypercubes $(AQ_n)$ &Yes~\cite{ChSu2002} &No (Prop.~\ref{prop:AugmentedTransitive}) &No (Prop.~\ref{prop:AugmentedTransitive}) &No \\
\hline
Locally Twisted Hypercubes $(LQ_n)$ &No~\cite{ChMaYa2021} &No~\cite{ChMaYa2021} &No &No\\
\hline
\end{tabularx}
\caption{Summary of 
the transitivity of various hypercube families ($n \ge 3$).}
\end{table}

Another way to measure the symmetry of a graph is to quantify what it takes to block nontrivial automorphisms. For example, we could color the vertices and require that automorphisms preserve vertex color as well as adjacency and nonadjacency. More precisely, a graph $G$ is $d$-distinguishable if there is a coloring of the vertices with colors $\{1,\ldots, d\}$ so that the only automorphism that preserves the color classes is the identity.
The {\it distinguishing number} of $G$, denoted $\dist(G)$, is the minimum $d$ such that $G$ is $d$-distinguishable. 
Another strategy is to require that automorphisms fix a subset of vertices. A set $S$  of vertices is a {\it determining set} for a graph $G$ if the only automorphism that fixes all vertices in $S$ is the identity.
The {\it determining number} of $G$, denoted $\det(G)$, is the minimum size of a determining set.
To be a distinguishing coloring, the vertices in a color class need only be fixed setwise to imply an automorphism is trivial. However, to be a determining set, the vertices must be fixed pointwise. 
One property of these parameters is that for any graph $G$,
 $\det(G) = \det(\overline G)$ and $\dist(G) = \dist(\overline G)$, where $\overline G$ denotes the complement of $G$. 
For a discussion of other elementary properties of determining numbers and distinguishing numbers, as well as the connections between them, see~\cite{AlBo2007}. 

In~\cite{BoCo2004}, the authors show that hypercubes have distinguishing number $2$ for $n\ge 4$. In fact, for a large number of graph families, all but a finite number have distinguishing number $2$. Other examples are Cartesian powers $G^{\Box n}$ of a connected graph where $G\ne K_2,K_3$ and $n\geq 2$~\cite{Al2005, ImKl2006,KlZh2007}, and Kneser graphs $K_{n:k}$ with $n\geq 6, k\geq 2$~\cite{AlBo2007}. So, in 2007, Imrich~\cite{IW} asked about refining the concept of distinguishing to provide more information about the degree of symmetry of a graph. In response, Boutin defined the \emph{cost of 2-distinguishing} a 2-distinguishable graph $G$, denoted $\rho(G)$, to be the minimum size of a color class over all 2-distinguishing colorings~\cite{Bo2008}.

In this paper we discuss the determining number, distinguishing number, and, when relevant, the cost of a number of hypercube families. Our results and our paper organization are summarized in Table~\ref{tab:OurResults}. Note that $S(r,m)$ is the number of ways to partition $[r]$ into $m$ nonempty, unlabeled parts, and throughout the paper we use $\lg n$ to mean $\log_2 n$. While the table applies for $n\ge 6$, many of the results hold for smaller $n$, and the numbers are often determined for all $n$; see the respective sections. 
 Though the distinguishing number of the general Hamming graph is unknown, much work has been done to determine under which conditions a Hamming graph is $2$-distinguishable, which allows us to compute the cost in many cases. For folded hypercubes, the determining number is one of two options, and we describe in Theorem~\ref{thm:DetFQn} the values of $n$ that give each of the options. 
 
  With the exception of Section~\ref{sec:hamming} all vertices are written as binary vectors because of the way cubes are constructed. Thus, we use bold letters to denote the vertices. That is, each vertex is denoted as $\V = v_1v_2\dots v_n$ where $v_i\in \{0,1\}$ for all $i$. Each $v_i$ is referred to as a bit, and the index $i$ is the bit's position.

\begin{table}
\begin{tabularx}{\textwidth}{|c|X|X|l|X|}
    \hline
    Section &Family of hypercubes ($G$) &$\det(G)$ &$\dist(G)$ &$\rho(G)$  \\
     \hline
     \hline
    ~\ref{sec:hypercubes} & Hypercubes ($Q_n$) &$\lceil \lg n\rceil + 1$~\cite{Bo2009a}  &2~\cite{BoCo2004} &$1+\lceil \lg n\rceil$ or $2+\lceil \lg n\rceil$~\cite{Bo2021}\\
     \hline
    ~\ref{sec:powers} &Odd Powers of Hypercubes ($Q_n^k$, $k\le n-2$) &$\lceil \lg n\rceil + 1$~\cite{Bo2009a,MiPe1994}  &2~\cite{BoCo2004, MiPe1994} &$1+\lceil \lg n\rceil$ or $2+\lceil \lg n\rceil$~\cite{Bo2021, MiPe1994}\\
     \hline
    ~\ref{sec:powers} &Even Powers of Hypercubes $(Q_n^k$, $k\le n-2$) &$\le n$ \bf{(\ref{thm:distdetpowers})} &$2$ \bf{(\ref{thm:distdetpowers})} &$\le n+1$ \bf{(\ref{thm:distdetpowers})}\\
     \hline
    ~\ref{sec:hamming} &Hamming Graphs $(H_{m,n})$ &smallest $r$ such that $n\le S(r,m)+S(r,m-1)$ \bf{(\ref{thm:DetHam})} &unknown &$\det(H(m,n))$ or $\det(H(m,n))+1$ \bf{(\ref{thm:CostHam})}\\
     \hline
   ~\ref{sec:folded} &Folded Hypercubes $(FQ_n)$ &$\lceil\lg(n+1)\rceil+1$ or $\lceil\lg(n)\rceil+2$ \bf{(\ref{thm:DetFQn})} &$2$ \bf{(\ref{thm:distFQn})} &$O(n\lg n)$ {\bf (\ref{cor:costfolded})}\\
     \hline
    ~\ref{sec:enhanced} &Enhanced Hypercubes $(Q_{n,k})$ &$\max\{\det(Q_{k-1}), $ $\det(FQ_{n-k+1}) \}$ \bf{(\ref{prop:Qnkdet})} &$2$ \bf{(\ref{thm:distenhanced})}  &unknown\\
     \hline
    ~\ref{sec:augmented} &Augmented Hypercubes $(AQ_n)$ &$2$\bf{(\ref{thm:DetAQ2})} &$2$~\cite{Cha2008} &$3$ \bf{(\ref{thm:costAQ})}\\
     \hline
    ~\ref{sec:locallytwisted} &Locally Twisted Hypercubes $(LQ_n)$ &$1$ \bf{(\ref{prop:LTQ})} &$2$ \bf{(\ref{prop:LTQ})} &$1$ \bf{(\ref{prop:LTQ})}\\
     \hline
\end{tabularx}
\caption{Summary of results for $n\ge 6$. Boldface numbers in parentheses refer to the theorem number in this paper.} \label{tab:OurResults}
\end{table}

\section{Hypercubes}\label{sec:hypercubes}

The hypercube $Q_n$ has vertex set $V(Q_n) = \mathbb Z_2^n = \{\V= v_1 \dots v_n \mid v_i \in \{0, 1\}\}$, with two vertices adjacent if they differ in exactly one position (equivalently, if they have Hamming distance 1). In~\cite{Harary00}, Harary identified the automorphism group of $Q_n$.  For any $\c \in \mathbb Z_2^n$, let $\rho_\c$ denote translation by $\c$, given by $\rho_\c(\V) = \c + \V$, where addition is modulo 2. For any $\phi$ in the symmetric group $S_n$, we let $\phi$ act on vertices by permuting positions; $\phi(v_1 \dots v_n) = v_{\phi(1)} \dots v_{\phi(n)}.$ These translations and permutations are both automorphisms of $Q_n$. Moreover, the translations constitute a normal subgroup of $\aut(Q_n)$, the only automorpism group that is both a permutation and a translation is the identity, and every automorphism can be written as a composition $\rho_\c \circ \phi$. Thus, $\aut(Q_n)$ is the semidirect product $\mathbb Z_2^n \rtimes S_n$.

The hypercube is often studied as the Cartesian product of $n$ copies of $K_2$. As shown in~\cite{Bo2009a}, a useful way to study a potential determining set of a Cartesian product is through a {\it characteristic matrix}.

Recall that a graph $G$ is said to be \emph{prime with respect to the Cartesian product} if it cannot be written as a Cartesian product of two or more graphs. Let $G=H_1\Box \cdots \Box H_n$, be a Cartesian product of graphs where each $H_i$ is prime with respect to the Cartesian product.  Each vertex of $G$ can be written uniquely as a sequence of elements of the graphs $H_1, \ldots, H_n$.  That is $\V \in V(G)$ can be written uniquely as $v_1\cdots v_n$ with each $v_i\in V(H_i)$.

\begin{defn} Let $G=H_1\Box \cdots \Box H_n$. Let $S=\{\V_1,\ldots, \V_r\}$ be an ordered subset of $V(G)$.  Define the {\em characteristic matrix} of $S$, denoted $X(S)$ or simply $X$, to be the $r\times n$ matrix whose $ij^{th}$ entry is the $j^{th}$ position of $\V_i$, denoted $v_{ij}$. Two columns of $X$ are called \emph{isomorphic} if there exists an isomorphism $\varphi:H_i \to H_j$ so that for each $k\in [r], \varphi(v_{ki})=v_{kj}$, and \emph{nonisomorphic} otherwise.\end{defn}

In~\cite{Bo2009a} Boutin proves a criterion for a set to be a determining set and uses this result to establish the determining number of $Q_n$.  These are stated as Theorems~\ref{thm:CharMatrix} and ~\ref{thm:DetQn}. In~\cite{BoCo2004}, Bogstad and Cowen prove that $\dist(Q_n)=2$ for $n\geq 4$.  This result is stated in Theorem~\ref{thm:distQn}. Finally in~\cite{Bo2021}, Boutin uses characteristic matrices again to find the cost of 2-distinguishing hypercubes. This results is stated in   Theorem~\ref{thm:costQn}.

 \begin{thm} \label{thm:CharMatrix}\cite{Bo2009a} A set of vertices $S\subseteq V(G)$ is a determining set for $G$ if and only if the set of entries in each column of the characteristic matrix $X$ contains a determining set for its appropriate factor of $G$, and  no two columns of $X$ are isomorphic.\end{thm}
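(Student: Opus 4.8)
The plan is to prove both directions by invoking the structure theorem for automorphisms of connected Cartesian products of prime graphs (Imrich and Sabidussi). Since $G = H_1 \Box \cdots \Box H_n$ has each $H_i$ prime and connected, every $\sigma \in \aut(G)$ is determined by a permutation $\pi \in S_n$ that permutes only mutually isomorphic factors, together with a family of isomorphisms $\varphi_i : H_i \to H_{\pi(i)}$, acting coordinatewise so that the $\pi(i)^{th}$ coordinate of $\sigma(\V)$ equals $\varphi_i(v_i)$. With this decomposition in hand, the whole theorem becomes a dictionary between the two stated conditions on $X$ and the two kinds of generators of $\aut(G)$: factor automorphisms and factor-permuting isomorphisms.

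For the forward direction I would argue by contrapositive, showing that if either condition fails then $S$ is not determining. If some column $i$ fails to contain a determining set for $H_i$, then there is a nontrivial $\tau_i \in \aut(H_i)$ fixing every entry $v_{ki}$ of column $i$; extending $\tau_i$ by the identity on all other coordinates yields a nontrivial automorphism of $G$ that fixes every $\V_k \in S$, so $S$ is not determining. If instead two distinct columns $i$ and $j$ are isomorphic via $\varphi : H_i \to H_j$ with $\varphi(v_{ki}) = v_{kj}$ for all $k$, then the \emph{twisted transposition} swapping coordinates $i$ and $j$ through $\varphi$ and $\varphi^{-1}$ (and the identity elsewhere) is a nontrivial automorphism of $G$; the defining equations $\varphi(v_{ki}) = v_{kj}$ are exactly what is needed to verify that it fixes each $\V_k$, so again $S$ is not determining.

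For the reverse direction I would take an arbitrary $\sigma \in \aut(G)$ fixing $S$ pointwise and feed it through the decomposition $(\pi, (\varphi_i))$. Fixing each $\V_k$ forces $\varphi_i(v_{ki}) = v_{k,\pi(i)}$ for every $k$ and $i$, which says precisely that $\varphi_i$ carries the entries of column $i$ to the entries of column $\pi(i)$, i.e.\ these two columns are isomorphic. The no-two-columns-isomorphic hypothesis then forces $\pi(i) = i$ for all $i$, so $\pi$ is trivial and each $\varphi_i$ is an automorphism of $H_i$ fixing every entry of column $i$. Since column $i$ contains a determining set for $H_i$, each $\varphi_i$ must be the identity, whence $\sigma = \mathrm{id}$ and $S$ is determining.

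The main obstacle is not any single computation but the correct invocation and bookkeeping of the product-automorphism structure theorem. I must verify carefully that the swap map in the forward direction really is an automorphism of $G$ (this is exactly where primeness and the isomorphism $\varphi$ enter), and that the indexing of $\pi$ and the $\varphi_i$ in the reverse direction lines up so that the statement that $\sigma$ fixes column entries translates cleanly into the statement that columns $i$ and $\pi(i)$ are isomorphic. Once the decomposition is applied correctly, both implications follow by matching the two conditions on $X$ against the two types of generators.
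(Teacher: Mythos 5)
The paper itself gives no proof of Theorem~\ref{thm:CharMatrix}---it is quoted from~\cite{Bo2009a}---and your argument is correct and is essentially the proof given in that source: Boutin likewise invokes the structure theorem for automorphisms of connected Cartesian products of prime graphs (every automorphism is a permutation $\pi$ of isomorphic factors together with coordinatewise isomorphisms $\varphi_i\colon H_i\to H_{\pi(i)}$). Both halves of your argument check out: coordinatewise extension of a nontrivial factor automorphism and the twisted swap through $\varphi$ handle the contrapositive of the forward direction, and in the reverse direction fixing $S$ pointwise forces $\varphi_i(v_{ki})=v_{k,\pi(i)}$, so the no-isomorphic-columns hypothesis gives $\pi=\mathrm{id}$ and the column-determining-set hypothesis gives $\varphi_i=\mathrm{id}$, whence $\sigma$ is the identity.
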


\begin{thm}\cite{Bo2009a}\label{thm:DetQn} If $n\ge 1$, then $\det(Q_n)= \left\lceil {\lg n} \right\rceil+1$.  \end{thm}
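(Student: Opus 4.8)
The plan is to apply Theorem~\ref{thm:CharMatrix} to the Cartesian factorization $Q_n = K_2^{\Box n}$, in which every prime factor is $K_2$, and thereby reduce the problem to a counting statement about binary matrices. First I would record the two facts I need about the factor $K_2$: its automorphism group is generated by the single swap of its two vertices, so fixing either vertex blocks that swap and hence $\det(K_2)=1$; and the only isomorphisms $K_2 \to K_2$ are the identity and the swap. The first fact means that the set of entries of any nonempty column already contains a determining set for $K_2$, so the first hypothesis of Theorem~\ref{thm:CharMatrix} holds automatically for every ordered set $S$ (each column of the characteristic matrix has $r \ge 1$ entries). The content therefore lies entirely in the second hypothesis.

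Next I would unpack the ``no two columns isomorphic'' condition. Writing the characteristic matrix $X$ of an ordered set $S$ of $r$ vertices as an $r\times n$ matrix over $\{0,1\}$, two columns are isomorphic precisely when some isomorphism of $K_2$ carries one to the other entrywise; by the description of those isomorphisms, this happens exactly when the two columns are equal (via the identity) or bitwise complementary (via the swap). Thus Theorem~\ref{thm:CharMatrix} says that $S$ is a determining set for $Q_n$ if and only if the $n$ columns of $X$ are pairwise distinct and pairwise non-complementary vectors in $\{0,1\}^r$.

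This turns the theorem into an optimization: find the least $r$ admitting $n$ such columns. The $2^r$ vectors of length $r$ partition into $2^{r-1}$ complementary pairs, and any family that is pairwise distinct and pairwise non-complementary can use at most one vector from each pair, so it has at most $2^{r-1}$ members. Hence a determining set of size $r$ exists if and only if $n \le 2^{r-1}$, equivalently $r \ge \lceil \lg n\rceil + 1$, which gives the lower bound. For the matching upper bound I would set $r = \lceil \lg n\rceil + 1$, note that then $2^{r-1}\ge n$, and exhibit an explicit valid choice --- for instance $n$ distinct length-$r$ vectors all having first coordinate $0$, which are forced to be pairwise non-complementary --- so the corresponding $r$ rows form a determining set.

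The main obstacle is conceptual rather than computational: correctly translating the abstract ``columns isomorphic'' hypothesis into the equal-or-complementary dichotomy special to $K_2$. Once that translation is secured, the remaining work is the elementary observation that complementation pairs up the binary vectors and so halves the number available, together with a check of the small cases $n=1,2$ to confirm the ceiling in the formula behaves as stated.
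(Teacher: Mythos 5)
Your proof is correct and takes essentially the approach the paper attributes to \cite{Bo2009a}: apply Theorem~\ref{thm:CharMatrix} to $Q_n=K_2^{\Box n}$, observe that two columns are isomorphic exactly when they are equal or bitwise complementary, and count that at most $2^{r-1}$ pairwise non-isomorphic columns exist; this is precisely the $m=2$ case of the counting in the paper's Theorem~\ref{thm:DetHam}, since $S(r,2)+S(r,1)=2^{r-1}$. The only detail left implicit is that your $n$ chosen columns force the $r$ rows of the matrix to be distinct vertices (if two rows agreed, all $n$ columns would lie in a set of only $2^{r-2}<n$ vectors), so the rows genuinely form an $r$-element set to which Theorem~\ref{thm:CharMatrix} applies.
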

 
\begin{thm}\label{thm:distQn}\cite{BoCo2004} If $n\geq 4$, then $\dist(Q_n)=2$.  Also, $\dist(Q_2)=\dist(Q_3)=3$.\end{thm}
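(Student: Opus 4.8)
The plan is to prove the two claims separately, using throughout the structure $\aut(Q_n)=\mathbb Z_2^n\rtimes S_n$ recorded above, so that every automorphism is uniquely $\rho_\c\circ\phi$ with $\c\in\mathbb Z_2^n$ and $\phi\in S_n$. A $2$-coloring is distinguishing exactly when the set $S$ of vertices receiving the first color is stabilized \emph{setwise} only by the identity, so the whole task reduces to exhibiting one such $S$. The lower bound $\dist(Q_n)\ge 2$ is immediate for $n\ge 2$: a single color is preserved by every automorphism, and $Q_n$ has nontrivial automorphisms (e.g.\ any transposition of two positions), so one color never suffices. The two exceptional cubes $Q_2\cong C_4$ and $Q_3$ I would treat by a short finite analysis showing that every $2$-coloring admits a nontrivial color-preserving automorphism, together with an explicit distinguishing $3$-coloring, giving $\dist(Q_2)=\dist(Q_3)=3$.

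For the upper bound when $n\ge 4$, the key observation is that $\phi(\0)=\0$, so any $\sigma=\rho_\c\circ\phi$ sends $\0\mapsto\c$. Hence if I force $\0\in S$ and arrange that $\0$ is the only vector of Hamming weight $0$ in $S$ while every other element of $S$ has weight exactly $2$, then $\sigma(S)=S$ forces $\c=\sigma(\0)\in S$ to have weight $0$ or $2$. I therefore take
$$S=\{\0\}\cup\{\e_i+\e_j:\{i,j\}\in E(H)\},$$
where $\e_i$ is the weight-$1$ vector with its $1$ in position $i$, and $H$ is a graph on the position set $[n]$ to be chosen. To rule out a weight-$2$ translation, suppose $\c=\e_a+\e_b$. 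For each weight-$2$ element $\s\in S$, the vector $\phi(\s)$ again has weight $2$, and $\c+\phi(\s)\in S$ must have weight $0$ or $2$; a direct count shows this forces the support of $\phi(\s)$ to meet $\{a,b\}$. As $\s$ ranges over the weight-$2$ part of $S$, the supports of the $\phi(\s)$ are the edges of a relabeled copy of $H$, so $\{a,b\}$ would be a $2$-element vertex cover of that copy. Choosing $H$ to contain three pairwise-disjoint edges makes this impossible, forcing $\c=\0$.

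Once $\c=\0$ we have $\sigma=\phi$, and $\sigma(S)=S$ says precisely that $\phi$ induces an automorphism of $H$; choosing $H$ \emph{asymmetric} then forces $\phi=\mathrm{id}$. Since an asymmetric graph on $[n]$ possessing three pairwise-disjoint edges exists for every $n\ge 6$, this produces a $2$-distinguishing coloring for all $n\ge 6$. The cases $n\in\{4,5\}$ fall outside this construction because no asymmetric graph exists on fewer than six vertices; there I would instead exhibit explicit black sets that use vertices of several different weights, letting the richer weight profile play the role of graph rigidity, and verify directly that only the identity preserves them.

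The crux of the argument is breaking the coordinate-permutation part $\phi$ with only two colors. Translations are controlled cheaply by the weight bookkeeping around $\0$, but killing all of $S_n$ requires an intrinsically asymmetric pattern among the $n$ positions, which is exactly why the clean construction succeeds only once rigid graphs are available ($n\ge 6$) and why $n=4,5$ — and the genuinely smaller cubes $Q_2,Q_3$ — demand separate, ad hoc treatment. A secondary point is ensuring the translation and permutation steps do not interfere: the weight-profile trick simultaneously pins $\c=\0$ and leaves $\phi$ acting as an automorphism of $H$, so the two halves decouple cleanly provided $S$ is chosen with both the disjoint-matching and the rigidity properties.
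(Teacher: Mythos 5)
First, a point of comparison: the paper itself contains \emph{no} proof of this statement --- it is quoted from Bogstad and Cowen~\cite{BoCo2004} --- so your proposal can only be judged on its own terms. On those terms, your core construction for $n\ge 6$ is correct. Taking $S=\{\0\}\cup\{\e_i+\e_j : \{i,j\}\in E(H)\}$, the weight bookkeeping does pin the translation part: any setwise stabilizer $\rho_{\c}\circ\phi$ of $S$ sends $\0$ to $\c\in S$, so $\c$ has weight $0$ or $2$; if $\c=\e_a+\e_b$, then for every edge $\{i,j\}$ of $H$ the vector $\c+\phi(\e_i+\e_j)$ must have weight $0$ or $2$, forcing $\{a,b\}$ to be a $2$-element vertex cover of an isomorphic copy of $H$, which a $3$-matching in $H$ rules out; and once $\c=\0$, setwise invariance of $S$ says exactly that $\phi$ induces an automorphism of $H$, so asymmetry of $H$ kills $\phi$. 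The one unproven ingredient at this stage --- existence, for every $n\ge 6$, of an asymmetric graph on the vertex set $[n]$ containing three pairwise disjoint edges --- is true and standard, but a complete write-up should exhibit such a family (note that padding a $6$-vertex example with isolated vertices does \emph{not} work, since two isolated vertices can be swapped).

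The genuine gap is that what you have proved is not the theorem. The statement asserts $\dist(Q_n)=2$ for \emph{all} $n\ge 4$, together with $\dist(Q_2)=\dist(Q_3)=3$. Your construction fails precisely at $n=4,5$, as you acknowledge (no asymmetric graph exists on $4$ or $5$ vertices), and for those cases you only promise ``explicit black sets \dots verified directly'' without producing them. These are not removable edge cases: they are the boundary of the theorem --- $\dist(Q_3)=3$, so $n=4$ is exactly where the behavior changes --- and any honest proof must exhibit and verify a concrete $2$-distinguishing coloring of $Q_4$ and $Q_5$ (finite but nontrivial checks, since $|\aut(Q_4)|=384$ and $|\aut(Q_5)|=3840$). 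Likewise the second claim is entirely deferred: you give neither distinguishing $3$-colorings of $Q_2$ and $Q_3$ nor the argument that no $2$-coloring of them is distinguishing, and the latter is a real (if finite) argument, not a one-line observation. As written, the proposal establishes $\dist(Q_n)=2$ only for $n\ge 6$ and establishes neither exceptional value, so it proves a strictly weaker statement than the one in question.
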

 
\begin{thm}\label{thm:costQn}\cite{Bo2021} If $n\geq 5$, then $\rho(Q_n) \in \{1 + \lceil \lg n\rceil, 2 + \lceil \lg n\rceil\}$.  Further, there is a recurrence relation to determine which value the cost takes on for a given $n$.  Also, $\rho(Q_4)=5$. \end{thm}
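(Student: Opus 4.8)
The plan is to prove a matching pair of bounds and then refine the upper one by a recurrence. For the lower bound I would observe that any $2$-distinguishing color class $C$ is automatically a determining set: if an automorphism $\sigma$ fixes every vertex of $C$ pointwise, then it fixes $C$ setwise, so the distinguishing property forces $\sigma$ to be the identity. Hence $|C|\ge \det(Q_n)=\lceil\lg n\rceil+1$ by Theorem~\ref{thm:DetQn}, and minimizing over all $2$-distinguishing colorings gives $\rho(Q_n)\ge 1+\lceil\lg n\rceil$. Since $2^n$ is enormous compared to $\lg n$, the smaller color class is the one of size about $\lg n$, so the cost is realized by that class.

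Next I would translate the distinguishing condition into the language of characteristic matrices. Let $S$ be the small (black) class, with characteristic matrix $X$ whose $r$ rows are the black vertices and whose $n$ columns $C_1,\dots,C_n\in\Z_2^r$ record the positions. Writing an arbitrary automorphism as $\rho_\c\circ\phi$ and letting $\pi$ be the permutation it induces on the rows of $S$, a short computation from $(\rho_\c\circ\phi)(\V_i)_j = X_{i,\phi(j)}+c_j = X_{\pi(i),j}$ shows that $\rho_\c\circ\phi$ fixes $S$ setwise if and only if, for every $j$, the column $C_j$ permuted by $\pi$ equals $C_{\phi(j)}$, complemented exactly when $c_j=1$. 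Thus $S$ is $2$-distinguishing precisely when the column configuration admits no such nontrivial triple $(\pi,\phi,\c)$; call such a configuration \emph{rigid}. Because two columns are isomorphic in the sense of Theorem~\ref{thm:CharMatrix} exactly when they are equal or complementary, the $n$ columns must land in distinct classes among the $2^{r-1}$ classes of $\Z_2^r$ modulo complementation, which re-derives $r\ge\lceil\lg n\rceil+1$.

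For the upper bound $\rho(Q_n)\le 2+\lceil\lg n\rceil$ I would take $r=\lceil\lg n\rceil+2$ and build an explicit rigid configuration. With one row beyond the determining number there are $2^{r-1}\ge 2n$ available classes, leaving ample slack: I would select $n$ pairwise non-complementary columns and break all residual symmetry by designating asymmetric anchor rows (for example by giving the columns distinct prescribed weights or by fixing two rows that no nontrivial $\pi$ and no complementation can preserve). Verifying rigidity then amounts to checking that any $(\pi,\phi,\c)$ stabilizing the configuration must fix these anchors and hence be trivial, which I expect to be routine given the freedom afforded by the extra row.

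The delicate part, and the source of the promised recurrence, is to decide for each $n$ whether the determining number of rows $r=\lceil\lg n\rceil+1$ already supports a rigid configuration, or whether the extra row is genuinely required. This is a packing problem: among the $2^{r-1}$ complementation-classes, how large can $n$ be while still choosing columns that form a rigid $r$-row configuration? When $n$ sits well below $2^{r-1}$ there is room to install anchors and the cost is $r$; when $n$ approaches $2^{r-1}$ the near-forced use of all classes reintroduces symmetry and the cost rises to $r+1$. I would capture the exact threshold by a recurrence that records, as a function of $r$, the largest $n$ admitting a rigid configuration, reducing the $r$-row count to a smaller instance; this rigidity-packing analysis is the main obstacle, the two bounds above being comparatively direct. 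Finally, $Q_4$ lies outside this asymptotic regime, so I would settle $\rho(Q_4)=5$ by a finite check showing that no black set of size at most $4$ is rigid while an explicit $5$-element set is.
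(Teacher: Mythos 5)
This theorem is stated in the paper as a citation to \cite{Bo2021}; the paper itself contains no proof, so your proposal can only be measured against Boutin's published argument, whose framework you have in fact reconstructed in outline. Your lower bound is complete and correct: every color class of a $2$-distinguishing coloring is a determining set (an automorphism fixing one class pointwise preserves both classes setwise), so $\rho(Q_n)\ge \det(Q_n)=\lceil \lg n\rceil+1$ by Theorem~\ref{thm:DetQn}. Your translation of setwise stabilization into the column condition --- $C_{\phi(j)}$ equals the $\pi$-row-permuted $C_j$, complemented exactly when $c_j=1$ --- is also the right language, and is essentially how \cite{Bo2021} proceeds.

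The gap is that the two items constituting the actual content of the theorem, the universal upper bound $\lceil\lg n\rceil+2$ and the recurrence deciding between the two values, are left as promissory notes, and the concrete mechanisms you suggest for the upper bound cannot work as stated. Giving the $n$ columns ``distinct prescribed weights'' is impossible by pigeonhole: with $r=\lceil\lg n\rceil+2$ rows there are only $r+1=O(\lg n)$ available weights for $n$ columns, and weight is not even invariant under the allowed moves, since taking $c_j=1$ replaces a column of weight $w$ by one of weight $r-w$. ``Fixing two rows'' constrains nothing a priori, because the row permutation $\pi$ is induced by the automorphism rather than chosen; showing that some explicit $r\times n$ matrix admits no nontrivial triple $(\pi,\phi,\mathbf{c})$ is precisely the hard combinatorial core of Boutin's paper, not a routine check. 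As for the dichotomy, you assert that a threshold exists and guess its qualitative location, which restates the theorem rather than proving it. Your heuristic is verifiably right only at the extreme: if $n=2^{r-1}$ with $r=\lceil\lg n\rceil+1$, then every admissible matrix uses all $2^{r-1}$ complementation classes, so every row permutation $\pi$ induces a pair $(\phi,\mathbf{c})$ giving a nontrivial setwise stabilizer, forcing $\rho(Q_n)=\lceil\lg n\rceil+2$ for powers of two; but for general $n$ in the block $2^{r-2}<n<2^{r-1}$ the existence question is delicate, and settling it is exactly what the recurrence in \cite{Bo2021} does. In sum, as written the proposal establishes only the lower bound.
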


\section{Powers of Hypercubes}\label{sec:powers}

Given a graph $G$, the $k\th$ power of $G$, denoted $G^k$, has vertex set $V(G)$ with an edge between vertices $u$ and $v$ if and only if $d_G(u,v)\leq k$, where $d_G(u,v)$ denotes the distance between $u$ and $v$ in $G$.

By~\cite{MiPe1994}, for all  $1\le k\le n-2$, $\aut(Q_n^k)=\aut(Q_n)$ if $k$ is odd and $\aut(Q_n^k) = \aut(Q_n^2)$ if $k$ is even. 
These automorphism groups are equal in the sense that they are the same as permutations on their (identical) vertex sets. So, the pointwise and setwise stabilizers of vertex subsets are also equal. Thus, for $1\le k\le n-2$, when $k$ is odd $Q_n^k$ and $Q_n$ have precisely the same determining sets and distinguishing colorings, and the same is true for $Q_n^k$ and $Q_n^2$ when $k$ is even. 

Thus, when $k\le n-2$ and $k$ is odd, $\det(Q^k_n) = \det(Q_n) = 1+\lceil \lg n \rceil$ by~\cite{Bo2009a}. By~\cite{BoCo2004}, $\dist(Q_2)=\dist(Q_3)=3$ and $\dist(Q_n)=2$ for $n\ge 4$. 
 For $n\geq 4$ and odd $k\le n-2$ exact values for $\rho(Q_n^k) = \rho(Q_n)$ can be found recursively; see~\cite{Bo2021}.
  
To complete our study of determining and distinguishing $Q_n^k$ we need only study $Q_n^2$. Let $h(\x,\y)$ be the distance in $Q_n$ (which is the Hamming distance), and $d(\x,\y)$ be the distance in $Q_n^2$.  Note that to find a shortest path from $\x$ to $\y$ in $Q_n^2$, it is sufficient to begin with a shortest path in $Q_n$ and replace pairs of consecutive edges in $Q_n$, say $x_{i-1} x_i x_{i+1}$, with the edge $x_{i-1} x_{i+1}$ in $Q_n^2$.  Therefore, $d(\x,\y) = \left\lceil \frac{h(\x,\y)}{2}\right\rceil$. 

Lemma~\ref{lem:inducedsg} applies to all graphs, but we will use it specifically in the proofs of Theorem~\ref{thm:distdetpowers} and Theorem~\ref{thm:distFQn}.

\begin{lemma} \label{lem:inducedsg}  Let $T$ be a determining set for graph $G$.  If the subgraph induced by $T$, $G[T]$ is asymmetric then  $G$ is $2$-distinguishable and $T$ is a color class in a $2$-distinguishing coloring. \end{lemma}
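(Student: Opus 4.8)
The plan is to use the determining set $T$ directly as one of the two color classes. Concretely, I would color every vertex in $T$ with color $1$ and every vertex in $V(G)\setminus T$ with color $2$, and then argue that the only color-preserving automorphism is the identity. So suppose $\sigma \in \aut(G)$ preserves the two color classes. Since $\sigma$ fixes the class $T$ setwise, $\sigma$ restricts to an automorphism of the induced subgraph $G[T]$.

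The key observation is that a color-preserving automorphism cannot mix the two classes, so $\sigma(T)=T$ as a set, and therefore $\sigma|_T$ is a well-defined automorphism of $G[T]$. By hypothesis $G[T]$ is asymmetric, meaning its only automorphism is the identity; hence $\sigma$ must fix every vertex of $T$ pointwise. At this point I would invoke the definition of a determining set: since $T$ is a determining set for $G$ and $\sigma$ fixes all of $T$ pointwise, $\sigma$ must be the identity automorphism of $G$. This shows the $2$-coloring is distinguishing, so $G$ is $2$-distinguishable and $T$ is a color class in a $2$-distinguishing coloring, exactly as claimed.

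The only subtlety worth checking carefully is the step that $\sigma$ maps $G[T]$ to itself as a graph automorphism — that is, that $\sigma|_T$ is genuinely an automorphism of the induced subgraph and not merely a setwise-stabilizing permutation. This is immediate because $\sigma$ is an automorphism of $G$ and preserves adjacency and non-adjacency among all vertices, so in particular it preserves adjacency and non-adjacency among vertices of $T$; restricting a bijection that preserves the induced edge relation gives an automorphism of $G[T]$. I do not expect any real obstacle here: the argument is short and essentially definitional, chaining together the asymmetry of $G[T]$ (to get pointwise fixing of $T$) with the determining-set property of $T$ (to get triviality on all of $G$). The main thing to state cleanly is the reduction from ``preserves color classes'' to ``fixes $T$ setwise, hence acts as an automorphism of $G[T]$.''
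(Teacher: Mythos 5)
Your proposal is correct and follows essentially the same argument as the paper: take $T$ as one color class, note that a color-preserving automorphism stabilizes $T$ setwise and hence restricts to an automorphism of $G[T]$, use asymmetry to get pointwise fixing of $T$, and then invoke the determining-set property to conclude the automorphism is the identity. No gaps; your extra remark about why the restriction is genuinely an automorphism of the induced subgraph is the same observation the paper dismisses as ``immediate.''
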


\begin{proof} Suppose we have a determining set $T$ for $G$ with the property that $G[T]$ is asymmetric. Suppose $\varphi \in \aut(G)$ preserves $T$ setwise. It is immediate that restricting $\varphi$ to $G[T]$ is an automorphism. Since $G[T]$ is asymmetric, the restricted $\varphi$ acts trivially on $T$. 
However, since $T$ is a determining set, any automorphism that fixes $T$ pointwise is trivial. We conclude that $\varphi$ is the identity. Thus, $T$ is a color class in a $2$-distinguishing coloring and therefore $G$ is $2$-distinguishable. \end{proof}

\begin{defn}  Let $\U_0 = \0$.  For $i\in[n]$ define $\U_i$ to have 1's in positions $1,\ldots, i$ and 0's elsewhere. \end{defn}

\noindent{\bf Observations:}

\begin{enumerate}  

    \item \label{obs:DifferInIth} For $1\le i \le n$, $\U_{i-1}$ and $\U_i$ differ only by the 1 in the $i\th$ position of $\U_i$. So,
    
        \begin{enumerate}
    
        \item \label{obs:HammingDiff} for any $\x=x_1\ldots x_n\in V(Q_n^2)$, $h(\U_i,\x)- h(\U_{i-1}, \x)\in \{\pm 1\}$
    
        \item \label{obs:ParityAlt} the parity of $h(\U_i,\x)$ is distinct from  the parity of $h(\U_{i-1},\x)$ 
    
        \item \label{obs:FindXi} $x_i=1$ if and only if $h(\U_i,\x)<h(\U_{i-1},\x)$ 
        
        \item \label{obs:GraphDistXi} $d(\U_i,\x) < d(\U_{i-1},\x)$ if and only if $x_i=1$.
    
        \end{enumerate}
        
    \item \label{obs:GraphDist} For any two vertices $\x$ and $\y$, $h(\x,\y)$ is even if and only if $h(\x,\y)= 2d(\x,\y)$, while $h(\x,\y)$ is odd  if and only if $h(\x,\y)=2d(\x,\y)-1$.
   
    \end{enumerate}

\begin{thm}\label{thm:distdetpowers} For $n>3$, $\det(Q_n^2) \leq n$, $\dist(Q_n^2)=2$, and $\rho(Q_n^2)\leq n+1$. \end{thm}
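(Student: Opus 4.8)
The plan is to split the three bounds into two stages: first exhibit a determining set of size $n$, then enlarge it by a single vertex into a determining set whose induced subgraph is asymmetric, so that Lemma~\ref{lem:inducedsg} yields the distinguishing and cost bounds simultaneously. For $\det(Q_n^2)\le n$, I would show that $T_0=\{\U_1,\dots,\U_n\}$ is a determining set. Suppose $\varphi\in\aut(Q_n^2)$ fixes each $\U_i$ pointwise. Since automorphisms are isometries, $d(\U_i,\varphi(\x))=d(\U_i,\x)$ for every $\x$ and every $i$, so by Observation~\ref{obs:GraphDistXi} the vertices $\x$ and $\varphi(\x)$ agree in position $i$ whenever $\U_{i-1}$ and $\U_i$ are both fixed, i.e.\ in positions $2,\dots,n$. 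Applying this to $\0$ gives $\varphi(\0)\in\{\0,\U_1\}$, and since $\varphi(\U_1)=\U_1$ and $\varphi$ is injective we get $\varphi(\0)=\U_0=\0$. Now $\U_0$ is fixed as well, so Observation~\ref{obs:GraphDistXi} with $i=1$ also pins down the first bit, forcing $\varphi=\mathrm{id}$. Hence $T_0$ is determining and $\det(Q_n^2)\le n$.

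The natural hope is to feed $T_0$ straight into Lemma~\ref{lem:inducedsg}, but this fails. Since $h(\U_i,\U_j)=|i-j|$, the induced subgraph $Q_n^2[T_0]$ is the path power $P_n^2$ (with $\U_i\sim\U_j$ iff $|i-j|\le 2$), which carries the reflection $\U_i\mapsto\U_{n+1-i}$. Worse, this reflection actually extends to $\aut(Q_n^2)$: reversing positions $2,\dots,n$ and then translating by $\1+\e_1$ is an automorphism that sends $\U_i\mapsto\U_{n+1-i}$ and stabilizes $T_0$ setwise. So $T_0$ is genuinely not rigid and cannot be a $2$-distinguishing color class; this is precisely why the cost bound is $n+1$ rather than $n$.

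To repair this I would adjoin a single vertex $\mathbf w$ attached to the path asymmetrically, taking $T=T_0\cup\{\mathbf w\}$ of size $n+1$; $T$ is still determining (it contains $T_0$), so it suffices to make $Q_n^2[T]$ asymmetric and invoke Lemma~\ref{lem:inducedsg}. The neighborhood $\{i:\mathbf w\sim\U_i\}$ is governed by the $\pm1$ walk $i\mapsto h(\mathbf w,\U_i)$ of Observation~\ref{obs:GraphDistXi}, and I would choose $\mathbf w$ so that this neighborhood is a short off-center block such as $\{\U_3,\U_4,\U_5\}$ (for instance $\mathbf w$ with ones exactly in positions $2,3,4$). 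Such a block is neither a prefix nor a suffix of the path, so it does not merely lengthen $P_n^2$ into another symmetric path, and one checks $\mathbf w$ is not a twin of any $\U_i$. I would then verify asymmetry by a degree/local-structure argument: the extra edges raise $\U_3,\U_4,\U_5$ to a distinctive degree and single out $\mathbf w$ among the degree-$3$ vertices by its number of high-degree neighbors, which fixes the orientation of the path and kills the reflection, after which the rigidity of $P_n^2$ forces the identity. With $Q_n^2[T]$ asymmetric, Lemma~\ref{lem:inducedsg} shows $Q_n^2$ is $2$-distinguishable with $T$ as a color class, so $\rho(Q_n^2)\le n+1$; combined with $\dist(Q_n^2)\ge 2$ (the graph is vertex-transitive, hence not rigid) this gives $\dist(Q_n^2)=2$.

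The main obstacle is this last step: choosing $\mathbf w$ that simultaneously breaks the reflection and introduces no new symmetry (in particular no twin of a path vertex), and confirming asymmetry uniformly in $n$. Small cases (e.g.\ $n=6,7$, where the chosen block sits near the center or the path ends) may require a slightly different $\mathbf w$ or a direct check.
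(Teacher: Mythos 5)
Your overall architecture matches the paper's: use the nested vertices $\U_i$ as a determining set of size $n$, then adjoin one extra vertex and invoke Lemma~\ref{lem:inducedsg} to get both $\dist(Q_n^2)=2$ and $\rho(Q_n^2)\le n+1$. However, your proof that $T_0=\{\U_1,\dots,\U_n\}$ is determining has a genuine gap: it reads Observation~\ref{obs:GraphDistXi} as a two-way equivalence, and that equivalence is false in one direction. If $x_i=1$ and $h(\U_{i-1},\x)=2m$ is even, then $h(\U_i,\x)=2m-1$ and both graph distances equal $m$, so equality of the two graph distances is compatible with either value of $x_i$. Concretely, $\x=\e_2$ and $\y=\0$ satisfy $d(\U_1,\x)=d(\U_1,\y)=1$ and $d(\U_2,\x)=d(\U_2,\y)=1$, yet they differ in position $2$; so fixing $\U_1$ and $\U_2$ does \emph{not} pin down the second bit, and your step ``$\x$ and $\varphi(\x)$ agree in positions $2,\dots,n$'' is unjustified (only the forward implication of Observation~\ref{obs:GraphDistXi} holds; the correct two-way criterion is the Hamming-distance version, Observation~\ref{obs:FindXi}). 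This ambiguity---what to do when consecutive graph distances are equal---is exactly where the paper's proof does its real work: in Case 1 it locates an index where consecutive distances differ, uses that to determine the \emph{parity} of the Hamming distances, reconstructs every $h(\U_i,\x)$ from the $d(\U_i,\x)$ via Observations~\ref{obs:ParityAlt} and~\ref{obs:GraphDist}, and only then extracts bits with Observation~\ref{obs:FindXi}; Case 2 treats the vertices for which all consecutive distances coincide by a separate weight/parity argument. Your conclusion that $T_0$ is determining is (essentially) true, but as written your argument does not establish it; some version of this parity bookkeeping is unavoidable.

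The second half is also incomplete, as you yourself flag: for your choice of $\mathbf{w}$ (ones in positions $2,3,4$, so $\mathbf{w}$ is adjacent to $\U_3,\U_4,\U_5$), the asymmetry of the induced subgraph is only sketched, would need a uniform-in-$n$ degree argument, and may fail for small $n$. The paper avoids this entirely by taking $\mathbf{w}$ with a $0$ in position $1$ and $1$'s elsewhere: then $h(\mathbf{w},\U_i)=1+(n-i)\ge 3$ for $i\le n-2$ and $h(\mathbf{w},\U_0)=n-1\ge 3$, so $\mathbf{w}$ is adjacent only to $\U_{n-1}$, and the induced graph is $P_n^2$ with a pendant edge at one end; the unique degree-one vertex must be fixed, which kills the reflection, so asymmetry is immediate. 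I recommend adopting that choice. On the positive side, your observation that $T_0$ alone cannot serve as a color class---because reversing positions $2,\dots,n$ and then translating by $\1+\e_1$ is an automorphism of $Q_n^2$ that reflects the path $\U_i\mapsto\U_{n+1-i}$---is correct (and is a point the paper does not make); it cleanly explains why this construction gives cost at most $n+1$ rather than $n$.
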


\begin{proof} Let $S=\{\U_0, \U_1,\ldots, \U_{n-1}\}$.  Since the induced subgraph of $S$ in $Q_n$ is isomorphic to $P_n$, the path on $n$ vertices, the induced subgraph of $S$ in $Q_n^2$ is isomorphic to $P_{n}^2$. If we are given only the vertices of $S$ and not their corresponding binary sequences, we can still infer their relative order from their induced subgraph, and without loss of generality, up to isomorphic image, which vertex is which. So we can assume we are given $S$ as the ordered set of vertices of $S$, in order, and can compute the distance from any $\x\in V(Q_n^2)$ to each of the $\U_i$. 

Let $\x\in Q_n^2$.

{\bf Case 1)} Suppose there exists $i\in[2,n-1]$ so that $d(\U_{i-1},\x)\ne d(\U_i,\x)$. 

As shown in Observation~\ref{obs:HammingDiff}, $h(\U_i,\x) - h(\U_{i-1},\x) \in \{\pm 1\}$.  So in general, $d(\U_{i}, \x) - d(\U_{i-1},\x)\in\{-1,0,1\}$.  In this case, since $d(\U_{i-1},\x)\ne d(\U_{i},\x)$, we have $d(\U_{i}, \x) - d(\U_{i-1},\x)\in\{-1,1\}$. Using that $d(\x,\y) = \left\lceil \frac{h(\x,\y)}{2}\right\rceil$, we can conclude that
$d(\U_{i-1},\x)< d(\U_{i},\x)$ if and only if $h(\U_{i-1},\x)$ is even, and
$d(\U_{i-1},\x) > d(\U_{i},\x)$ if and only if $h(\U_{i-1},\x)$ is odd.

Since we now know the parity of $h(\U_{i-1},\x)$, by Observations~\ref{obs:ParityAlt} and~\ref{obs:GraphDist} we can find the parity and values for each $h(\U_i,\x)$ for $i\in[0,n-1]$. Then we can use Observation~\ref{obs:FindXi} to find the first $n-1$ positions of $\x$.  Further, since we know the parity of $h(\U_0,\x)$, we can use our knowledge of $x_1, \dots, x_{n-1}$, to find $x_n$.

{\bf Case 2)} Suppose that for all $i\in[2,n-1]$, $d(\U_{i-1},\x) = d(\U_i,\x)$.

Since the parity of $h(\U_i,\x)$ alternates, but $d(\U_i,\x)$ remains the same, the Hamming distance alternates between an even value and the odd value that is one less.  Recall that by Observation~\ref{obs:FindXi}, $x_i=1$ if and only if $h(\U_i,\x)<h(\U_{i-1},\x)$. Thus, the only possibilities for  $\x$ are $\a=0101\cdots a_n \text{ or } \b=1010\cdots b_n$.

If $\x=\a$, the first positon is $a_1=0$, and so by Observation~\ref{obs:FindXi}, $h(\U_0,\a) < h(\U_1,\a)$. Further, the fact that $d(\U_0,\a) = d(\U_1,\a)$ tells us that $h(\U_0,\a)$ is odd. Together with $d(\U_0,\a)$ we can find the Hamming weight of $\a$ and use our knowledge of $a_1, \dots, a_{n-1}$ to compute $a_n$.

Similarly, if $\x=\b$,  the first position is $b_1=1$, and so by Observation~\ref{obs:FindXi}, $h(\U_0,\b) > h(\U_1,\b)$ and we use the fact that $d(\U_0,\b) = d(\U_1,\b)$ to conclude that $h(\U_0,\b)$ is even. Together with $d(\U_0,\b)$ we can find the Hamming weight of $\b$ and use our knowledge of $b_1,\dots, b_{n-1}$, we compute $b_n$.

Since each vertex of $Q_n^2$ can be uniquely determined by its relationship to the vertices of $S$, $S$ is a determining set for $Q_n^2$.

Let $T=S\cup \{\bf w\}$ where $\bf w$ has a 0 in position 1 and 1's everywhere else. Clearly, $\bf w$ differs in at least three positions from $\U_0$. Furthermore, $\bf w$ differs in the first and last position from every other vertex in the set $S$. With the exception of $\U_{n-1}$, it differs in at least one more position from every vertex in $S$. Therefore, $G[T]$ is isomorphic to $P_{n}^2$ with a pendant edge attached to one of the two vertices of degree 2. Thus, $G[T]$ is asymmetric and by Lemma~\ref{lem:inducedsg}, $T$ is a color class in a $2$-distinguishing coloring for $Q_n^2$ and $\dist(Q_n^2)=2$.
\end{proof}

When $k=n-1$, $Q_n^k$ is the complement of a matching, and when $k\ge n$, $Q_n^k = K_n$. Theorem~\ref{thm:distdetpowers} and results from~\cite{MiPe1994,Bo2021,BoCo2004} lead to Corollary~\ref{cor:powers}.
\begin{cor}\label{cor:powers}
Let $n\ge 4$ and $1\le k\le n-2$. For $k$ odd, \[\det(Q_n^k)=1+\lceil \lg n\rceil,\,\, \dist(Q_n^k) = 2,\] and $\rho(Q_n^k)$ can be found recursively.  For $k$ even, 
\[\det(Q_n^k) \le n,\, \, \dist(Q_n^k) = 2,\,\, \text{ and } \,\, \rho(Q_n^k)\le n+1.\]
\end{cor}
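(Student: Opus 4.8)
The corollary is a transfer result, so the plan is to reduce every claim to the values already established for $Q_n$ and $Q_n^2$ by way of the equality of automorphism groups quoted from~\cite{MiPe1994}. The organizing principle I would make explicit first is that the three parameters $\det$, $\dist$, and $\rho$ are invariants of a graph $G$ depending only on its vertex set together with its automorphism group viewed as a permutation group: a set $S\subseteq V(G)$ is determining exactly when the only element of $\aut(G)$ fixing $S$ pointwise is the identity; a coloring of $V(G)$ is distinguishing exactly when the only element of $\aut(G)$ preserving the color classes is the identity; and $\rho$ is the minimum size of a color class over $2$-distinguishing colorings. None of these definitions refers to the edge set except through $\aut(G)$.

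The crucial input, cited from~\cite{MiPe1994}, is that for $1\le k\le n-2$ the graphs $Q_n^k$, $Q_n$, and $Q_n^2$ all share the vertex set $\Z_2^n$, and that as permutation groups on this common vertex set $\aut(Q_n^k)=\aut(Q_n)$ when $k$ is odd and $\aut(Q_n^k)=\aut(Q_n^2)$ when $k$ is even. Because the vertex sets are literally identical (not merely isomorphic) and the automorphism groups coincide as subgroups of the symmetric group on that set, every condition in the previous paragraph holds for $Q_n^k$ exactly when it holds for $Q_n$ (odd $k$) or $Q_n^2$ (even $k$). Hence the determining sets, the distinguishing colorings, and the $2$-distinguishing colorings of minimum color-class size all coincide, and the three parameters are equal across the relevant pairs.

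It then remains only to read off the values. For odd $k\le n-2$ I would invoke $\det(Q_n)=1+\lceil\lg n\rceil$ (Theorem~\ref{thm:DetQn}), $\dist(Q_n)=2$ for $n\ge4$ (Theorem~\ref{thm:distQn}), and the recursive determination of $\rho(Q_n)$ from~\cite{Bo2021} (Theorem~\ref{thm:costQn}), transferring each to $Q_n^k$. For even $k\le n-2$ I would invoke Theorem~\ref{thm:distdetpowers}, which gives $\det(Q_n^2)\le n$, $\dist(Q_n^2)=2$, and $\rho(Q_n^2)\le n+1$ for $n>3$, and transfer these to $Q_n^k$. Since the hypothesis $n\ge4$ sits in the range where the cited $Q_n$- and $Q_n^2$-values hold, the two displayed conclusions follow.

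I do not expect a substantial obstacle here: the mathematical content lives in Theorem~\ref{thm:distdetpowers} and in the cited results, while the corollary is bookkeeping. The one point that must be handled with care is the invariance principle itself, specifically the insistence that the vertex sets are identical and the automorphism groups agree as permutation groups, since it is this literal coincidence (rather than an abstract group isomorphism) that licenses transferring determining sets and colorings unchanged. Checking that the parameter ranges line up ($1\le k\le n-2$ and $n\ge4$, so that $Q_n^k\ne K_n$ and the source theorems apply) is the only other thing to verify.
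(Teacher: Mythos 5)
Your proposal is correct and follows essentially the same route as the paper: the paper likewise invokes the result of Miller and Perkel that for $1\le k\le n-2$ the automorphism groups of $Q_n^k$ and $Q_n$ (for $k$ odd) or $Q_n^2$ (for $k$ even) coincide as permutation groups on the identical vertex set, concludes that determining sets and distinguishing colorings transfer unchanged, and then reads off the values from Theorems~\ref{thm:DetQn}, \ref{thm:distQn}, \ref{thm:costQn}, and~\ref{thm:distdetpowers}. Your explicit statement of the invariance principle (that $\det$, $\dist$, and $\rho$ depend only on the vertex set together with $\aut(G)$ as a permutation group) is exactly the point the paper makes via the equality of pointwise and setwise stabilizers.
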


 \section{Hamming Graphs}\label{sec:hamming}
 
Hamming graphs are a natural generalization of hypercubes. While $Q_n$ is the Cartesian product of $n$ copies of $K_2$, the Hamming graph $H(n,m)$ is the Cartesian product of $n$ copies of $K_m$.  Thus, $K_m^{\Box n}$ is an alternate notation for a Hamming graph.

Because Hamming graphs are a Cartesian product of graphs, characteristic matrices are a useful tool for studying symmetry parameters of Hamming graphs. The result in Theorem~\ref{thm:DetHam} comes directly from Theorem 5 in~\cite{Bo2009a}.  The idea behind the proof is similar, but here we more directly find a value for the determining number using Stirling numbers of the second kind. Recall that $S(r,m)$, a Stirling number of the second kind, counts the number of partitions of $[r]$ into $m$ nonempty, unlabeled parts. See Section 3.3 in~\cite{We2020} for more information on Stirling numbers of the second kind.  

\begin{thm}\label{thm:DetHam} The determining number of $H(n,m)$ is the smallest integer $r$ for which $n\leq S(r,m)+S(r,m-1).$ \end{thm}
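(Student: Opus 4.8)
The plan is to invoke the characteristic matrix criterion (Theorem~\ref{thm:CharMatrix}) with all factors equal to $K_m$ and translate its two conditions into the combinatorics of set partitions. First I would record the relevant facts about the single factor $K_m$. Since $\aut(K_m)$ is the full symmetric group on its $m$ vertices, the pointwise stabilizer of a vertex subset $W$ is the symmetric group on the complement of $W$; hence $W$ is a determining set for $K_m$ exactly when $|W|\ge m-1$, and every $(m-1)$-subset of $V(K_m)$ is a minimum determining set. Consequently the first condition of Theorem~\ref{thm:CharMatrix} --- that the entries of each column of the $r\times n$ characteristic matrix $X$ contain a determining set for its factor --- becomes simply that \emph{each column uses at least $m-1$ distinct values}, i.e.\ its image has size $m$ or $m-1$.

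Next I would analyze the ``no two columns isomorphic'' condition. Viewing a column as a function $f\colon [r]\to[m]$ from rows to vertices of $K_m$, two columns are isomorphic precisely when one is obtained from the other by post-composition with a relabeling in $\aut(K_m)$. Two such functions are related by a relabeling if and only if they induce the \emph{same partition of the row set $[r]$ into nonempty fibers}: the forward direction is immediate, and the converse holds because any bijection between two $k$-element images with $k\le m$ extends to a permutation of $[m]$. Thus the isomorphism classes of columns with image size exactly $k$ are in bijection with partitions of $[r]$ into $k$ nonempty parts, of which there are $S(r,k)$. Combined with the image-size restriction, the number of pairwise non-isomorphic admissible columns available over $r$ rows is exactly $S(r,m)+S(r,m-1)$.

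With the count in hand, both bounds would follow. Let $r_0$ denote the smallest $r$ with $n\le S(r,m)+S(r,m-1)$. For the lower bound, any determining set $S$ has a characteristic matrix whose $n$ columns are admissible and pairwise non-isomorphic by Theorem~\ref{thm:CharMatrix}; since at most $S(|S|,m)+S(|S|,m-1)$ such classes exist, we get $n\le S(|S|,m)+S(|S|,m-1)$, which forces $|S|\ge r_0$. For the upper bound, given $S(r_0,m)+S(r_0,m-1)\ge n$, I would choose representatives of $n$ distinct admissible isomorphism classes and assemble them as the columns of an $r_0\times n$ matrix; both conditions of Theorem~\ref{thm:CharMatrix} then hold by construction, so the set of rows is a determining set of size $r_0$, \emph{provided the rows are genuinely distinct vertices}.

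That last proviso is where I expect the real work to be: the counting shows that \emph{enough} admissible classes exist, but I must choose $n$ of them whose fibers jointly separate all of $[r_0]$ (equivalently, whose common refinement is the discrete partition), so that the $r_0$ rows are distinct and $|S|=r_0$ exactly. I would handle this by exhibiting a small \emph{separating} subfamily of admissible columns and then padding it out to $n$ distinct classes using the surplus guaranteed by $N:=S(r_0,m)+S(r_0,m-1)\ge n$. Since $S(r,m)+S(r,m-1)=0$ for $r\le m-2$, we always have $r_0\ge m-1$; the extreme cases $r_0\in\{m-1,m\}$ are immediate because the discrete partition of $[r_0]$ is itself admissible and separates all points, while for $r_0>m$ one can separate $[r_0]$ using $\lceil\log_m r_0\rceil$ coordinate partitions built from codewords that include the $m$ constant vectors, each of which yields an admissible column. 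Verifying that this separating family fits within the budget of $n$ columns is the one calculation I would carry out in full; the partition/Stirling bijection above is the conceptual heart of the argument.
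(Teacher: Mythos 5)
Your proposal follows essentially the same route as the paper's proof: both invoke Theorem~\ref{thm:CharMatrix}, observe that a column is admissible exactly when it contains at least $m-1$ distinct values, identify isomorphism classes of columns with partitions of the row set into $m$ or $m-1$ nonempty unlabeled parts, and read off the lower bound from the count $S(r,m)+S(r,m-1)$ and the upper bound from choosing $n$ distinct classes as columns.

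The one place you go beyond the paper is the proviso about the rows being genuinely distinct vertices, and you are right that it deserves attention --- the paper passes over it silently, simply asserting that a matrix with $n$ nonisomorphic admissible columns is the characteristic matrix of a determining set of size $r$. However, your separating-code construction is much heavier than necessary. Minimality of $r_0$ dispatches the issue in one step: if rows $i\neq j$ of the constructed $r_0\times n$ matrix were equal, then every one of the $n$ chosen partitions of $[r_0]$ would place $i$ and $j$ in a common block; merging $i$ with $j$ gives an injection from these $n$ classes into the partitions of an $(r_0-1)$-element set into $m$ or $m-1$ nonempty parts (the number of blocks is unchanged, and the original partition is recovered by returning $j$ to the block of $i$), so $n\le S(r_0-1,m)+S(r_0-1,m-1)$, contradicting the minimality of $r_0$. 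Thus \emph{any} choice of $n$ distinct admissible classes automatically yields distinct rows, and no separating subfamily, budget verification, or case split on $r_0$ is needed. If you do retain your construction, note one wrinkle: the $\lceil\log_m r_0\rceil$ coordinate columns of your code may be isomorphic to one another; this is harmless, since by your own bijection isomorphic columns induce the same partition of $[r_0]$ and duplicates can be discarded without losing separation --- but that is exactly the kind of detail the minimality argument lets you avoid.
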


\begin{proof} By Theorem~\ref{thm:CharMatrix},  $S\subset V(H(m,n))$ of cardinality $r$ is a determining set if and only if its $r \times n$ characteristic matrix contains $n$ nonisomorphic columns, each of which has at least $m-1$ distinct vertices of $K_m$ among its entries.  We will first count the number of nonisomorphic columns that contain all $m$ vertices of $K_m$, and then count all those that contain precisely $m-1$ such vertices. Note that in order for a column of length $r$ to contain $m$ distinct vertices of $K_m$, we require that $r\geq m$.  In particular this tells us that $\det(H(m,n))\geq m$.

There is a bijection between partitions of $[r]$ into $m$ nonempty labeled parts  and  columns of length $r$ containing $m$ distinct vertices of $K_m$.  This bijection is achieved by placing $i\in [r]$  in part $j$ of the partition  precisely when the column has vertex $j\in [m]$ in its $i\th$ position.

Two columns of length $r$ are isomorphic if and only if their associated partitions are the same up to a permutation of the part labels.  Thus, to count the number of nonisomorphic columns containing at least one of each of the vertices of $K_m$, we need only count the number of ways to partition $[r]$ into $m$ nonempty unlabeled sets. This is precisely $S(r,m)$.

Next we want to count the number of nonisomorphic columns of length $r$ with  precisely $m-1$ distinct vertices of $K_m$.  Note that if we use the specific vertices $1,\ldots m-1$, then we can compute this as $S(r,m-1)$.  Further, any column of length $r$ with precisely $m-1$ of the vertices of $K_m$ is isomorphic to a column containing the specific vertices $\{1,\ldots, m-1\}$.  Thus, the  number of nonisomorphic columns of length $r$ with  precisely $m-1$ distinct vertices of $K_m$ is $S(r,m-1)$.

Thus, there are at most $S(r,m)+S(r,m-1)$ nonisomorphic columns in a $r\times n$ characteristic matrix for a determining set of $H(m,n)$.
Thus, if $n\leq S(r,m)+S(r,m-1)$ we can create a $r\times n$ characteristic matrix for a determining set of size $r$ for $H(m,n)$.   Thus, $\det(H(m,n))\leq r$. Further, if $n<S(r-1,m) + S(r-1, m-1)$, we could create an $(r-1)\times n$ characteristic matrix of a determining set of $H(m,n)$
implying that 
$\det(H(m,n)) < r$.
Thus, if $r$ is the smallest integer for which $n\leq S(r,m)+S(r,m-1)$, then $\det(H(m,n))=r$.
\end{proof}

To achieve a closed formula for $\det(H(m,n))$ we can start with the formula from~\cite{We2020} page 121, and adjust by dropping the final term of 0, yielding \[S(r,m)=\frac{1}{m!} \sum_{i=0}^{m-1} (-1)^i {m\choose i} (m-i)^r.\]  With a little bit of algebra, $S(r,m) + S(r, m-1)$ becomes

\[\frac{1}{m!}\sum_{i=0}^{m-1} (-1)^i {m\choose i} (m-i)^r + \frac{1}{(m-1)!}\sum_{i=0}^{m-2} (-1)^i {m-1\choose i} (m-i-1)^r\]

\[=\frac{1}{(m-1)!} \left( (-1)^{m-1}  + \sum_{i=0}^{m-2} (-1)^i {m-1 \choose i} \Big( (m-i)^{r-1} + (m-i-1)^r \Big) \right).\]

\begin{cor} $\det(H(m,n))$ is the smallest integer $r$ for which 
\[n\leq \frac{1}{(m-1)!} \left( (-1)^{m-1}  + \sum_{i=0}^{m-2} (-1)^i {m-1 \choose i} \Big( (m-i)^{r-1} + (m-i-1)^r \Big) \right).\] \end{cor}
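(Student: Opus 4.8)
The plan is to deduce the Corollary from Theorem~\ref{thm:DetHam} by establishing the single algebraic identity
\[
S(r,m) + S(r,m-1) = \frac{1}{(m-1)!}\left( (-1)^{m-1} + \sum_{i=0}^{m-2}(-1)^i \binom{m-1}{i}\Big( (m-i)^{r-1} + (m-i-1)^r \Big)\right)
\]
for every positive integer $r$. Granting this, the inequality $n \le S(r,m)+S(r,m-1)$ that appears in Theorem~\ref{thm:DetHam} is word-for-word the inequality in the Corollary, so the two conditions have the same smallest solution $r$, which is exactly $\det(H(m,n))$. Thus the entire task reduces to verifying the displayed identity.

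First I would record the explicit formula $S(r,m) = \frac{1}{m!}\sum_{i=0}^{m-1}(-1)^i \binom{m}{i}(m-i)^r$, together with its analogue for $S(r,m-1)$ (whose outer sum runs only to $i=m-2$). The essential difficulty is that the two summands carry different prefactors ($1/m!$ versus $1/(m-1)!$) and both carry the exponent $r$, whereas the target expression has a common prefactor $1/(m-1)!$ and the mixed exponents $r-1$ and $r$. The key step that resolves this is the binomial identity $\frac{1}{m}\binom{m}{i} = \frac{1}{m-i}\binom{m-1}{i}$, valid for $0 \le i \le m-1$, which I would verify directly from the factorial definition. Applying it to $S(r,m)$ converts the prefactor to $1/(m-1)!$, replaces $\binom{m}{i}$ by $\binom{m-1}{i}$, and, after absorbing one factor of $(m-i)$, lowers the exponent from $r$ to $r-1$.

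With $S(r,m)$ rewritten as $\frac{1}{(m-1)!}\sum_{i=0}^{m-1}(-1)^i \binom{m-1}{i}(m-i)^{r-1}$, I would split off the top term $i=m-1$, which contributes $(-1)^{m-1}\binom{m-1}{m-1}\,1^{\,r-1} = (-1)^{m-1}$ and accounts for the stray constant in the Corollary. The remaining sum now runs over exactly the same range $0 \le i \le m-2$ as $S(r,m-1)$, so the two can be added termwise, pairing $(m-i)^{r-1}$ from the first with $(m-i-1)^r$ from the second to produce the bracketed factor $\big((m-i)^{r-1} + (m-i-1)^r\big)$. This yields the claimed identity and completes the reduction.

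The only genuine obstacle is bookkeeping: getting the binomial identity exactly right and confirming that the surviving index ranges coincide, so that the termwise combination of the two sums is legitimate. Everything else is routine substitution, with no case analysis or new combinatorial input needed beyond the closed form for Stirling numbers of the second kind.
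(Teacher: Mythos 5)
Your proposal is correct and follows the paper's own derivation: both reduce the Corollary to Theorem~\ref{thm:DetHam} via the identity $S(r,m)+S(r,m-1) = \frac{1}{(m-1)!}\bigl( (-1)^{m-1} + \sum_{i=0}^{m-2}(-1)^i\binom{m-1}{i}\bigl((m-i)^{r-1}+(m-i-1)^r\bigr)\bigr)$, which the paper dismisses as ``a little bit of algebra'' and you justify explicitly through the rewriting $\frac{1}{m}\binom{m}{i} = \frac{1}{m-i}\binom{m-1}{i}$ and splitting off the $i=m-1$ term. Your write-up simply supplies the bookkeeping the paper omits.
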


A number of mathematicians have studied the distinguishing number of Hamming graphs. The following theorem summarizes their results.

\begin{thm}\label{thm:DistHam}\cite{Al2005,BoCo2004, ImKl2006,KlZh2007} The Hamming graph $H(m,n)$ is 2-distinguishable if and only if \[(i)\,  m=2 \text{ and } n\geq 4, \text{ or } (ii) \, m=3 \text{ and } n\geq 3, \text{ or } (iii)\,  m\geq 4 \text{ and }  n\geq 2 .\]\end{thm}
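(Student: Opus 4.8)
The plan is to work directly with the automorphism group of $H(m,n)=K_m^{\Box n}$. Since $K_m$ is prime with respect to the Cartesian product and all $n$ factors are isomorphic, $\aut(H(m,n))$ is the wreath product $S_m \wr S_n$: an automorphism is given by a permutation $\sigma\in S_n$ of the $n$ coordinates together with an independent permutation $\pi_i\in S_m$ of the alphabet in each coordinate $i$. This is the same description underlying the characteristic-matrix criterion of Theorem~\ref{thm:CharMatrix}. A $2$-distinguishing coloring is then just a subset $A\subseteq [m]^n$ (its color class) whose setwise stabilizer in $S_m\wr S_n$ is trivial, and since the statement is an iff I would prove the two directions separately. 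The line $m=2$ is the hypercube and follows from Theorem~\ref{thm:distQn}, so the real work is for $m\ge 3$.

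For sufficiency (the claimed ranges admit a $2$-distinguishing coloring) I would build the color class as a determining set whose induced subgraph is rigid, and then quote Lemma~\ref{lem:inducedsg}. Concretely, I would choose an ordered set $T$ of vertices whose characteristic matrix satisfies the two conditions of Theorem~\ref{thm:CharMatrix}: every column contains at least $m-1$ distinct vertices of $K_m$ (so no nontrivial $\pi_i$ can fix $T$ pointwise, a permutation of $[m]$ fixing $m-1$ symbols being the identity) and no two columns are isomorphic (so no nontrivial coordinate permutation $\sigma$ can fix $T$). This makes $T$ a determining set. I would then arrange the rows so that $H(m,n)[T]$ is asymmetric; Lemma~\ref{lem:inducedsg} upgrades the pointwise rigidity of a determining set to the setwise rigidity needed for a color class, yielding a $2$-distinguishing coloring. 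The feasibility of meeting all three requirements at once is exactly what produces the thresholds: an alphabet of size $m\ge 4$ leaves enough symbol-room to realize non-isomorphic, symbol-rich columns already at $n=2$, whereas $m=3$ forces $n\ge 3$ before such a configuration exists.

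For necessity (everything outside the three cases is not $2$-distinguishable) I would dispatch the boundary graphs one at a time. When $n=1$ we have $H(m,1)=K_m$ with $\dist(K_m)=m$, so no complete graph with $m\ge 3$ is $2$-distinguishable; the cases $m=2,\ n\in\{2,3\}$ are the non-$2$-distinguishable hypercubes $Q_2,Q_3$ of Theorem~\ref{thm:distQn}. The remaining case $m=3,\ n=2$ is the graph $K_3\Box K_3$, which is strongly regular and unusually symmetric ($|\aut|=72$ on $9$ vertices); here I would argue directly that every partition of the vertices into two color classes is preserved by some nontrivial element of $S_3\wr S_2$, by an orbit-counting and case analysis on the possible color-class sizes.

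The main obstacle is that the thresholds are tight and the standard global tools are too weak to detect them. A motion/union-bound argument shows $H(m,n)$ is $2$-distinguishable once $2^{\,m^{n-1}} > (m!)^n\,n!$, the minimum motion for $m\ge 3$ being the $2m^{n-1}$ vertices moved by a single alphabet transposition. But this inequality already fails at both $m=4,\ n=2$ and $m=3,\ n=3$, precisely the first cases the theorem declares $2$-distinguishable. Hence the crux is genuinely local: producing an explicit rigid color class at each threshold, and proving impossibility for $K_3\Box K_3$. This is also why the result is assembled from several sources rather than following from one clean estimate.
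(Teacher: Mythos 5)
First, a point of reference: the paper does not actually prove Theorem~\ref{thm:DistHam}. It is stated as a summary of results imported wholesale from \cite{Al2005,BoCo2004,ImKl2006,KlZh2007}, so there is no internal argument to compare against; your proposal has to stand on its own as a reproof of a known classification. Your skeleton is sound in several respects: $\aut(H(m,n)) \cong S_m \wr S_n$ by Sabidussi's theorem, the line $m=2$ does reduce to Theorem~\ref{thm:distQn}, the case $n=1$ is correctly dispatched by $\dist(K_m)=m$, Lemma~\ref{lem:inducedsg} is a legitimate device for sufficiency, and your computation that the motion/union bound fails at $(m,n)=(4,2)$ and $(3,3)$ is correct.

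However, there are genuine gaps; the proposal is a plan whose crucial steps are announced rather than performed. First, the sufficiency direction is never executed: you exhibit no determining set with asymmetric induced subgraph, at the thresholds or anywhere else. The problem is also larger than your closing paragraph admits, because the motion bound fails along the \emph{entire} line $n=2$: since $m!\ge 2^{m-1}$, we have $(m!)^2\cdot 2 > 2^m$ for every $m$, so "rigid color classes at each threshold" do not suffice. You need either explicit constructions for infinitely many pairs $(m,n)$ or an induction lifting $2$-distinguishability from $H(m,n)$ to $H(m,n+1)$ (the route of \cite{ImKl2006,KlZh2007}), and the proposal contains neither. Such constructions do exist --- in $H(4,2)$ the staircase $\{(1,1),(1,2),(1,3),(2,2),(2,3),(3,3),(3,4),(4,4)\}$ induces an asymmetric subgraph and its two columns are symbol-rich and nonisomorphic --- but someone has to write them down and generalize them. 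Second, the impossibility for $K_3\Box K_3$ is deferred to an unexecuted case analysis. Third, and most substantively, your explanation of the thresholds is wrong: you claim $m=3$ "forces $n\ge 3$" before nonisomorphic, symbol-rich columns exist, but the set $\{(1,1),(2,2),(3,1)\}$ already has nonisomorphic columns each containing $m-1=2$ symbols, so it is a determining set for $H(3,2)$ by Theorem~\ref{thm:CharMatrix}. What fails for $K_3\Box K_3$ is not pointwise rigidity (determining sets exist) but setwise rigidity: no subset of its nine vertices has trivial setwise stabilizer in the order-$72$ group. The thresholds live precisely in the gap between determining sets and distinguishing color classes --- the gap Lemma~\ref{lem:inducedsg} is meant to bridge --- and your argument leaves that gap open at the one place where it is genuinely at issue.
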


In~\cite{Bo2013a}, Boutin studied the cost of distinguishing Cartesian powers of prime, connected graphs, as stated below.

\begin{thm}\label{thm:CostCart}\cite{Bo2013a} Let $G$ be a prime, connected graph on at least three vertices with $\det(G) \le n$.
If $G^{\Box n}$ is a $2$-distinguishable and $\max\{2, \det(G)\}$ $<$ $\det(G^{\Box n})$, then \[\rho(G^{\Box n}) \in \{\det(G^{\Box n}), \det(G^{\Box n})+1\}.\]\end{thm}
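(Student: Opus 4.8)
The plan is to prove the two inequalities $\det(G^{\Box n}) \le \rho(G^{\Box n}) \le \det(G^{\Box n})+1$ separately. For the lower bound I would argue that every color class of a $2$-distinguishing coloring is itself a determining set. In such a coloring the only automorphism preserving the two classes is the identity; since an automorphism fixes one class setwise exactly when it fixes the other setwise, a smallest class $C$ has trivial setwise stabilizer. Any automorphism fixing $C$ pointwise fixes it setwise, so the pointwise stabilizer of $C$ is also trivial, making $C$ a determining set. Minimizing $|C|$ over all $2$-distinguishing colorings then gives $\rho(G^{\Box n}) \ge \det(G^{\Box n})$.

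For the upper bound I would start from a minimum determining set $D$ with $|D| = \det(G^{\Box n})$, presented through its characteristic matrix $X$ as in Theorem~\ref{thm:CharMatrix}: the $n$ columns are pairwise nonisomorphic and each contains a determining set of $G$. Because $G$ is prime and connected, every automorphism of $G^{\Box n}$ has the form $(\alpha_1,\dots,\alpha_n;\pi)$ with $\alpha_j \in \aut(G)$ and $\pi \in S_n$. I would first show that any nontrivial $\sigma \in \setstab(D)$ must move some vertex of $D$, that is, induce a nontrivial permutation $\tau$ of the rows of $X$. If $\tau$ were trivial, the defining relation $\alpha_j(v_{i,\pi(j)}) = v_{i,j}$ would make column $\pi(j)$ isomorphic to column $j$ for every $j$; since no two columns of $X$ are isomorphic this forces $\pi=\mathrm{id}$, and then each $\alpha_j$ fixes a determining set of $G$ pointwise and is therefore trivial, so $\sigma=\mathrm{id}$. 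Consequently $\setstab(D)$ acts faithfully on the $\det(G^{\Box n})$ vertices of $D$ through genuine permutations.

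With this in hand, I would produce a $2$-distinguishing coloring whose smaller class is $D$ together with at most one additional vertex. If $\setstab(D)$ is already trivial, then $D$ is a color class and $\rho(G^{\Box n}) = \det(G^{\Box n})$. Otherwise I would choose a vertex $w \notin D$ that is fixed by no nontrivial element of $\setstab(D)$; coloring $D \cup \{w\}$ with the second color then yields $\setstab(D \cup \{w\}) = \setstab(D) \cap \ptstab(w)$, which is trivial, so $D \cup \{w\}$ is a $2$-distinguishing color class and $\rho(G^{\Box n}) \le \det(G^{\Box n})+1$. Alternatively, one may try to arrange that $G^{\Box n}[D \cup \{w\}]$ is asymmetric and invoke Lemma~\ref{lem:inducedsg} directly.

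The main obstacle is the existence of such a single vertex $w$: I must show that the nontrivial elements of $\setstab(D)$ cannot fix every vertex of $G^{\Box n}$ simultaneously, i.e. that $\bigcup_{\sigma \ne \mathrm{id}} \mathrm{Fix}(\sigma)$ is a proper subset of $V(G^{\Box n})$. Bounding $|\mathrm{Fix}(\sigma)|$ for an individual $\sigma$ is routine — a nontrivial $\pi$ ties together coordinates, and a nontrivial $\alpha_j$ fixes at most $|V(G)|-2$ vertices of $G$ — but a naive union bound over all of $\setstab(D)$ is too weak, so the real work is to exploit the rigidity forced by the nonisomorphic columns to show that $\setstab(D)$ is small and simply structured (essentially generated by swaps of interchangeable rows), after which a vertex distinguishing those rows breaks the entire stabilizer. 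This is exactly where the hypotheses $|V(G)| \ge 3$ and $\max\{2,\det(G)\} < \det(G^{\Box n})$ enter: they guarantee both enough vertices in $G^{\Box n}$ to locate $w$ and enough rows in $X$ to rule out the degenerate small cases.
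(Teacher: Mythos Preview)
This theorem is quoted from \cite{Bo2013a} without proof in the present paper, so there is no in-paper argument to compare against; I can only assess your sketch on its own.

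Your lower bound is the standard observation and is correct, as is your structural step that $\setstab(D)$ acts faithfully by permutations on the vertices of $D$. The gap is in the displayed equality $\setstab(D\cup\{w\}) = \setstab(D)\cap\ptstab(w)$. Only the inclusion $\supseteq$ is automatic: an automorphism preserving $D\cup\{w\}$ setwise may send $w$ to some $d\in D$ and some $d'\in D$ back to $w$, and such an automorphism lies in neither factor on the right-hand side. So even a $w$ fixed by no nontrivial element of $\setstab(D)$ need not make $D\cup\{w\}$ a distinguishing class; you would additionally have to rule out automorphisms that exchange $w$ with an element of $D$, and you give no mechanism for that.

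You correctly flag the existence of a suitable extra vertex as the main obstacle and suggest the fix is to show $\setstab(D)$ is ``small and simply structured.'' That is the right instinct, but it is essentially the whole proof, and a union bound over fixed-point sets is, as you note, not the tool. The argument in \cite{Bo2013a} works directly with the characteristic matrix: one shows that a minimum determining set can be \emph{chosen} (or augmented by a single row) so that the rows of $X$ are themselves pairwise distinguishable---no automorphism of $G^{\Box n}$ can induce a nontrivial row permutation. This simultaneously handles the existence of the extra vertex and the ``$w$ swapped into $D$'' problem, and it is precisely where the hypotheses $|V(G)|\ge 3$, $\det(G)\le n$, and $\max\{2,\det(G)\}<\det(G^{\Box n})$ are consumed. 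Your plan names the destination but does not supply this combinatorial core.
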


Now, let $G = K_m$ with $m \ge 3$. Note that cliques are prime with respect to the Cartesian product.  
Since $\det(K_m)=m-1$ and we  showed in the proof of Theorem~\ref{thm:DetHam} that $\det(H(m,n))\geq m$, we have $\det(K_m)< \det(K_m^{\Box n}) = \det(H(m,n))$.  Thus, we can use Theorem~\ref{thm:CostCart} to achieve the following theorem.

\begin{thm}\label{thm:CostHam}   If $2 \le m-1 \leq n$, and $\dist((H(m,n))=2$, then \[\rho(H(m,n)) \in \{\det(H(m,n)), \det(H(m,n))+1\}.\]\end{thm}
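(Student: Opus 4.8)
The plan is to derive Theorem~\ref{thm:CostHam} as a direct application of Theorem~\ref{thm:CostCart}, treating the Hamming graph $H(m,n)$ as the Cartesian power $K_m^{\Box n}$. The entire argument amounts to verifying that $G = K_m$ satisfies every hypothesis of Theorem~\ref{thm:CostCart}, after which the conclusion transfers verbatim. So the proof is really a hypothesis-checking exercise rather than a fresh construction.

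First I would fix $G = K_m$ and record the three structural facts needed. The clique $K_m$ is prime with respect to the Cartesian product (noted in the excerpt), and for $m \ge 3$ it has at least three vertices, so the ``prime, connected graph on at least three vertices'' requirement is met precisely when $m \ge 3$, i.e.\ when $m - 1 \ge 2$. Next I would invoke the standard fact that $\det(K_m) = m - 1$: any determining set of a clique must fix all but one vertex, since any two unfixed vertices could be swapped. The hypothesis $\det(G) \le n$ of Theorem~\ref{thm:CostCart} then reads $m - 1 \le n$, which is exactly the inequality assumed in the statement of Theorem~\ref{thm:CostHam}.

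The one genuine verification is the strict inequality $\max\{2, \det(G)\} < \det(G^{\Box n})$. Here I would quote the lower bound established inside the proof of Theorem~\ref{thm:DetHam}, namely $\det(H(m,n)) \ge m$, together with $\det(K_m) = m - 1$. Since $m - 1 < m \le \det(H(m,n)) = \det(K_m^{\Box n})$, and since $m - 1 \ge 2$ forces $\max\{2, \det(K_m)\} = m - 1$, the strict inequality $\max\{2, \det(K_m)\} < \det(K_m^{\Box n})$ holds. The remaining hypothesis of Theorem~\ref{thm:CostCart}, that $G^{\Box n}$ be $2$-distinguishable, is assumed outright in the statement as $\dist(H(m,n)) = 2$. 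With all hypotheses confirmed, Theorem~\ref{thm:CostCart} yields $\rho(K_m^{\Box n}) \in \{\det(K_m^{\Box n}), \det(K_m^{\Box n}) + 1\}$, and rewriting $K_m^{\Box n} = H(m,n)$ gives the claimed conclusion.

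I do not anticipate a serious obstacle, since the heavy lifting is done by Theorem~\ref{thm:CostCart} and by the determining-number bounds already in hand. The only point requiring care is the bookkeeping around $\max\{2, \det(G)\}$: one must confirm that the condition $2 \le m - 1$ in the hypothesis is exactly what guarantees $\max\{2, m-1\} = m-1$ and simultaneously secures the ``at least three vertices'' condition, so that no separate small-$m$ case analysis is needed. Everything else is substitution.
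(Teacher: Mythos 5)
Your proposal is correct and follows essentially the same route as the paper: both apply Theorem~\ref{thm:CostCart} to $G = K_m$, using primality of cliques, $\det(K_m) = m-1 \le n$, and the bound $\det(H(m,n)) \ge m$ from the proof of Theorem~\ref{thm:DetHam} to verify the strict inequality $\max\{2,\det(K_m)\} < \det(K_m^{\Box n})$. Your explicit check that $2 \le m-1$ makes $\max\{2, m-1\} = m-1$ is a slightly more careful bookkeeping of a step the paper leaves implicit, but the argument is identical in substance.
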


\section{Folded Hypercubes}\label{sec:folded}

The folded hypercube graph $FQ_n$ is the result of adding edges between opposite vertices in the hypercube $Q_n$. More precisely, two vertices are adjacent if and only if their Hamming distance is either $1$ or $n$. These graphs were introduced by El-Amaway and Latifi in 1991~\cite{ElLa1991}, who showed that they had better network communication parameters than regular hypercubes.  

Note that $FQ_1 = K_2$, $FQ_2 = K_4$ and $FQ_3 = K_{4,4}$,  where one part consists of all vertices with odd Hamming weight and the other consists of all vertices with even Hamming weight. The determining and distinguishing numbers of these graphs are known: $\det(FQ_1) = 1$, $\det(FQ_2) = 3$,
and $\det(FQ_3) = 6$,  and $\dist(FQ_1)= 2$, $\dist(FQ_3) = 3$, and $\dist(FQ_3) = 5.$

Recall from Section~\ref{sec:hypercubes}, $\aut(Q_n)$ is the semidirect product of translations and $S_n$, where the elements of $S_n$ simply permute the $n$ bits. Mirafzal showed in~\cite{Mirafzal2016SomeOA} that $\aut(FQ_n)$ is closely related to  $\aut(Q_n)$. When $n\ge 4$, $\aut(FQ_n)$ is the semidirect product of translations and $S_{n+1}.$ More precisely,  any automorphism of $FQ_n$ is of the form $\rho_{\bf c} \circ \phi$ where $\rho_{\bf c}$ is translation by $\bf c$ and $\phi$ is a linear extension of a permutation of $\{\e_1, \dots , \e_n, \bf 1\}$ where $\e_i$ is the standard basis vector with a 1 in position $i$ and 0's elsewhere. That is,
\[
\phi(\a) = \phi(a_1a_2\dots a_n) = a_1\phi(\e_1)  + \dots + a_n\phi(\e_n).
\]
If $\phi(\e_i) = \1$ and $\phi(\1) = \bf e_\ell$, then $\phi(\a) = \phi(a_1a_2\dots a_n)$ is 
\begin{equation}\label{eq:phiAction}
 (a_i+a_1)\phi(\e_1) +  \dots + (a_i + a_{i-1})\phi(\e_{i-1}) + (a_i + a_{i+1})\phi(\e_{i+1})  + \dots (a_i+a_n)\phi(\e_n) + a_i\e_\ell. 
\end{equation}
Equivalently, given the conditions above, if $\phi(\e_j) = \e_k$, then the $k^{th}$ position of $\phi(\a)$ is $a_i+a_j$. Thus,  $\phi$ fixes $\a$ if and only if $a_i = a_\ell$ and $a_i+a_j = a_k$ whenever $\phi(\e_j)=\e_k$. 
Note that if $\phi \in S_{n+1}$ fixes $\1$, then $\phi$ simply permutes the bits. Thus, $\aut(Q_n)$ is a subgroup of $\aut(FQ_n)$.

The lemma below gives some necessary conditions for $S  \subseteq V(FQ_n)$ to be a determining set for $FQ_n$. 

\begin{lemma}\label{lem:DetFactsFolded}
Let $S  \subseteq V(FQ_n)$ be a determining set for $FQ_n$, where $n \ge 4$. Then
\begin{enumerate}
    \item\label{part:DetFactsFolded1}  $S$ is a determining set for $Q_n$,
    \item \label{part:DetFactsFolded2}there is a determining set for $FQ_n$ of size $|S|$ containing $\0$, 
    \item \label{part:DetFactsFolded3}for each $i \in [n]$, there is some ${\bf v} \in S$ such that $v_i = 1$. 
\end{enumerate}
\end{lemma}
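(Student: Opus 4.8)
The plan is to treat the three parts separately, in each case leaning on the description of $\aut(FQ_n)$ recalled just above the lemma as translations together with linear extensions of permutations of $\{\e_1, \dots, \e_n, \mathbf{1}\}$. Parts~\ref{part:DetFactsFolded1} and~\ref{part:DetFactsFolded2} are essentially formal. For part~\ref{part:DetFactsFolded1}, I would invoke the stated fact that $\aut(Q_n)$ is a subgroup of $\aut(FQ_n)$: if $\sigma \in \aut(Q_n)$ fixes $S$ pointwise, then $\sigma$ is also an automorphism of $FQ_n$ fixing $S$ pointwise, so $\sigma$ is the identity because $S$ determines $FQ_n$; hence no nontrivial automorphism of $Q_n$ fixes $S$, i.e.\ $S$ determines $Q_n$. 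For part~\ref{part:DetFactsFolded2}, I would use that determining sets are preserved by automorphisms: for any $\varphi \in \aut(FQ_n)$, if $\tau$ fixes $\varphi(S)$ pointwise then $\varphi^{-1}\tau\varphi$ fixes $S$ pointwise and is therefore trivial, so $\varphi(S)$ is a determining set of the same cardinality. Since $FQ_n$ has nontrivial automorphisms for $n \ge 4$, the set $S$ is nonempty; choosing $\mathbf{w} \in S$ and applying the translation $\rho_{\mathbf{w}}$ yields a determining set $\rho_{\mathbf{w}}(S)$ of size $|S|$ containing $\rho_{\mathbf{w}}(\mathbf{w}) = \0$.

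Part~\ref{part:DetFactsFolded3} is the crux, and I would argue by contradiction. Suppose that for some position $i \in [n]$ every $\mathbf{v} \in S$ has $v_i = 0$. Let $\phi$ be the linear extension of the permutation of $\{\e_1, \dots, \e_n, \mathbf{1}\}$ that transposes $\e_i$ and $\mathbf{1}$ and fixes every other $\e_j$; by the description of $\aut(FQ_n)$ above this $\phi$ is an automorphism of $FQ_n$, and it is nontrivial since $\phi(\e_i) = \mathbf{1} \ne \e_i$. For any $\mathbf{v} \in S$, linearity gives $\phi(\mathbf{v}) = \sum_{j=1}^n v_j\,\phi(\e_j)$, and because $v_i = 0$ the only possibly-altered term drops out, leaving $\phi(\mathbf{v}) = \sum_{j \ne i} v_j \e_j = \mathbf{v}$. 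Thus $\phi$ fixes $S$ pointwise, contradicting that $S$ determines $FQ_n$; hence for each $i$ some $\mathbf{v} \in S$ has $v_i = 1$.

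I expect the only genuine obstacle to be locating the witnessing automorphism in part~\ref{part:DetFactsFolded3}: the extra symmetries of $FQ_n$ beyond those of $Q_n$ arise precisely from moving $\mathbf{1}$ into a coordinate direction, and the transposition swapping $\e_i$ and $\mathbf{1}$ is exactly the one whose linear extension acts trivially on every vertex carrying a $0$ in position $i$. Once that map is identified, the verification is the one-line linear computation above, while parts~\ref{part:DetFactsFolded1} and~\ref{part:DetFactsFolded2} follow formally from the subgroup relation $\aut(Q_n) \le \aut(FQ_n)$ and from the automorphism-invariance of determining sets.
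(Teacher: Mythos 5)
Your proof is correct and follows essentially the same route as the paper: part~\ref{part:DetFactsFolded1} via the subgroup relation $\aut(Q_n) \le \aut(FQ_n)$, part~\ref{part:DetFactsFolded2} by translating $S$ by one of its own elements, and part~\ref{part:DetFactsFolded3} via the linear extension of the transposition $\e_i \leftrightarrow \1$, which fixes every vertex with a $0$ in position $i$. The only cosmetic difference is in part~\ref{part:DetFactsFolded2}, where you invoke the general conjugation fact that automorphic images of determining sets are determining, whereas the paper verifies the same invariance by an explicit computation using the linearity of $\phi$ (together with a characteristic-matrix check that is not actually needed for this part); both arguments are valid.
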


\begin{proof} 
 For part~\ref{part:DetFactsFolded1}, note that $S$ is also a subset of $V(Q_n)$. If $S$ is not a determining set for $Q_n$, then there exists a nontrivial automorphism $\sigma \in \aut(Q_n)$ that fixes each element of $S$. Since $\aut(Q_n)$ is a subgroup of $\aut(FQ_n)$, $S$ is not a determining set of $FQ_n$. 
 
 For part~\ref{part:DetFactsFolded2}, let ${\bf a} \in S$ and let $S' = \rho_{\bf a}(S) = \{ {\bf a + v} \mid {\bf v} \in S\}.$ By part~\ref{part:DetFactsFolded1}, $S$ is a determining set for $Q_n$. So, by Theorem~\ref{thm:CharMatrix}, the columns of the characteristic matrix $X(S)$ are nonisomorphic.  Adding $\bf a$ to each vertex in $S$ has the effect of switching all entries in the $i^{th}$ column from 0's to 1's and 1's to 0's. 
 This process cannot make two nonisomorphic columns isomorphic, so $X(S')$ still satisfies  Theorem~\ref{thm:CharMatrix}, which means $S'$ is still a determining set for $Q_n$. Moreover, $\a+\a=\0 \in S'$.
 
 Suppose $\sigma = \rho_{\bf c } \circ \phi \in \aut(FQ_n)$ fixes every vertex in $S'$; we will show $\sigma$ must be the identity. Every $\phi \in S_{n+1}$ fixes $\0$  and so $\sigma = \rho_{\bf c} \circ \phi$ fixes $\bf 0$ if and only if $\bf c = \0$. Thus, $\sigma = \phi$. If $\phi$ fixes every vertex in $S'$ then for any ${\bf v} \in S$, 
 \[
 {\bf a+ v} = \phi({\bf a + v}) = \phi(\bf a) + \phi(\bf v). 
 \]
 since $\phi$ acts linearly. This implies 
 \[
 \rho_{{\bf a} + \phi({\bf a)}}\circ \phi(\bf v) =  ({\bf a} + \phi({\bf a)}) + \phi({\bf v}) = \bf a + \bf a + \bf v = \bf v.
 \]
 Thus, $\rho_{{\bf a} + \phi({\bf a)}}\circ \phi$ fixes every vertex in $S$. By the assumption that $S$ is a determining set for $FQ_n$, we conclude that $\rho_{{\bf a} + \phi({\bf a)}}\circ \phi$ is the identity, which implies that  $\phi$ is the identity in $S_{n+1}$.

 For~\ref{part:DetFactsFolded3}, suppose that every vertex in $S$ has a $0$ as in position $i$.  Let $\phi \in S_{n+1}$ be the permutation that transposes $\bf e_i$ and $\bf 1$, and for all $j \neq i$, $\phi(\bf e_j) = e_j.$ By equation (\ref{eq:phiAction}), $\phi$ is a nontrivial automorphism that fixes any vertex with a $0$ in position $i$, and hence $\phi$ fixes every vertex in $S$. This contradicts the assumption that $S$ is a determining set.
 \end{proof}
 
A consequence of part~\ref{part:DetFactsFolded2} of Lemma~\ref{lem:DetFactsFolded} is that when investigating potential determining sets for folded hypercubes, we can restrict ourselves to sets containing $\0$. Recall from the proof of Lemma~\ref{lem:DetFactsFolded} that $\sigma = \rho_{\bf c} \circ \phi$ fixes $\0$ if and only if $\bf c = \0$. Hence, the only automorphisms that can fix all vertices in a set containing $\0$ are elements of $S_{n+1}$.

Because any set of vertices $S$ of $FQ_n$ is also a set of vertices of $Q_n$ by part~\ref{part:DetFactsFolded1} of Lemma~\ref{lem:DetFactsFolded}, we can form its characteristic matrix $X(S)$. The next few results relate properties of this matrix to whether $S$ is a determining set for $FQ_n$.
 
 \begin{cor}\label{cor:NecessaryXS}
 For $n\ge 4$, if $S$ is a determining set of $FQ_n$ containing $\0$, then its characteristic matrix $X(S)$ has a zero row and distinct, nonzero columns.
 \end{cor}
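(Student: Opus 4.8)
The plan is to read off each of the three asserted properties of $X(S)$ directly from the structural facts already in hand, using that $S$ is a determining set for $FQ_n$ containing $\0$. No single step requires real computation; two of the three properties are essentially immediate. For the zero row, I would simply observe that since $\0 = 0\cdots 0 \in S$, the row of $X(S)$ indexed by $\0$ is entirely zero. For the nonzero columns, I would invoke part~\ref{part:DetFactsFolded3} of Lemma~\ref{lem:DetFactsFolded}, which guarantees that for each position $i \in [n]$ there is some $\V \in S$ with $v_i = 1$; this says precisely that column $i$ of $X(S)$ contains a $1$, so no column is the zero column.

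For distinctness I would first apply part~\ref{part:DetFactsFolded1} of Lemma~\ref{lem:DetFactsFolded} to conclude that $S$ is a determining set for $Q_n = K_2^{\Box n}$, and then apply Theorem~\ref{thm:CharMatrix}, which forces no two columns of $X(S)$ to be isomorphic. The main subtlety here is to unpack what ``isomorphic'' means for $K_2$-factors: the only isomorphisms $K_2 \to K_2$ are the identity and the transposition $0 \leftrightarrow 1$, so two columns are isomorphic exactly when they are equal or complementary. In particular, two equal columns would be isomorphic via the identity, which Theorem~\ref{thm:CharMatrix} forbids; hence the columns of $X(S)$ must be distinct. (They are in fact also pairwise non-complementary, but only distinctness is claimed here.)

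The anticipated obstacle, such as it is, lies entirely in this last interpretive step rather than in any estimate: one must be careful not to conflate ``nonisomorphic'' with ``distinct,'' since nonisomorphism is the stronger condition, and to notice that distinctness is exactly the consequence one needs. Assembling the zero row from $\0 \in S$, the nonzero columns from part~\ref{part:DetFactsFolded3}, and the distinct columns from part~\ref{part:DetFactsFolded1} together with Theorem~\ref{thm:CharMatrix} then completes the proof.
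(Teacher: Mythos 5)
Your proposal is correct and follows essentially the same route as the paper: the zero row from $\0 \in S$, nonzero columns from part~\ref{part:DetFactsFolded3} of Lemma~\ref{lem:DetFactsFolded}, and distinctness from part~\ref{part:DetFactsFolded1} together with Theorem~\ref{thm:CharMatrix}, since nonisomorphic columns are in particular not equal. Your extra remark unpacking why nonisomorphic implies distinct (the identity is a $K_2$-isomorphism) is a slightly more explicit version of the paper's ``nonisomorphic, and therefore distinct,'' but it is the same argument.
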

 
 \begin{proof}
  Since $S$ contains $\0$, $X(S)$ must contain a zero row. Since $S$ is a determining set for $Q_n$, by Theorem~\ref{thm:CharMatrix}, the columns must be nonisomorphic, and therefore distinct. Finally, part~\ref{part:DetFactsFolded3} of the Lemma~\ref{lem:DetFactsFolded} implies that $X(S)$ cannot have a zero column.
 \end{proof}

\begin{prop}\label{prop:DetFQbounds}
For all $n \ge 4$,
$
\lceil\lg(n+1)\rceil + 1 \le \det(FQ_n)\le \lceil \lg(n)\rceil +2.
$
\end{prop}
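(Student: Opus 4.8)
The two inequalities are essentially independent. The lower bound needs only the necessary conditions already recorded in Corollary~\ref{cor:NecessaryXS}, while the upper bound rests on turning the fixing conditions in~(\ref{eq:phiAction}) into a clean statement about the columns and then exhibiting a good column set.

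\textbf{Lower bound.} The plan is a counting argument on columns. Let $S$ be a minimum determining set for $FQ_n$ and put $r=\det(FQ_n)$. By part~\ref{part:DetFactsFolded2} of Lemma~\ref{lem:DetFactsFolded} I may assume $\0\in S$, so by Corollary~\ref{cor:NecessaryXS} the matrix $X(S)$ has a zero row and $n$ distinct, nonzero columns. Every column then has a $0$ in the coordinate indexed by the zero row, so the $n$ columns are distinct nonzero vectors lying in a fixed coordinate hyperplane of $\Z_2^r$. That hyperplane has exactly $2^{r-1}-1$ nonzero vectors, whence $n\le 2^{r-1}-1$, i.e.\ $2^{r-1}\ge n+1$. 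As $r$ is an integer this yields $r\ge\lceil\lg(n+1)\rceil+1$. I expect no obstacle in this half.

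\textbf{Upper bound: the criterion.} Here the plan is to build a determining set of size $r=\lceil\lg n\rceil+2$. Since I may restrict to sets containing $\0$, the only automorphisms that can fix such a set are the permutations $\phi\in S_{n+1}$ of $\{\e_1,\dots,\e_n,\1\}$. First I would reduce the conditions in~(\ref{eq:phiAction}) to the columns $c_1,\dots,c_n$ of $X(S)$, together with an augmenting column $c_{n+1}:=\0$ for the ``$\1$-position.'' A short computation from~(\ref{eq:phiAction}) shows that a $\phi$ with $\phi(\e_i)=\1$ fixes every vertex of $S$ precisely when the induced index permutation realizes the translation $c_j\mapsto c_j+\epsilon$ on the augmented set $C=\{c_1,\dots,c_n,\0\}$ with $\epsilon=c_i\ne\0$, while a $\phi$ fixing $\1$ fixes $S$ precisely when it permutes equal columns. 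Hence $S$ (with $\0\in S$) is determining if and only if the columns are distinct and no nonzero $\epsilon$ satisfies $C+\epsilon=C$.

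\textbf{Upper bound: the construction.} With this criterion I would give $C$ explicitly. Identify the relevant hyperplane with $\Z_2^{\,r-1}$, where $r-1=\lceil\lg n\rceil+1$, and read its elements as integers $0,\dots,2^{\,r-1}-1$. Because $2^{\,r-2}\ge n$, the $n+1$ integers $\{0,1,\dots,n-1\}\cup\{2^{\,r-2}\}$ are distinct, and I take these as $C$ (with $\0=0$ the augmenting column, so the $n$ true columns $1,\dots,n-1,2^{\,r-2}$ are all nonzero). Distinctness is immediate, and pairwise non-complementarity is automatic since every column has a $0$ in the zero-row coordinate. For the stabilizer, note $2^{\,r-2}$ is the unique element of $C$ with top bit $1$: any $\epsilon$ with $C+\epsilon=C$ must preserve the number of top-bit-one elements, which is $1$, and since $n\ge 4$ this forces $\epsilon$ to have top bit $0$; then $2^{\,r-2}+\epsilon$ must again be the unique top-bit-one element $2^{\,r-2}$, forcing $\epsilon=\0$. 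Lifting $C$ back to $\Z_2^r$ and reading off the $r$ rows gives a determining set of size $r=\lceil\lg n\rceil+2$.

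\textbf{Main obstacle.} The delicate step is the first part of the upper bound: faithfully converting the pointwise relations $a_i=a_\ell$ and $a_i+a_j=a_k$ of~(\ref{eq:phiAction}) into the single statement $C+\epsilon=C$, and checking that this is a genuine bijection between nonidentity fixing permutations and nonzero stabilizing translations. This is the converse of Corollary~\ref{cor:NecessaryXS} and is where the full structure of $\aut(FQ_n)$ enters. Once it is in place, both the hyperplane count for the lower bound and the single-``high''-vector trick that kills every translation symmetry (including the awkward case where $n+1$ is even) are routine.
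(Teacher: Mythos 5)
Your proposal is correct, and while your lower bound coincides with the paper's, your upper bound takes a genuinely different route. The lower bound is exactly the paper's argument: pass to a minimum determining set containing $\0$ via part~\ref{part:DetFactsFolded2} of Lemma~\ref{lem:DetFactsFolded}, invoke Corollary~\ref{cor:NecessaryXS}, and count the $2^{r-1}-1$ available distinct nonzero columns under a zero row. For the upper bound, the paper is much quicker: it takes a minimum determining set $S_Q$ of $Q_n$ containing $\0$, of size $\lceil\lg n\rceil+1$ by Theorem~\ref{thm:DetQn}, and sets $S=S_Q\cup\{\1\}$; any automorphism fixing $S$ pointwise fixes $\0$, hence lies in $S_{n+1}$, and since it fixes the $Q_n$-determining set $S_Q$ it is either the identity or sends some $\e_i\mapsto\1$ --- and the latter cannot fix the vertex $\1$. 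You instead develop an if-and-only-if criterion (automorphisms with $\phi(\e_i)=\1$ fixing $S$ correspond exactly to nonzero translations $\epsilon=c_i$ stabilizing the augmented column set $C=\{c_1,\dots,c_n,\0\}$) and then exhibit explicit columns $\{1,\dots,n-1,2^{r-2}\}$, ruling out stabilizing translations by the unique-top-bit argument. Your criterion is correct --- the translation from equation~(\ref{eq:phiAction}) checks out --- and it is essentially the machinery the paper develops only later (Lemma~\ref{lem:Fixins} and Corollary~\ref{cor:detSetS}) to pin down the exact value in Theorem~\ref{thm:DetFQn}; your translation-stabilizer formulation is arguably cleaner and handles all values of $n$ uniformly, whereas the paper's Corollary~\ref{cor:detSetS} splits into parity and mod-4 cases. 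What the paper's proof buys is brevity, since it needs only the known value of $\det(Q_n)$ and no new construction; what yours buys is self-containedness (you never invoke Theorem~\ref{thm:DetQn}) and a reusable criterion strong enough to recover the sharper Theorem~\ref{thm:DetFQn}. One small imprecision: your criterion should require the $n$ columns to be distinct \emph{and nonzero} (equivalently, that $C$ consists of $n+1$ distinct vectors); a zero column $c_j$ admits the fixing transposition $\e_j\leftrightarrow\1$ even though no nonzero translation stabilizes $C$. Your construction satisfies this stronger hypothesis, so nothing in your argument breaks.
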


\begin{proof}
To prove the upper bound, let $S_Q$ be a minimum determining set for $Q_n$ containing $\0$. By Theorem~\ref{thm:DetQn}, $|S_Q|= \lceil \lg(n)\rceil +1$. The only automorphisms of $Q_n$ that fix $\0$ are permutations of the bits; these are also the only automorphisms that fix $\1$. Hence, by minimality, $\1 \notin S$.  Let $S = S_Q \cup \{ \1 \}$, so that $|S|= \lceil \lg(n)\rceil +2$. It suffices to show that  $S$ is a determining set for $FQ_n$.

Since $S_Q$ is a determining set for $Q_n$ containing $\0$, the only nontrivial automorphism of $FQ_n$ that could fix every vertex in $S$ is of the form $\phi \in S_{n+1}$ where $\phi(\bf e_i) = \1$ for some $i \in [n]$. Since such a $\phi$ does not fix $\1$, it does not fix every vertex in $S$.

Next we prove the lower bound.  
Let $T$ be a determining set for $FQ_n$ containing $\0$ with characteristic matrix $X(T)$. 
By Corollary~\ref{cor:NecessaryXS}, the  characteristic matrix $X(T)$ has a zero row and distinct,  nonzero, columns. This occurs if and only if the columns in the submatrix of $X(T)$ obtained by removing the zero row are distinct, nonzero binary columns of length $|T|-1$.
Since there are only $2^{|T|-1} - 1$ distinct nonzero binary columns of length $|T|-1$, 
$n \le 2^{|T|-1} - 1$, which implies $\lceil \lg(n+1)\rceil +1 \le |T|$. 
\end{proof}

Lemma~\ref{lem:Fixins} will allow us to show that both bounds in  Theorem~\ref{prop:DetFQbounds} are sharp. In what follows, ${\bf col}_i M$ denotes the $i^{th}$ column of the matrix $M$.

\begin{lemma}\label{lem:Fixins}
Let $S \subset V(FQ_n)$, $n \ge 4$, 
and assume that $X(S)$ has a zero row and distinct, nonzero columns.
Suppose $\phi \in S_{n+1}$ satisfies $\phi(\e_i) = 1$.  If $\phi$ fixes every vertex in $S$, then $\phi$ is a product of transpositions that leaves no $\e_j$ fixed, $n$ is odd,
and the sum of the columns of $X(S)$ is
\[
\sum_{s=1}^n {\bf col}_s X(S) = \begin{cases} {\bf col}_i X(S), \quad &\mbox{ if }  n \equiv 1 \pmod 4\\
\0,  &\mbox{ if } n \equiv 3 \pmod 4.
\end{cases}
\]
\end{lemma}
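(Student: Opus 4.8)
The plan is to read the action of $\phi$ directly off equation~(\ref{eq:phiAction}) and then exploit the two hypotheses on $X(S)$ --- distinct columns and nonzero columns --- to pin down the cycle structure of $\phi$. Write $\phi(\1) = \e_\ell$. By the characterization stated just after~(\ref{eq:phiAction}), $\phi$ fixes a vertex $\a$ exactly when $a_\ell = a_i$ and $a_k = a_i + a_j$ for every $j \ne i$ with $\phi(\e_j) = \e_k$. Since $\phi$ fixes every vertex of $S$, these identities hold simultaneously for all $\a \in S$, which is precisely the statement that ${\bf col}_\ell X(S) = {\bf col}_i X(S)$ and ${\bf col}_k X(S) = {\bf col}_i X(S) + {\bf col}_j X(S)$ whenever $\phi(\e_j) = \e_k$. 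Because the columns of $X(S)$ are distinct, the first equation forces $\ell = i$; hence $\phi$ transposes $\e_i$ and $\1$ and permutes the remaining $\e_j$ (with $j \ne i$) among themselves.

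Next I would show that this induced permutation of $\{\e_j : j \ne i\}$ is a fixed-point-free involution. If $\phi$ fixed some $\e_j$ (so $k = j$), the column identity would give ${\bf col}_i X(S) = \0$, contradicting the nonzero-column hypothesis; thus $\phi$ moves every $\e_j$. If instead $\phi(\e_j) = \e_k$ and $\phi(\e_k) = \e_m$, then combining ${\bf col}_k X(S) = {\bf col}_i X(S) + {\bf col}_j X(S)$ with ${\bf col}_m X(S) = {\bf col}_i X(S) + {\bf col}_k X(S)$ yields ${\bf col}_m X(S) = {\bf col}_j X(S)$, and distinctness forces $m = j$. So every cycle of $\phi$ has length at most $2$, and, having no fixed points, $\phi$ is a product of transpositions that leaves no $\e_j$ fixed. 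Since $\phi$ is then a fixed-point-free involution on the $n+1$ symbols $\{\e_1, \dots, \e_n, \1\}$, that count must be even, so $n$ is odd.

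Finally I would compute the column sum by grouping terms according to these transpositions. The pair $\{\e_i, \1\}$ accounts for the index $i$, and the remaining $n-1$ indices split into $(n-1)/2$ transpositions $\{\e_j, \e_k\}$, each contributing ${\bf col}_j X(S) + {\bf col}_k X(S) = {\bf col}_i X(S)$. Summing over all positions therefore gives
\[
\sum_{s=1}^n {\bf col}_s X(S) = {\bf col}_i X(S) + \frac{n-1}{2}\, {\bf col}_i X(S) = \frac{n+1}{2}\, {\bf col}_i X(S) \pmod 2,
\]
and reducing the coefficient modulo $2$ yields ${\bf col}_i X(S)$ when $n \equiv 1 \pmod 4$ and $\0$ when $n \equiv 3 \pmod 4$. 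The crux of the argument is the pair of reductions in the second step: everything hinges on converting the vertex-fixing conditions into column identities and then using distinctness (to force $\ell = i$ and to bound cycle lengths) together with nonzero-ness (to forbid fixed points). Once the involution structure is in hand, the parity of $n$ and the closed form for the column sum are routine bookkeeping.
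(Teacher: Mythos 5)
Your proof is correct and follows essentially the same route as the paper's: both translate the vertex-fixing conditions from equation~(\ref{eq:phiAction}) into $\mathbb{Z}_2$-linear identities among the columns of $X(S)$, use the nonzero-column hypothesis to rule out fixed $\e_j$'s, use distinctness to force $\phi(\1)=\e_i$ and to show every cycle is a transposition, and then sum $\frac{n+1}{2}\,{\bf col}_i X(S)$ modulo $2$. The only difference is cosmetic ordering (you settle $\phi(\1)$ first, the paper last), so no further changes are needed.
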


\begin{proof}
Assume that $\phi$ fixes every vertex in $S= \{ \a_1, \dots, \a_r\}$, 
 where $\a_t =a_{t1} a_{t2} \dots a_{tn}$. For $j \neq i$, suppose $\phi(\e_j) = \e_k$.
 Since $\phi(\a_t) = \a_t$, the $k^{th}$ position of $\phi(\a_t)$ is  $a_{tk}$. On the other hand, by equation~(\ref{eq:phiAction}),  the $k^{th}$ position of $\phi(\a_t)$ is $a_{ti}+ a_{tj}.$ Hence, for all $t$, $a_{ti}+ a_{tj} = a_{tk}$, which equivalent to $a_{ti}= a_{tj} + a_{tk}$. Thus, ${\bf col}_i X(S) = {\bf col}_j X(S) + {\bf col}_k X(S)$. Since the columns of $X(S)$ are nonzero, $j \neq k$, so $\phi$ does not fix any $\e_j$.

Suppose $\phi(\e_k) = \e_m.$ Then by the same reasoning, ${\bf col}_i X(S) = {\bf col}_k X(S) + {\bf col}_m X(S)$. If $m \neq j$, this contradicts of the assumption that $X(S)$ has distinct columns. Hence, $\phi$ must transpose $ \e_j \leftrightarrow \e_k$.

Finally, assume $\phi(\1) = \e_\ell.$ Then by equation (\ref{eq:phiAction}) and the assumption that $\phi(\a_t) = \a_t$, $a_{ti} = a_{t\ell}$ which implies ${\bf col}_i X(S) = {\bf col}_\ell X(S)$. By the assumption that the columns of $X(S)$ are distinct, $i = \ell$, which means that $\phi$ transposes $\1 \leftrightarrow \e_i$.

Thus, as a permutation, $\phi$ is a product of transpositions that leaves no $\e_j$ fixed. In particular, this implies that $n+1$ is even and so $n$ must be odd. For  each transposition $\e_j \leftrightarrow \e_k$, the matrix $X(S)$ satisfies 
$
{\bf col}_i X(S) = {\bf col}_j X(S) + {\bf col}_k X(S).
$
This implies that the sum of the columns of $X(S)$ is
\[
\left(1 + \frac{n-1}{2}\right ) {\bf col}_i X(S) = \left(\frac{n+1}{2}\right) {\bf col}_i X(S) = \begin{cases} {\bf col}_i X(S), \quad &\mbox{ if }  n \equiv 1 \pmod 4\\
\0,  &\mbox{ if } n \equiv 3 \pmod 4.
\end{cases}
\]
\end{proof}

\begin{cor}\label{cor:detSetS}
Let $S$ be a set of vertices of $FQ_n$, $n \ge 4$,  whose characteristic matrix $X(S)$ has a zero row and distinct, nonzero columns. If either
\begin{enumerate}
    \item \label{part:detSetS1} $n$ is even, or
    \item \label{part:detSetS2} $n\equiv 1 \pmod 4$ and the column sum of $X(S)$ is not one of the columns of $X(S)$, or
    \item \label{part:detSetS3} $n\equiv 3 \pmod 4$ and the column sum of $X(S)$ is not $\0$,
\end{enumerate} 
then $S$ is a determining set for $FQ_n$.
\end{cor}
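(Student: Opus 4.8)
The plan is to prove that the identity is the only automorphism of $FQ_n$ fixing every vertex of $S$, treating the three hypotheses uniformly as negations of the conclusions of Lemma~\ref{lem:Fixins}. Since $X(S)$ has a zero row, $\0 \in S$, and as observed after Lemma~\ref{lem:DetFactsFolded} any $\sigma = \rho_\c \circ \phi \in \aut(FQ_n)$ fixing $\0$ must satisfy $\c = \0$, so $\sigma = \phi \in S_{n+1}$. Hence it suffices to show that every $\phi \in S_{n+1}$ that fixes all of $S$ is trivial, and I would split on whether $\phi$ fixes $\1$.

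If $\phi(\1) = \1$, then $\phi$ merely permutes the $n$ bit positions. Reading off the action on vertices, if $\phi$ carries position $j$ to position $k$ then fixing every vertex of $S$ forces ${\bf col}_j X(S) = {\bf col}_k X(S)$; because the columns of $X(S)$ are distinct by hypothesis, $\phi$ fixes every position and is the identity. This is the step where I must take care to use distinctness of the columns rather than mere nonisomorphism, which is exactly why Corollary~\ref{cor:NecessaryXS} is phrased with ``distinct.''

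The remaining case is $\phi(\e_i) = \1$ for some $i \in [n]$, and here Lemma~\ref{lem:Fixins} does all the work. Suppose, for contradiction, that such a $\phi$ fixes every vertex of $S$. The lemma forces $n$ to be odd and pins down the column sum $\sum_{s=1}^n {\bf col}_s X(S)$, which equals ${\bf col}_i X(S)$ when $n \equiv 1 \pmod 4$ and $\0$ when $n \equiv 3 \pmod 4$. Each listed hypothesis directly contradicts one of these conclusions: $n$ even contradicts ``$n$ odd''; the column sum failing to be a column of $X(S)$ contradicts ``$= {\bf col}_i X(S)$''; and the column sum being nonzero contradicts ``$= \0$.'' So in every case no such $\phi$ exists, and combined with the previous paragraph the only automorphism fixing $S$ is the identity, i.e.\ $S$ is a determining set for $FQ_n$.

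I do not expect a genuine obstacle here, since Lemma~\ref{lem:Fixins} has already isolated the only dangerous automorphisms and extracted the arithmetic conditions under which they can fix $S$; the corollary is essentially its contrapositive, repackaged across the three residues of $n \bmod 4$. The only points demanding attention are closing off the bit-permutation case via distinctness of columns, and confirming that the three hypotheses exhaust precisely the situations in which the conclusion of Lemma~\ref{lem:Fixins} is blocked.
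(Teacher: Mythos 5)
Your proof is correct and follows essentially the same route as the paper's: both reduce to automorphisms $\phi \in S_{n+1}$ via $\0 \in S$, dispose of the case $\phi(\1)=\1$, and then let Lemma~\ref{lem:Fixins} contradict each of the three hypotheses. The only cosmetic difference is that you handle the bit-permutation case by a direct column-distinctness argument, whereas the paper cites Theorem~\ref{thm:CharMatrix} to conclude $S$ is a determining set for $Q_n$ (implicitly using that a zero row rules out complementary, hence isomorphic, columns).
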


\begin{proof}
By Theorem~\ref{thm:CharMatrix}, $S$ is a determining set for $Q_n$. Hence, if $S$ is not a determining set for $FQ_n$, then there must exist $\phi \in S_{n+1}$, with $\phi(\e_i) = 1$ for some $i \in [n]$ that fixes every vertex in $S$. Now apply Lemma~\ref{lem:Fixins}.
\end{proof}

\begin{thm}\label{thm:DetFQn}
 The determining number of the folded cube is
 \[
 \det(FQ_n) = \begin{cases}
 1, \quad \quad &\mbox{ if } n=1,\\
 3, &\mbox{ if } n=2,\\
 6, &\mbox{ if } n=3,\\
 \lceil \lg(n)\rceil + 2, &\mbox{ if } n = 2^m-1 \text{ or } 2^m-3 \text{ for } m\ge 3,\\
 \lceil \lg(n+1)\rceil + 1, & \text{ otherwise.}
 \end{cases}
 \]
\end{thm}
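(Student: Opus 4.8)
The plan is to treat $n \le 3$ as already recorded (the values $1,3,6$ are stated just before the theorem) and to focus on $n \ge 4$. For such $n$, Proposition~\ref{prop:DetFQbounds} confines $\det(FQ_n)$ to the two values $\lceil \lg(n+1)\rceil + 1$ and $\lceil \lg(n)\rceil + 2$. I would first observe that these coincide exactly when $n$ is a power of $2$: then $\lceil \lg(n+1)\rceil = \lceil \lg n\rceil + 1$, so both equal $\lceil \lg n\rceil + 2 = \lceil \lg(n+1)\rceil + 1$, and since no power of $2$ that is at least $4$ has the form $2^m-1$ or $2^m-3$, powers of $2$ fall correctly under the ``otherwise'' branch. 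For every other $n \ge 4$ we have $\lceil \lg(n+1)\rceil = \lceil \lg n\rceil =: q$, so the two bounds are the consecutive integers $q+1$ and $q+2$, and the whole content of the theorem is to decide, for each such $n$, whether the lower bound $q+1$ is attained.

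To make that decision I would reduce to a combinatorial question about columns. By part~\ref{part:DetFactsFolded2} of Lemma~\ref{lem:DetFactsFolded} it suffices to consider determining sets containing $\0$, and by Corollary~\ref{cor:NecessaryXS} the characteristic matrix of such a set has a zero row together with distinct, nonzero columns of length $q$. Thus a determining set of size $q+1$ exists if and only if there is a family $C$ of $n$ distinct nonzero vectors of $\mathbb{Z}_2^q$ meeting the applicable hypothesis of Corollary~\ref{cor:detSetS}. Since $n$ is not a power of $2$, $n \le 2^q-1$, so there is always room for $n$ distinct nonzero columns; the only issue is the column-sum condition, which depends on $n \bmod 4$.

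For the achievability direction I would argue by the residue of $n$ modulo $4$, writing $\sigma(C)$ for the sum of the columns in $C$ and using that the sum of all $2^q-1$ nonzero vectors is $\0$. If $n$ is even, part~\ref{part:detSetS1} of Corollary~\ref{cor:detSetS} imposes no condition and any $C$ works. If $n \equiv 3 \pmod 4$ and $n \neq 2^q-1$, part~\ref{part:detSetS3} asks only that $\sigma(C) \neq \0$; a single swap (replacing $x \in C$ by some $y \notin C$ changes the sum by $x+y \neq \0$) shows not every $n$-subset is zero-sum, so a valid $C$ exists. If $n \equiv 1 \pmod 4$ and $n \neq 2^q-3$, part~\ref{part:detSetS2} asks that $\sigma(C) \notin C$, which I would secure by choosing $C$ to be a zero-sum set, giving $\sigma(C)=\0 \notin C$. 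The supporting fact is that $\mathbb{Z}_2^q$ contains $t$ distinct nonzero vectors summing to $\0$ for every $t$ except $t \in \{1,2,2^q-3,2^q-2\}$; I would prove this by starting from a zero-sum triple $\{a,b,a+b\}$, incrementing the size by replacing a vector $z$ with a disjoint pair summing to $z$, and then using complementation (the complement of a zero-sum set is zero-sum) to reach the large sizes. One checks that the non-exceptional $\equiv 1$ values of $n$ avoid the forbidden sizes, while $2^q-3$ is forbidden, which is exactly the dividing line.

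The heart of the proof, and the step I expect to be the main obstacle, is the impossibility direction for $n = 2^m-1$ and $n = 2^m-3$, where $q=m$. When $n = 2^q-1$ the columns must be all of $\mathbb{Z}_2^q \setminus \{\0\}$, forcing $\sigma(C)=\0$; when $n=2^q-3$ one omits exactly two nonzero vectors $u,v$, and since $u+v \notin \{\0,u,v\}$ the forced sum $\sigma(C)=u+v$ is always a present column. In both cases Corollary~\ref{cor:detSetS} does not apply, but because that corollary only gives sufficiency I must exhibit an actual nontrivial automorphism fixing $C$. I would build the permutation $\phi \in S_{n+1}$ whose structure is dictated by Lemma~\ref{lem:Fixins}: choose an index $i$ and let $w = {\bf col}_i$ serve as the common pair-sum (any nonzero $w$ works when $n=2^q-1$, while $w=u+v$ is forced when $n=2^q-3$), set $\phi$ to transpose $\1 \leftrightarrow \e_i$, and pair the remaining $\e_j$ by the fixed-point-free involution ${\bf col}_j \mapsto {\bf col}_j + w$. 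One verifies this involution preserves the prescribed column family and that each transposition $\e_j \leftrightarrow \e_k$ satisfies ${\bf col}_j + {\bf col}_k = w = {\bf col}_i$, which is precisely the condition for $\phi$ to fix every vertex of $C$. This produces a nontrivial automorphism fixing the set, so no determining set of size $q+1$ exists and $\det(FQ_n)=q+2=\lceil \lg n\rceil + 2$. Combining the parity cases with the power-of-$2$ observation then yields the stated piecewise formula; the delicate points to get right are the exact identification of the forbidden zero-sum sizes (so that the exceptions are precisely $2^m-1$ and $2^m-3$) and the check that the involution construction stays inside the prescribed column family.
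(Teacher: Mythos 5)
Your proposal is correct, and its overall skeleton matches the paper's: both use Proposition~\ref{prop:DetFQbounds} to pin $\det(FQ_n)$ between two consecutive values, reduce via Lemma~\ref{lem:DetFactsFolded} and Corollary~\ref{cor:NecessaryXS} to characteristic matrices with a zero row and distinct nonzero columns, and settle the exceptional cases $n=2^m-1,\,2^m-3$ by constructing exactly the same nontrivial automorphism (transpose $\1 \leftrightarrow \e_i$ and pair $\e_j \leftrightarrow \e_k$ whenever ${\bf col}_j + {\bf col}_k = {\bf col}_i$, with ${\bf col}_i = u+v$ forced in the $2^m-3$ case). Where you genuinely diverge is the achievability direction for odd, non-exceptional $n$. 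The paper writes $n = 2^{m-1}+q$ and exhibits one explicit $(m+1)\times n$ matrix, built from the nonzero vectors of $\Z_2^{m-1}$ paired so that $\c_{2t}+\c_{2t+1}=\1$, then computes its column sum directly to invoke Corollary~\ref{cor:detSetS}. You instead argue existentially by residue mod $4$: a swap argument for $n \equiv 3 \pmod 4$ (not every $n$-subset can be zero-sum), and for $n \equiv 1 \pmod 4$ a lemma that $\Z_2^q$ admits zero-sum sets of $t$ distinct nonzero vectors for all $t \notin \{1,2,2^q-3,2^q-2\}$, proved by an increment-and-complement construction. Your route costs you that extra supporting lemma (your sketch of it is correct: the increment works while $t < 2^{q-1}$, and complementation covers the top range), but it buys a conceptually cleaner picture — it makes transparent that $2^m-1$ and $2^m-3$ are exceptional precisely because they are the sizes at which the column-sum condition is unachievable, rather than this emerging as a failure of one particular construction. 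One small caution: your early claim that a size-$(q+1)$ determining set exists ``if and only if'' some family satisfies the hypotheses of Corollary~\ref{cor:detSetS} overstates what that corollary gives (it is only a sufficient condition); you correctly repair this later by exhibiting explicit automorphisms in the exceptional cases, but the ``only if'' phrasing should be dropped or restated as what you actually prove.
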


 \begin{proof} The cases where $n\le 3$ were discussed at the beginning of this section, so in the remainder of this proof we assume $n \ge 4.$

First we consider the case where $n$ is even. For simplicity, let $m = \lceil \lg(n+1)\rceil$. Then we define a binary $(m+1) \times n$ matrix by making the top row all 0's and filling in the remaining entries by choosing $n$ distinct nonzero binary columns of length $m$. Since $n \le 2^m-1$, this is possible.  We interpret $X$ as the characteristic matrix of a set $S$ of vertices of $FQ_n$, and apply part~\ref{part:detSetS1} of Corollary~\ref{cor:detSetS}.
 
 Next we consider the case where $n$ is odd, but $n$ is not of the form $2^m-1$ or $2^m-3$.
 We can write $n = {2^{m-1} + q}$, where $q$ is odd and $q \le 2^{m-1} - 5$. Since we are assuming $m \ge 3$, $n \equiv q \pmod 4$.  Let $\c_1,\ldots,\c_{2^{m-1}-1}$ be the distinct, nonzero binary vectors of length $m-1$; label these so that $\c_1 = \1$ and $\c_{2t} + \c_{2t+1} = \1$ for $t \ge 1$. This implies  
\begin{equation}\label{eqn:sumci}
 \sum_{i=1}^q \c_i= \begin{cases}
 \1, \quad &\text{if } q \equiv 1 \pmod 4,\\
 \0,  &\text{if } q \equiv 3 \pmod 4.
 \end{cases}
\end{equation}

 Let $X$ be the $(m+1) \times n$ binary matrix 
 \[
X = \begin{bmatrix} 0 & 0 & \dots & 0 & 0 & 0 & 0 & \dots & 0 & 0 \\
1 & 1  & \dots & 1 & 0 & 0 & 0 & \dots & 0 & 0 \\
\c_1 & \c_2  & \dots & \c_{2^{m-1}-2} & \c_{2^{m-1}-1} & \c_1 & \c_2 & \dots & \c_q &  \c_{q+1}
\end{bmatrix}.
\]
Since $\sum_{i=1}^{2^{m-1}-1} \c_i = \0$, using Equation~\ref{eqn:sumci}, the sum of the columns of $X$ is
\[
\sum_{i=1}^n {\bf col}_i X = \begin{cases} 
\begin{bmatrix} 0 \\ 0 \\ \c_{q+2} \end{bmatrix}, \quad \text{if } q \equiv 1 \pmod 4,\\
\\
\begin{bmatrix} 0 \\ 0 \\ \c_{q+1} \end{bmatrix}, \quad \text{if } q \equiv 3 \pmod 4.\\
\end{cases}
\]

By Corollary~\ref{cor:detSetS}, $X$ is the characteristic matrix of a determining set for $FQ_n$ of size $m+1$. Thus, $\det(FQ_n)\le m+1$ and by Theorem~\ref{prop:DetFQbounds}, $\det(FQ_n)=m+1$.

The argument above will not work in the case  $n=2^m-1$ because then $q = 2^{m-1}-1$ and so there is no nonzero, binary vector $\c_{q+1}$ of length $m-1$ to put into matrix $X$. If $n=2^m-3$, then $q = 2^{m-1}-3$ and so $q+2 = 2^{m-1}-1$. This means that the sum of the columns of $X$ is itself a column of $X$ and so we cannot apply Lemma~\ref{lem:Fixins}.
 
 Finally, we show that if $n \in\{ 2^m-1, 2^m - 3\}$, where $m \ge 3$, then $\det(FQ_n)$ is $\lceil\lg n\rceil + 2$, the upper bound in Propostion~\ref{prop:DetFQbounds}. For these values of $n$, the lower and upper bounds in Proposition~\ref{prop:DetFQbounds} are  $\lceil \lg(n+1)\rceil + 1 = m+1$ and $\lceil \lg(n)\rceil + 2 = m+2$, respectively. Since these differ by 1, we need only show that $\det(FQ_n) > m+1$.

Suppose $n = 2^m-1.$ Let $S$ be a determining set for $FQ_n$ of size $m+1$ with first element  $\0$. By Corollary~\ref{cor:NecessaryXS},  the  characteristic matrix $X(S)$ has top row all 0's  and distinct, nonzero columns. Since $n = 2^m-1$, the last $m$ rows of $X(S)$ must include every possible nonzero binary $m$-vector. In particular, there must be $i \in [n]$ such that 
\[
{\bf col}_i X(S) 
= \begin{bmatrix} 0 \\ \1 \end{bmatrix}.
\]
For every nonzero binary vector $\c_j\neq \1$ of length $m$, there is a unique nonzero  $\c_k$ such that $\c_j + \c_k = \1 = \c_i$. 
Assuming that ${\bf col}_j X(S) = \begin{bmatrix} 0 \\ \c_j\end{bmatrix}$, the remaining columns of $X(S)$ can be paired up so that
\[
{\bf col}_j X(S) + {\bf col}_k X(S) = {\bf col}_i X(S).
\]
 Let $\phi\in S_{n+1}$ be defined by the transpositions $\bf e_i \leftrightarrow 1$ and $\bf e_j \leftrightarrow e_k$ whenever $\c_j + \c_k = \1 = \c_i$.  By equation (\ref{eq:phiAction}), this nontrivial  $\phi$ fixes all vertices in $S$ and so $S$ cannot be a determining set. 

Now suppose $n = 2^m-3$.
Again, by way of contradiction, suppose that $S$ is a determining set of $FQ_n$ of size $m+1$ with first element $\0$, implying that the characteristic matrix $X(S)$ has top row all 0's and distinct, nonzero columns. Let ${\c_1, \dots, \c_n} \in \mathbb Z_2^m$ denote the columns of $X(S)$ with the top row of 0's removed. Let $\c_{n+1}, \c_{n+2}$ denote the remaining two nonzero binary $m$-vectors. Then 
 \[
 \0 = \c_1 + \dots + \c_{2^m-1} = \left (\sum_{j=1}^{n} \c_j\right ) + \c_{n+1} + \c_{n+2}.
 \]
 Since $\mathbb Z_2^m$ is closed under addition, $\sum_{j=1}^{n} {\c_j} \in \mathbb Z_2^m$. We will show that $\sum_{j=1}^n {\c_j} \in \mathbb Z_2^m$  must be one of $\c_1, \dots , \c_n$ by a process of elimination. 
 \begin{itemize}
\item If $\sum_{j=1}^n \c_j = \0$, then $\c_{n+1}+ \c_{n+2} = \0$, which implies $\c_{n+1} = \c_{n+2}$, a contradiction.
\item If $\sum_{j=1}^n \c_j = \c_{n+1}$, then $2\c_{n+1}+\c_{n+2} = \0$, which implies $\c_{n+2} = \0$, a contradiction. Analogously $\sum_{j=1}^{n} \c_j = \c_{n+2}$ implies $\c_{n+1} = \0$, a contradiction.
\end{itemize}

 Thus, $\sum_{j=1}^{n} \c_j = \c_i$ for some $i \in [n]$. For each $j \ne i$, there is a unique $k\ne j$ such that $\c_k = \c_i + \c_j$. 
 In this way, addition by $\c_i$ is associated with the transposition $\c_j \leftrightarrow \c_k$. 
 In particular, since $\c_i + \c_{n+1} + \c_{n+2} = \0$, $\c_{n+1}$ and $\c_{n+2}$ are transposed, which implies that elements of $\{\c_1, \dots, \c_{i-1}, \c_{i+1} \dots, \c_n\}$ are transposed with each other.

 Let $\phi\in S_{n+1}$ be defined by $\phi(\e_i) = \1$, $\phi(\1) = \bf e_i$ and for $j \ne i$,  $\phi(\e_j) = \e_k$ iff $\c_k = \c_i + \c_j$. Then $\phi$ fixes every vertex in $S$ and so $S$ cannot be a determining set for $FQ_n$.
 \end{proof}

To find the distinguishing number of $FQ_n$, we use Lemma~\ref{lem:inducedsg}, and a determining set for $FQ_n$.

\begin{thm}\label{thm:distFQn}
The distinguishing number of the folded cube is
\[ \dist (FQ_n) = \begin{cases}
 2, & \mbox{ if }n=1,\\
 4, & \mbox{ if }n=2,\\
 5, & \mbox{ if }n=3,\\
 2, & \mbox{ if }n\geq 4.
 \end{cases}  \]
\end{thm}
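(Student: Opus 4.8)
The cases $n\le 3$ are recorded at the start of the section (there $FQ_1=K_2$, $FQ_2=K_4$, and $FQ_3=K_{4,4}$), so the substance is the claim $\dist(FQ_n)=2$ for $n\ge 4$. The plan is to produce a $2$-distinguishing coloring by invoking Lemma~\ref{lem:inducedsg}: it suffices to exhibit a determining set $T$ for $FQ_n$ whose induced subgraph $FQ_n[T]$ is asymmetric. The lower bound $\dist(FQ_n)\ge 2$ is immediate, since $FQ_n$ is vertex-transitive (Table~\ref{tab:trans}) and hence has a nontrivial automorphism, so no $1$-coloring can be distinguishing.

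For the upper bound I would reuse the monotone staircase $\U_0,\U_1,\dots,\U_{n-1}$. Because $h(\U_a,\U_b)=|a-b|\le n-1<n$ for all $a,b$ in this range, none of these pairs realizes the distance-$n$ adjacency of $FQ_n$, so $FQ_n[\{\U_0,\dots,\U_{n-1}\}]$ is exactly the path $P_n$ — the same induced subgraph as in $Q_n$. This path is symmetric, so, as in the proof of Theorem~\ref{thm:distdetpowers}, I would adjoin one further vertex to break its reflection. The key difference from the hypercube case is that the $\U_i$ never place a $1$ in coordinate $n$, so the extra vertex must also repair this gap; otherwise the transposition $\e_n\leftrightarrow\1$ fixes the whole set and $T$ fails to be determining. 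A convenient choice is ${\bf w}=\U_i+\e_n$ for a suitable interior index $i$: a direct Hamming-distance computation gives $h({\bf w},\U_j)=|i-j|+1$, so for $1\le i\le n-2$ the vertex ${\bf w}$ is adjacent to $\U_i$ alone and to no other $\U_j$ (neither at distance $1$ nor at distance $n$), while the coordinate-$n$ entry of ${\bf w}$ removes the zero column. Thus, for $T=\{\U_0,\dots,\U_{n-1},{\bf w}\}$, the graph $FQ_n[T]$ is the path $P_n$ with a single pendant hung at $\U_i$.

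To finish I would verify two things. First, that $T$ really is a determining set: its characteristic matrix has the zero row $\U_0$ and distinct, nonzero columns, so by Theorem~\ref{thm:CharMatrix} it determines $Q_n$, and I would check the relevant column-sum hypothesis of Corollary~\ref{cor:detSetS} (automatic when $n$ is even, and a short parity check in the two odd residue classes) to upgrade this to $FQ_n$. Second, that the tree $FQ_n[T]$ is asymmetric: its branch vertex $\U_i$ has three arms of lengths $i$, $n-1-i$, and $1$, and such a tree is rigid exactly when these three lengths are pairwise distinct, which holds for $i=2$ as soon as $n\ge 6$. Granting asymmetry, Lemma~\ref{lem:inducedsg} makes $T$ a color class of a $2$-distinguishing coloring, giving $\dist(FQ_n)=2$.

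The main obstacle is the small cases $n\in\{4,5\}$: there is no interior index $i$ making the three arm lengths distinct (for $n=4$ no interior index exists, and for $n=5$ the forced choice $i=2$ yields two arms of equal length), so a single pendant cannot rigidify the path and the attempted $FQ_n[T]$ retains a reflection. I would handle these by a tailored gadget — replacing the single pendant by a short attached path chosen so that the three arms at the branch vertex again have pairwise distinct lengths (e.g.\ a length-$2$ tail at $\U_1$ for $n=5$, giving arms $1,2,3$) — and then re-verify both rigidity and the determining property for these finitely many graphs, the latter being a small direct computation since $FQ_4$ and $FQ_5$ have only $16$ and $32$ vertices. Once each of these cases furnishes a determining set with asymmetric induced subgraph, Lemma~\ref{lem:inducedsg} again yields $\dist(FQ_n)=2$, completing the argument for all $n\ge 4$.
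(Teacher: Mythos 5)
Your proposal is correct, and it works inside the same framework as the paper---both arguments reduce to Lemma~\ref{lem:inducedsg} by exhibiting a determining set whose induced subgraph is asymmetric---but your construction is genuinely different and in fact simpler. The paper starts from a minimum-size determining set for $FQ_n$, namely Boutin's logarithmic-size set $\{\V_0,\dots,\V_r\}\cup\{\1\}$, and then must adjoin the many intermediate vertices of bit-flip paths joining the $\V_i$ to make the induced subgraph connected; verifying that this yields an induced tree (a subdivision of $K_{1,3}$) requires the four-case chord analysis (i)--(iv), plus an end-game fix when two legs of the tree have equal length, and the cases $4\le n\le 8$ are settled by explicit figures. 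You instead start from the already-connected staircase $\{\U_0,\dots,\U_{n-1}\}$, which induces $P_n$ in $FQ_n$ (no folded edges occur, since all pairwise Hamming distances are at most $n-1$), and add the single vertex ${\bf w}=\U_2+\e_n$, which simultaneously kills the transposition $\e_n\leftrightarrow\1$ (cf.\ part~\ref{part:DetFactsFolded3} of Lemma~\ref{lem:DetFactsFolded}) and breaks the reflection of the path: the induced subgraph is a spider with legs of lengths $2$, $n-3$, $1$, which are pairwise distinct for $n\ge 6$. The verification you defer does go through: $X(T)$ has the zero row of $\U_0$ and distinct, nonzero columns, and its column sum is the vector of row-weight parities, which reads $0,1,0$ in its first three rows; since every column of $X(T)$ is constant-then-constant of the form $0^j1^{n-j}$ in its first $n$ rows, the column sum is neither $\0$ nor equal to any column, so all three cases of Corollary~\ref{cor:detSetS} apply. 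Your treatment of $n\in\{4,5\}$ by finite tailored constructions is legitimate and parallels the paper's own use of figures; your $n=5$ gadget (legs $1,3,2$ at $\U_1$) checks out, while for $n=4$ genuine care is needed---the naive length-3 tail at $\U_1$ runs into $\1$, which is folded-adjacent to $\U_0$, or into other folded edges---but a direct search succeeds, as the paper's Figure~\ref{fig:distFQcases} shows. One dividend of your route worth noting: your color class has size $n+1$, so it would improve Corollary~\ref{cor:costfolded} from $\rho(FQ_n)=O(n\lg n)$ to $\rho(FQ_n)\le n+1$.
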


\begin{proof}
The cases $n \le 3$ are discussed at the beginning of this section, so suppose that $n \geq 4$.

By Lemma~\ref{lem:inducedsg}, we can show that $FQ_n$ has distinguishing number 2 by finding a determining set $S$ for $FQ_n$ for which $FQ_n[S]$ is an asymmetric subgraph. Note that $S$ need not be a minimal determining set. Such induced subgraphs for the cases $4 \leq n \leq 8$ are displayed in Figure~\ref{fig:distFQcases}. So $\dist(FQ_n)=2$ for $4\le n\le 8$. In what follows, we generalize the construction shown for the case $n=8$.

\begin{figure}[!h]
    \centering
    \begin{tikzpicture}[scale=1.2]
    \draw[line width = 1] (0,0) -- (5,0);
    \draw[line width = 1] (2,0) -- (2,1);
    \draw[fill=black!100,line width=1] (2,1) circle (.1);
    \draw[fill=black!100,line width=1] (0,0) circle (.1);
    \draw[fill=black!100,line width=1] (1,0) circle (.1);
    \draw[fill=black!100,line width=1] (2,0) circle (.1);
    \draw[fill=black!100,line width=1] (3,0) circle (.1);
    \draw[fill=black!100,line width=1] (4,0) circle (.1);
    \draw[fill=black!100,line width=1] (5,0) circle (.1);
    \draw (0,-.3) node{$1111$};
    \draw (1,-.3) node{$0000$};
    \draw (2,-.3) node{$1000$};
    \draw (3,-.3) node{$1010$};
    \draw (4,-.3) node{$0010$};
    \draw (5,-.3) node{$0110$};
    \draw (2,1.3) node{$1100$};
\begin{scope}[shift={(7,0)}]
    \draw[line width = 1] (0,0) -- (5,0);
    \draw[line width = 1] (4,0) -- (4,1);
    \draw[line width = 1] (5,1) -- (4,1);
    \draw[fill=black!100,line width=1] (5,1) circle (.1);
    \draw[fill=black!100,line width=1] (4,1) circle (.1);
    \draw[fill=black!100,line width=1] (0,0) circle (.1);
    \draw[fill=black!100,line width=1] (1,0) circle (.1);
    \draw[fill=black!100,line width=1] (2,0) circle (.1);
    \draw[fill=black!100,line width=1] (3,0) circle (.1);
    \draw[fill=black!100,line width=1] (4,0) circle (.1);
    \draw[fill=black!100,line width=1] (5,0) circle (.1);
    \draw (0,-.3) node{$10101$};
    \draw (1,-.3) node{$10001$};
    \draw (2,-.3) node{$11001$};
    \draw (3,-.3) node{$11101$};
    \draw (4,-.3) node{$11111$};
    \draw (5,-.3) node{$11110$};
    \draw (4,1.3) node{$00000$};
    \draw (5,1.3) node{$01000$};
\end{scope}
\begin{scope}[shift={(0,-3)}]
    \draw[line width = 1] (0,0) -- (10.5,0);
    \draw[line width = 1] (7.5,0) -- (7.5,1);
    \draw[fill=black!100,line width=1] (7.5,1) circle (.1);
    \draw[fill=black!100,line width=1] (0,0) circle (.1);
    \draw[fill=black!100,line width=1] (1.5,0) circle (.1);
    \draw[fill=black!100,line width=1] (3,0) circle (.1);
    \draw[fill=black!100,line width=1] (4.5,0) circle (.1);
    \draw[fill=black!100,line width=1] (6,0) circle (.1);
    \draw[fill=black!100,line width=1] (7.5,0) circle (.1);
    \draw[fill=black!100,line width=1] (9,0) circle (.1);
    \draw[fill=black!100,line width=1] (10.5,0) circle (.1);
    \draw (0,-.3) node{$101010$};
    \draw (1.5,-.3) node{$100010$};
    \draw (3,-.3) node{$110010$};
    \draw (4.5,-.3) node{$110011$};
    \draw (6,-.3) node{$111011$};
    \draw (7.5,-.3) node{$111111$};
    \draw (9,-.3) node{$111101$};
    \draw (10.5,-.3) node{$111100$};
    \draw (7.5,1.3) node{$000000$};
\end{scope}

\begin{scope}[shift={(0,-6)}]
    \draw[line width = 1] (0,0) -- (12,0);
    \draw[line width = 1] (9,0) -- (9,1);
    \draw[line width = 1] (12,1) -- (9,1);
    \draw[fill=black!100,line width=1] (9,1) circle (.1);
    \draw[fill=black!100,line width=1] (10.5,1) circle (.1);
    \draw[fill=black!100,line width=1] (12,1) circle (.1);
    \draw[fill=black!100,line width=1] (0,0) circle (.1);
    \draw[fill=black!100,line width=1] (1.5,0) circle (.1);
    \draw[fill=black!100,line width=1] (3,0) circle (.1);
    \draw[fill=black!100,line width=1] (4.5,0) circle (.1);
    \draw[fill=black!100,line width=1] (6,0) circle (.1);
    \draw[fill=black!100,line width=1] (7.5,0) circle (.1);
    \draw[fill=black!100,line width=1] (9,0) circle (.1);
    \draw[fill=black!100,line width=1] (10.5,0) circle (.1);
    \draw[fill=black!100,line width=1] (12,0) circle (.1);
    \draw (0,-.3) node{$1010101$};
    \draw (1.5,-.3) node{$1110101$};
    \draw (3,-.3) node{$1100101$};
    \draw (4.5,-.3) node{$1100111$};
    \draw (6,-.3) node{$1100110$};
    \draw (7.5,-.3) node{$1110110$};
    \draw (9,-.3) node{$1111110$};
    \draw (10.5,-.3) node{$1111100$};
    \draw (12,-.3) node{$1111000$};
    \draw (9,1.3) node{$1111111$};
    \draw (10.5,1.3) node{$0000000$};
    \draw (12,1.3) node{$1000000$};
\end{scope}

\begin{scope}[shift={(0,-9)}]
    \draw[line width = 1] (0,0) -- (12,0);
    \draw[line width = 1] (9,0) -- (9,1);
    \draw[line width = 1] (12,1) -- (9,1);
    \draw[fill=black!100,line width=1] (9,1) circle (.1);
    \draw[fill=black!100,line width=1] (10.5,1) circle (.1);
    \draw[fill=black!100,line width=1] (12,1) circle (.1);
    \draw[fill=black!100,line width=1] (0,0) circle (.1);
    \draw[fill=black!100,line width=1] (1.5,0) circle (.1);
    \draw[fill=black!100,line width=1] (3,0) circle (.1);
    \draw[fill=black!100,line width=1] (4.5,0) circle (.1);
    \draw[fill=black!100,line width=1] (6,0) circle (.1);
    \draw[fill=black!100,line width=1] (7.5,0) circle (.1);
    \draw[fill=black!100,line width=1] (9,0) circle (.1);
    \draw[fill=black!100,line width=1] (10.5,0) circle (.1);
    \draw[fill=black!100,line width=1] (12,0) circle (.1);
    \draw (0,-.3) node{$10101010$};
    \draw (1.5,-.3) node{$11101010$};
    \draw (3,-.3) node{$11001010$};
    \draw (4.5,-.3) node{$11001110$};
    \draw (6,-.3) node{$11001100$};
    \draw (7.5,-.3) node{$11101100$};
    \draw (9,-.3) node{$11111100$};
    \draw (10.5,-.3) node{$11110100$};
    \draw (12,-.3) node{$11110000$};
    \draw (9,1.3) node{$11111110$};
    \draw (10.5,1.3) node{$11111111$};
    \draw (12,1.3) node{$00000000$};
\end{scope}

    \end{tikzpicture}
    \caption{Asymmetric induced subgraphs of $FQ_n$ that contain a determining set $S$, for $n=4,5,6,7,8$.}
    \label{fig:distFQcases}
\end{figure}
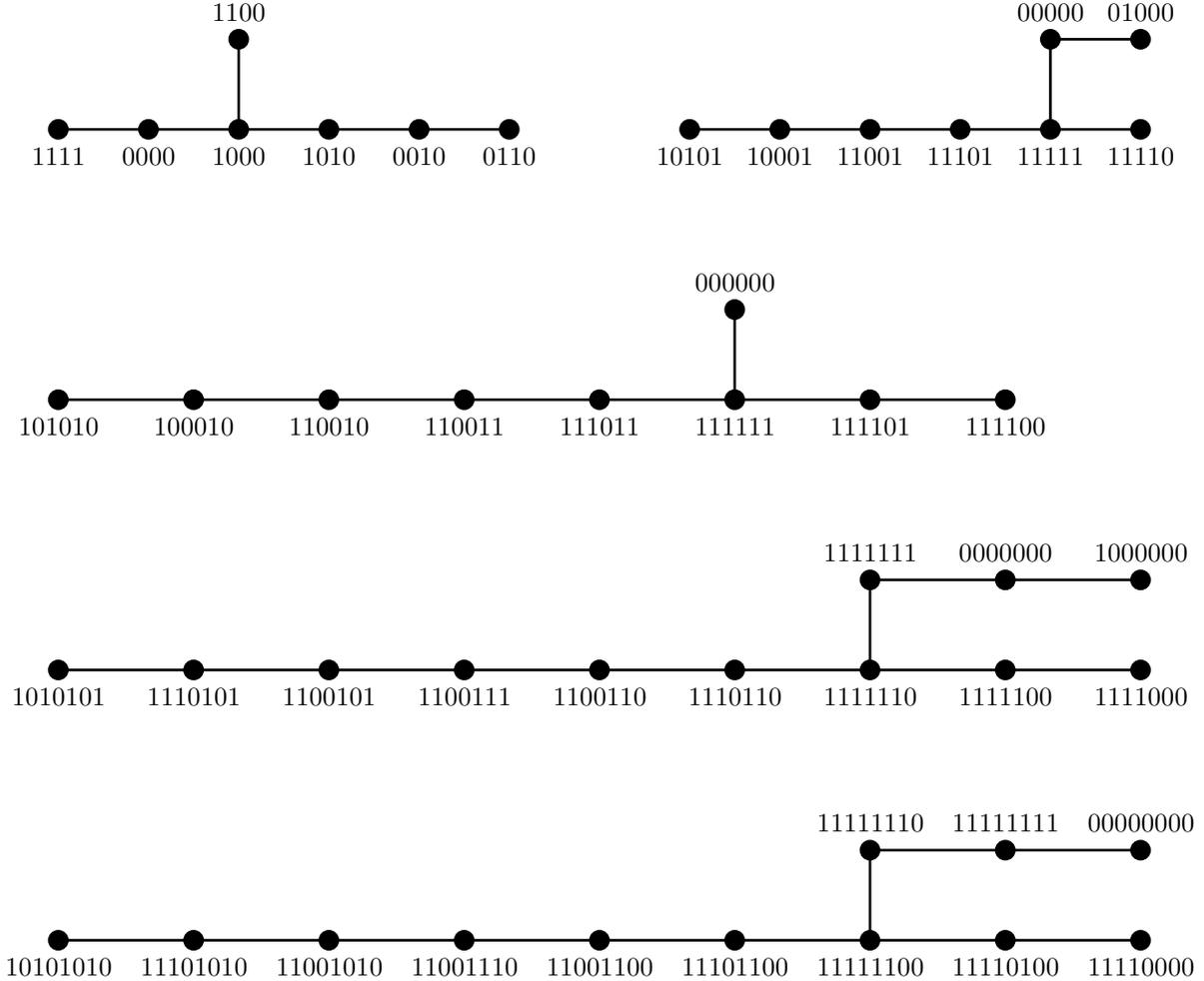

Assume $n \ge 8.$ Let $S_Q$ be a minimum determining set for $Q_n$ containing $\0$. Note that such a set does not contain $\1$. By the proof of Proposition~\ref{prop:DetFQbounds}, $S = S_Q \cup \{ \1 \}$ is a determining set for $FQ_n$ 
We will make use of the determining set $S_Q$ described in Theorem 3 in~\cite{Bo2009a}. First let $n=2^r$. For $1\leq i \leq r$, let $\V_i$ be the length $n$ vector with 1's in positions $1 \leq t2^i+s \leq n$ where $1\leq s \leq 2^{i-1}$ and $0 \leq t \leq 2^{r-i}-1$ and 0's elsewhere. In other words, $\V_i$ is a length $n=2^r$ vector that alternates $2^{i-1}$ consecutive 1's with $2^{i-1}$ consecutive 0's. If $2^{r-1}<n<2^r$, we let $\V_i$ be as described for $n=2^r$, but we discard the last $2^r-n$ positions  to obtain a vector of length $n$. Finally, we let $\V_0 = \0$. 
By~\cite{Bo2009a}, the set $S = \{\V_0,\V_1,\dots,\V_r\}$ is a determining set for $Q_n$, where $r = \lceil \lg n\rceil$.
For example, if $n=8$, then $r = 3$ and we have $\V_0=00000000$, $\V_1=10101010$, $\V_2=11001100$, and $\V_3=11110000$. If $n=10$, then $r = 4$ and we have $\V_0=0000000000$, $\V_1=1010101010$, $\V_2=1100110011$,  $\V_3=1111000011$ and $\V_4=1111111100$.

Next, we will find a superset $S'$ of $S$ such that the induced subgraph $FQ_n[S']$ is asymmetric. 
First, note that every vector in the set $S\setminus \{\V_0\} = \{\V_1,\dots,\V_r \}$ has  1 in its first position, and therefore no two of them differ in all positions. Furthermore, if $2 \le i<j$ the $\V_i$ and $\V_j$ will differ in positions $2^i+1$ and $2^i+2$, while $\V_1$ differs from $\V_2$ in positions 2 and 3, and $\V_1$ differs from $\V_j$ for $j \ge 3$ in positions 2 and 4. Thus, vertices in $S\setminus \{V_0\}$ differ pairwise in at least 2 positions 
and form an independent set. We will start by forming a $\V_1\V_{r}$-path that contains each vector in the set $\{ \V_1,\dots,\V_{r}\}$, in that order. For each pair $\V_i$ and $\V_{i+1}$, $1 \leq i \leq r-1$, let $\V_{i,k}$ be the vector that differs from $\V_i$ only in the first $k$ positions in which $\V_i$ and $\V_{i+1}$ differ, for $1 \leq k \leq h(\V_i,\V_{i+1})-1$.
In other words, we move from $\V_i$ to $\V_{i+1}$ by flipping one position at a time, in order. 
For example, in $FQ_8$, we construct the following path from $\V_1=10101010$ to $\V_2=11001100$: 
\[ 10101010\; -\; 11101010 \; -\; 11001010 \; -\; 11001110 \; -\;11001100. \]
Then, if $h(\V_i,\V_{i+1})=l_i$, the vertices $\V_{i}=\V_{i,0},\V_{i,1} ,\dots,\V_{i,l_i-1} ,\V_{i+1}$ induce a path in $FQ_n$. We can see that this is true by noting that, again, all of these vectors have a 1 in their first position, and therefore no two of them differ in all positions. 
 Hence, the only edges between them are edges in $Q_n$. If there were any edges other than those edges between consecutive vectors in the list, we would have a path of length less than $l_i$ from $\V_i$ to $\V_{i+1}$ in the hypercube $Q_n$. This is not possible, since the graph distance between two vertices in $Q_n$ is equal to their Hamming distance.

We now claim that the full set $\V_{1},\V_{1,1} ,\dots,\V_{2},\V_{2,1} ,\dots,\V_{r}$ induces a path in $FQ_n$. 
Let $m=2^{\lfloor \lg n \rfloor}$. We will show that any two non-consecutive vectors in the list differ in at least 2 positions among the first $m$ positions. For any vector $\V\in V(FQ_n)$, we let $\V^L$ indicate the vector formed by  the first block of $m/2$ positions of $\V$, and $\V^R$ the next block of $m/2$ positions. By construction, we have that $h(\V_i^L,\V_j^L)=h(\V_i^R,\V_j^R)=m/4$ for $i \ne j$. Suppose that there is an edge from $\V_{i,s}$ to $\V_{j,t}$, with $1\leq i < j\leq r$ and $0\leq s < l_i$, $0< t \leq l_j$, with notation $\V_{i,0}=\V_i$. We consider the following cases.

\begin{itemize}
\item[(i)] Suppose that $s,t \leq m/4$ or that $s,t \geq m/4$. In the former case, we have that $\V_{i,s}^R=\V_i^R$ and $\V_{j,t}^R=\V_j^R$. Therefore, $h(\V_{i,s},\V_{j,t})\geq m/4\geq 2$. In the latter case, we have $\V_{i,s}^L=\V_{i+1}^L$ and $\V_{j,t}^L=\V_{j+1}^L$. Therefore, $h(\V_{i,s},\V_{j,t})\geq m/4\geq 2$, and we have arrived at a contradiction.
\item[(ii)] Suppose that $s>m/4$, $t< m/4$ and $i=j-1$. Then, $\V_{i,s}$ differs from $\V_j$ in at least one position in the set $m/2+1,\dots,n$, while $\V_{j,t}$ does not. 
Furthermore, $\V_{j,t}$ differs from $\V_j$ in at least one position in the set $1,\dots,m/2$, while $\V_{i,s}$ does not. Therefore, $h(\V_{i,s},\V_{j,t})\geq 2$, and we have arrived at a contradiction.
\item[(iii)] Suppose that $s>m/4$, $t<m/4$ and $i<j-1$. Then $\V_{j,t}^R=\V_j^R$. If $s\geq m/2$, then $\V_{i,s}^R=\V_{i+1}^R$ and $\V_{i,s}^L=\V_{i+1}^L$. In particular, since $h(\V_{i+1}^R,\V_j^R)=m/4$, we must have $h(\V_{i,s},\V_{j,t})\geq m/4\geq 2$. If $s<m/2$, we have a path $\V_{i,m/2},\V_{i,m/2-1},\dots,\V_{i,s},\V_{j,t},\V_{j,t-1},\dots,\V_j$. This path has length $m/2-s+1+t<m/2$, and uses only edges from $Q_n$. This contradicts the fact that \[
h(\V_{i,m/2},\V_j) \ge h(\V_{i, m/2}^R, \V_j^R) + h(\V_{i, m/2}^L, \V_j^L)= h(\V_{i+1}^R, \V_j^R) + h(\V_{i+1}^L, \V_j^L) = m/2.
\] 
Therefore, $h(\V_{i,s},\V_{j,t})\geq 2$, and we have arrived at a contradiction.
\item[(iv)] Suppose that $s <m/4$ and $t> m/4$. Then, we have a path $\V_{i},\dots,\V_{i,s},\V_{j,t},\dots,\V_{j,\max(m/2,t)}$. This path has length $s+1+(m/2-t)<m/2$ if $t\leq m/2$ and length $s+1<m/2$ otherwise,
and uses only edges from $Q_n$. This contradicts the fact that $h(\V_{i},\V_{j,m/2})\geq m/2$. Therefore, $h(\V_{i,s},\V_{j,t})\geq 2$, and we have arrived at a contradiction.
\end{itemize}

We label this $\V_1 \V_{r}$ path $P$. We now extend $P$ to a tree that also includes $\V_0$. Note that the vector $\V_{r-1,m/2}=\V_C$ has minimum Hamming distance to $\1$ over all vectors on the path. 
This is easy to see for the vectors on the subpath from $\V_{r-1}$ to $\V_{r}$, since this is where we switch from flipping 0's to 1's to flipping 1's to 0's. 

Furthermore, note that $\V_C$ has all 1's in positions $1,\dots,\min (n,3m/2)$. If $m<n\leq 3m/2$, then $\V_C=\1$. If $n> 3m/2$, then $\V_C$ has at most $n/4$ 0's. 
Consider the three blocks of positions: $1,\dots,m/2$; $m/2+1,\dots,m$; and $m+1,\dots ,3m/2$. For any vector on the path that precedes $\V_{r-1}$, at least two out of those three blocks have the same number of 1's and 0's, and therefore the vector has 0's in at least $m/2>n/4$ places. If $\V_C\neq\1$, we add the path $\V_C, \dots, \1$ to our tree, by flipping one position at a time as before. All of these vertices have at least as many 1's as $\V_C$ and therefore do not have edges to $P$. We label our new graph $T_1$.

Let $T = T_1+\V_0$. Note that the only vertex in $T_1$ that $\V_0$ is adjacent to is $\1$. Then $T$ is isomorphic to a subdivision of $K_{1,3}$, and is therefore asymmetric if and only if its three leaves have distinct distances from the unique vertex of degree 3, $\V_C$. If this is the case, then we have found a superset $S'$ of the determining set $S$, such that the subgraph of $FQ_n$ induced by $S'$ is asymmetric. 
By Lemma~\ref{lem:inducedsg}, this is enough to prove that $\dist (FQ_n)=2$ and we are done. 
Suppose that $T$ is not asymmetric. We have that $d (\V_1,\V_C)>m/2$ in $T$, while $d (\V_0,\V_C),d (\V_{\lceil \lg n \rceil+1},\V_C)<m/2$, so it must be the case that $d (\V_0,\V_C)= d (\V_{\lceil \lg n \rceil+1},\V_C)$. We add the vector $100\dots 0$, with an edge to $\V_0$ to $T$ to create $T'$. It is easy to see that $100\dots 0$ does not have edges in $FQ_n$ to any other vertices in $T$. Therefore, $T'$ is an asymmetric induced subgraph of $FQ_n$ that contains all the vertices of $S$, and we are done by Lemma~\ref{lem:inducedsg}.
\end{proof}

\begin{cor}\label{cor:costfolded}
For $n\geq 4$, we have \[ \rho(FQ_n)=O(n \lg n). \]
\end{cor}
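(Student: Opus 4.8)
The plan is to read off the cost bound directly from the construction already carried out in the proof of Theorem~\ref{thm:distFQn}, rather than building anything new. By definition, $\rho(FQ_n)$ is at most the size of any color class in a $2$-distinguishing coloring. The proof of Theorem~\ref{thm:distFQn} produces exactly such a coloring: via Lemma~\ref{lem:inducedsg}, the vertex set inducing the asymmetric tree $T$ (a subdivided $K_{1,3}$, in the bad case with one extra pendant vertex $100\cdots0$) is a color class, since that set contains a determining set for $FQ_n$ and induces an asymmetric subgraph. Call this vertex set $S'$. It therefore suffices to bound $|S'|$ and show it is $O(n\lg n)$. For the finitely many values $4 \le n \le 7$ the explicit subgraphs in Figure~\ref{fig:distFQcases} have bounded size, so the asymptotic claim is immediate; I would assume $n \ge 8$ for the rest of the argument.

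The main step is to bound the dominant contribution, which is the $\V_1\V_r$-path $P$ with $r = \lceil \lg n\rceil$. Since $P$ is obtained by flipping one bit at a time along shortest $Q_n$-paths between consecutive $\V_i$, its number of edges is exactly $\sum_{i=1}^{r-1} h(\V_i,\V_{i+1})$. The quantitative input needed is an estimate of each $h(\V_i,\V_{i+1})$, which I would extract from the explicit alternating-block description of the $\V_i$: within each block of $2^i$ consecutive positions, $\V_i$ and $\V_{i+1}$ disagree in exactly half the coordinates, so $h(\V_i,\V_{i+1}) \le n/2$. Summing over the $r-1$ consecutive pairs shows $P$ has $O(n\lg n)$ edges, hence $O(n\lg n)$ vertices.

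The remaining pieces of $S'$ contribute only lower-order terms. The appended path from $\V_C$ to $\1$ has length $h(\V_C,\1)$, which equals the number of $0$'s in $\V_C$; the proof of Theorem~\ref{thm:distFQn} already bounds this by $n/4$, so this path adds $O(n)$ vertices. Adjoining $\V_0 = \0$ together with the single possible extra vertex $100\cdots0$ contributes $O(1)$. Summing the three contributions gives $|S'| = O(n\lg n) + O(n) + O(1) = O(n\lg n)$, and therefore $\rho(FQ_n) \le |S'| = O(n\lg n)$, as claimed.

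I do not anticipate a genuine obstacle here: every ingredient is inherited from Theorem~\ref{thm:distFQn}, and the only new estimate is the crude bound $h(\V_i,\V_{i+1}) \le n/2$, which falls straight out of the block pattern of the $\V_i$. The one point requiring care is simply to separate the three contributions cleanly and confirm that the path $P$ dominates, rather than the $\V_C$-to-$\1$ path or the underlying determining set $S$ (which has only $\lceil \lg n\rceil + 2$ vertices); once separated, the $\Theta(n\lg n)$ count for $P$ clearly absorbs the others.
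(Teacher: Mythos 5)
Your proposal is correct and matches the paper's own proof, which likewise reads the bound directly off the construction in Theorem~\ref{thm:distFQn}: the $\V_1\V_r$ path $P$ has length at most $1+(r-1)\cdot n/2$ with $r=\lceil \lg n\rceil$, at most $n/4+2$ further vectors are added to form the asymmetric tree, and the vertex set inducing that tree is a color class in a $2$-distinguishing coloring. Your only additions are the explicit justification of $h(\V_i,\V_{i+1})\le n/2$ from the block structure and the explicit handling of small $n$, both of which the paper leaves implicit.
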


\begin{proof}
 By the construction in the proof of Theorem~\ref{thm:distFQn}, the $\V_1\V_r$ path $P$ has length at most $1+(r-1)\cdot n/2$, with at most $n/4+2$ vectors added to create the asymmetric tree. The set that induces this tree forms one of the two color classes in a 2-distinguishing coloring.
\end{proof}

\section{Enhanced Hypercubes}\label{sec:enhanced}

In~\cite{TzWe1991}, Tzeng and Wei introduced the enhanced hypercube. The \emph{enhanced hypercube} $Q_{n,k}$ for $1\le k \le n-1$ is the result of adding edges between vertices in $Q_n$ that differ in their rightmost $n-k+1$ positions. That is, we add the edge between $\mathbf x = x_1\ldots x_n$ and $\mathbf y = y_1 \ldots y_n$ if $y_i = x_i$ for $1 \le i\le k-1$ and $y_i = 1+x_i \pmod{2}$ for $k \le i \le n$. 

Note that $Q_{n,1} = FQ_n$. Generalizing this idea, Lu and Huang~\cite{LuHu2021} show that $Q_{n,k}=Q_{k-1} \Box FQ_{n-k+1}$. Additionally, for any $k$, we have that $Q_{n,k}$ is a subgraph of $AQ_n$. We can use this with several results from Sabidussi~\cite{Sa1959}, Boutin~\cite{Bo2009a}, and Imrich and Klav\v{z}ar~\cite{ImKl2006} to find determining and distinguishing numbers for $Q_{n,k}$.

 \begin{prop}\label{prop:cartesianautomorphismgroup}
\cite{Sa1959} Let $G_1,\ldots, G_m$ be connected graphs which are relatively prime with respect to the Cartesian product. 
 Then $\aut(G_1\Box \cdots \Box G_m) \cong \aut(G_1) \times \cdots \times \aut(G_m)$. 
 \end{prop}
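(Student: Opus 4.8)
The plan is to reduce the statement to the canonical nature of the Cartesian prime factorization of a connected graph. First I would dispatch the easy inclusion: writing each vertex of $G = G_1 \Box \cdots \Box G_m$ as a tuple $(x_1,\ldots,x_m)$ with $x_i \in V(G_i)$, any choice $\alpha_i \in \aut(G_i)$ yields an automorphism of $G$ acting coordinatewise, $(\alpha_1 \times \cdots \times \alpha_m)(x_1,\ldots,x_m) = (\alpha_1(x_1),\ldots,\alpha_m(x_m))$, since adjacency in a Cartesian product only involves one coordinate at a time. Assigning to each tuple this product automorphism is an injective group homomorphism $\aut(G_1)\times\cdots\times\aut(G_m) \to \aut(G)$, because composition and inversion are carried out coordinatewise. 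The entire content of the proposition is therefore the reverse inclusion: that every automorphism of $G$ arises in this way.

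For surjectivity I would lean on the Sabidussi--Vizing theory. By unique prime factorization, each $G_i$ factors as $G_i = P_{i1}\Box\cdots\Box P_{ik_i}$ into prime connected graphs, so $G = \Box_{i,j} P_{ij}$ is the prime factorization of $G$. The key fact I would invoke is that the partition of the edge set of a connected graph into factor layers --- the equivalence classes of the transitive closure of the Cartesian product relation on edges --- is intrinsic to the graph and recovers precisely this prime factorization. Because this relation is defined in purely graph-theoretic terms, every $\alpha \in \aut(G)$ must preserve it; hence $\alpha$ sends prime-factor layers to prime-factor layers and induces a permutation $\pi$ of the index set $\{(i,j)\}$ together with isomorphisms $P_{ij} \to P_{\pi(i,j)}$. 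In particular $P_{ij} \cong P_{\pi(i,j)}$ for every factor.

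Now the relatively prime hypothesis does the real work: since the $G_i$ share no common Cartesian factor, no prime factor of $G_i$ is isomorphic to a prime factor of $G_{i'}$ for $i \neq i'$. Consequently $\pi$ can only permute indices sharing the same first coordinate, i.e. $\pi$ preserves each block $\{(i,1),\ldots,(i,k_i)\}$. This forces $\alpha$ to carry the $G_i$-layers to themselves, so it restricts to an automorphism $\alpha_i \in \aut(G_i)$ on each factor and equals the coordinatewise product $\alpha_1 \times \cdots \times \alpha_m$. Combined with the easy direction, this exhibits the homomorphism above as an isomorphism.

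The hard part is the black box in the second paragraph: that the Cartesian factorization of a connected graph is unique and that the factor-layer partition is an automorphism-invariant of the graph. Proving this from scratch is the substance of the Sabidussi--Vizing theorem, where one shows the product structure can be reconstructed intrinsically (via the square property and the transitive closure of the Djokovi\'c--Winkler relation), so that automorphisms are forced to respect it. Since this is exactly the classical result being cited, I would invoke it rather than reprove it, and reserve the genuinely original argument for the two short reductions --- the coordinatewise homomorphism and the blockwise decomposition of $\pi$ forced by relative primeness.
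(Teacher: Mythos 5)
The paper never proves this proposition; it is quoted directly from Sabidussi's 1959 paper, so there is no internal argument to compare yours against. Your proposal is correct, and it is the standard derivation of the statement from the classical theory: the coordinatewise embedding $\aut(G_1)\times\cdots\times\aut(G_m)\to\aut(G_1\Box\cdots\Box G_m)$ is routine, and the reverse containment follows from the Sabidussi--Vizing structure theorem (every automorphism of a connected Cartesian product permutes the prime-factor layers, acting by isomorphisms on them), combined with your observation that relative primeness of the $G_i$ forbids any prime factor of $G_i$ from being carried to a prime factor of $G_{i'}$ for $i\neq i'$, so the induced permutation $\pi$ preserves blocks. Two details are worth making explicit if you write this up. First, $\pi$ may still permute isomorphic prime factors \emph{within} a single $G_i$ (e.g.\ if $G_i = P\Box P$), so the restriction $\alpha_i$ is an automorphism of $G_i$ that need not act coordinatewise on the primes of $G_i$; your phrasing ``restricts to an automorphism $\alpha_i\in\aut(G_i)$'' is consistent with this, but the words ``coordinatewise product'' must be read relative to the $G_i$-coordinates, not the prime coordinates. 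Second, the reason the block restrictions are well defined is that, by the structure theorem, each output prime coordinate of $\alpha$ depends on exactly one input prime coordinate; block-preservation of $\pi$ then guarantees that the block-$i$ output depends only on the block-$i$ input. Since the black box you invoke is precisely the content of the cited reference, your division of labor --- cite the classical theorem, prove the two reductions --- constitutes a complete and legitimate proof of the proposition, and in fact supplies more detail than the paper itself does.
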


\begin{thm}\label{thm:cartproddet}
\cite{Bo2009a} Let $G$ be a connected graph and $G = G_1^{k_1} \Box \cdots \Box G_m^{k_m}$ where $G_i$ are prime with respect to the Cartesian product.
Then $\det(G) = \max\{\det(G_i)^{k_i}: 1\le i \le m\}$. 
\end{thm}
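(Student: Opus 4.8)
The plan is to derive this directly from the characteristic matrix criterion of Theorem~\ref{thm:CharMatrix}. Write $G$ as a Cartesian product of $n = \sum_{i=1}^m k_i$ prime factors, grouping the $k_i$ mutually isomorphic copies of each $G_i$ into a block, and assume—as the uniqueness of the Cartesian prime factorization of a connected graph allows—that $G_1,\dots,G_m$ are pairwise non-isomorphic. A candidate determining set $S$ of size $r$ is recorded by its $r \times n$ characteristic matrix $X(S)$, whose columns partition into $m$ blocks, the $i$th block consisting of the $k_i$ columns indexed by the copies of $G_i$. Here $G_i^{\Box k_i}$ denotes the $k_i$-fold Cartesian power, so the claim to prove is $\det(G) = \max_i \det(G_i^{\Box k_i})$.

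First I would observe that the two conditions in Theorem~\ref{thm:CharMatrix}—each column's entries contain a determining set for its factor, and no two columns are isomorphic—decouple across the blocks. The per-column condition is plainly local to a block. For the non-isomorphism condition, the key point is that two columns lying in different blocks can never be isomorphic, since isomorphism of columns requires a graph isomorphism between the underlying factors and $G_i \not\cong G_j$ for $i \neq j$. Hence the only binding instances of ``no two columns isomorphic'' occur within a single block. Consequently $S$ is a determining set for $G$ if and only if, for every $i$, the $i$th block of $X(S)$ is a valid characteristic matrix for a determining set of $G_i^{\Box k_i}$ (its $k_i$ columns pairwise non-isomorphic, each containing a determining set for $G_i$), which is exactly Theorem~\ref{thm:CharMatrix} applied to $G_i^{\Box k_i}$.

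Next I would invoke monotonicity: appending an extra vertex (row) to a determining set keeps it determining, because a lengthened column still contains a determining set for its factor and previously non-isomorphic columns stay non-isomorphic. Thus for each fixed $i$ the feasible row-counts form an up-set whose minimum is precisely $\det(G_i^{\Box k_i})$. Because entries in distinct blocks may be chosen completely independently (coordinates in distinct factors impose no mutual constraint), a single common value $r$ simultaneously realizes all blocks exactly when $r \ge \det(G_i^{\Box k_i})$ for every $i$. This yields the upper bound: taking $r = \max_i \det(G_i^{\Box k_i})$, pad a minimum characteristic matrix for each $G_i^{\Box k_i}$ up to $r$ rows and concatenate the blocks horizontally to obtain a determining set of $G$ of size $r$. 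For the matching lower bound, restrict the characteristic matrix of an arbitrary determining set of $G$ to the block $i^*$ attaining the maximum; this produces a determining set of $G_{i^*}^{\Box k_{i^*}}$, forcing $|S| \ge \det(G_{i^*}^{\Box k_{i^*}})$.

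The main obstacle I anticipate is making the block-restriction in the lower bound airtight: dropping the columns outside block $i^*$ can cause two distinct vertices of $G$ to project onto the same vertex of $G_{i^*}^{\Box k_{i^*}}$, so the restricted matrix may acquire duplicate rows and hence fail to be the characteristic matrix of a genuine set. I would resolve this by noting that deleting a duplicated row alters neither the set of entries appearing in any column nor the isomorphism relation between columns—the deleted row imposes exactly the constraints already imposed by its twin—so the reduced matrix still satisfies Theorem~\ref{thm:CharMatrix} and certifies a determining set of $G_{i^*}^{\Box k_{i^*}}$ of size at most $|S|$. The remaining ingredient, implicit throughout, is precisely that the prime factorization into the $G_i^{\Box k_i}$ is well defined with the $G_i$ pairwise non-isomorphic, which is what licenses the clean decoupling of the two conditions across blocks.
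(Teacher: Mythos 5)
Your proof is correct; since the paper states this theorem as a citation to \cite{Bo2009a} without reproducing a proof, the comparison is with that source, and your route---decoupling the two conditions of Theorem~\ref{thm:CharMatrix} across blocks of mutually isomorphic prime factors, padding rows for the upper bound, and restricting (with duplicate rows deleted) to a single block for the lower bound---is essentially the argument given there. You also correctly flag and resolve the one genuine subtlety (block restriction can create duplicate rows), and your explicit assumption that the $G_i$ are pairwise non-isomorphic is indeed the intended reading of the statement, since without collecting isomorphic factors into powers the formula fails (e.g.\ for $K_3 \Box K_3$ viewed as two blocks, the right side would give $2$ while the determining number is $3$).
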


 \begin{prop}\label{prop:cartesiandistlemma}
\cite{ImKl2006} Let $G$ and $H$ be connected graphs that are relatively prime with respect to the Cartesian product.
 If $\dist(G) = 2$ and $2 \le \dist(H) \le 3$ then $\dist(G\Box H) = 2$. 
 \end{prop}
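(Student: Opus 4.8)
The plan is to exploit the fact that, by Proposition~\ref{prop:cartesianautomorphismgroup}, $\aut(G\Box H)\cong\aut(G)\times\aut(H)$ acts coordinatewise, so every automorphism has the form $(\sigma,\tau)\colon(g,h)\mapsto(\sigma(g),\tau(h))$ with $\sigma\in\aut(G)$ and $\tau\in\aut(H)$. I would encode a $2$-coloring of $G\Box H$ as an assignment of a \emph{layer pattern} $f(h)\in\{0,1\}^{V(G)}$ to each $h\in V(H)$, where $f(h)_g$ is the color of the vertex $(g,h)$. Under this encoding, $(\sigma,\tau)$ preserves the coloring if and only if $\sigma\cdot f(h)=f(\tau(h))$ for every $h$, where $\sigma$ acts on patterns by permuting the coordinates indexed by $V(G)$. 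The whole argument then reduces to choosing $f$ so that this identity forces both $\sigma$ and $\tau$ to be trivial.

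The key observation is that I do \emph{not} need three mutually nonisomorphic rigid colorings of $G$; the constant patterns supply the extra classes for free. Let $w\in\{0,1\}^{V(G)}$ be a $2$-distinguishing coloring of $G$, so its stabilizer in $\aut(G)$ is trivial; since $\dist(G)=2$ the group $\aut(G)$ is nontrivial, hence $w$ is nonconstant. The all-$0$ pattern $\mathbf 0$ and the all-$1$ pattern $\mathbf 1$ are each fixed by all of $\aut(G)$, so $\{\mathbf 0\}$, $\{\mathbf 1\}$, and the $\aut(G)$-orbit of $w$ are three pairwise distinct orbits in $\{0,1\}^{V(G)}$. Now take a minimum distinguishing coloring $c_H$ of $H$ with $d=\dist(H)\in\{2,3\}$ color classes, all nonempty by minimality. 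Define $f$ by sending one color class of $H$ to the pattern $w$, and the remaining $d-1$ classes to distinct patterns among $\mathbf 0,\mathbf 1$.

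To verify this works, suppose $(\sigma,\tau)$ preserves the coloring, so $\sigma\cdot f(h)=f(\tau(h))$ for all $h$. Then $f(h)$ and $f(\tau(h))$ always lie in the same $\aut(G)$-orbit; since the chosen patterns occupy distinct orbits, $\tau$ preserves the partition of $V(H)$ into the classes of $c_H$, that is, $c_H\circ\tau=c_H$. Because $c_H$ is distinguishing, $\tau=\mathrm{id}$. Then $\sigma\cdot f(h)=f(h)$ for all $h$; choosing $h$ in the nonempty class assigned the pattern $w$ gives $\sigma\cdot w=w$, so $\sigma=\mathrm{id}$ by rigidity of $w$. Hence the only color-preserving automorphism is the identity, giving $\dist(G\Box H)\le 2$; and $\aut(G\Box H)$ is nontrivial because $\aut(G)$ is, so $\dist(G\Box H)=2$.

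The step I expect to require the most care is the bookkeeping of the coordinatewise action and the orbit argument: checking that ``$(\sigma,\tau)$ preserves the coloring'' translates exactly into ``$\sigma\cdot f(h)=f(\tau(h))$'', and confirming that $\mathbf 0$ and $\mathbf 1$ genuinely sit in singleton orbits distinct from the orbit of $w$. The conceptual crux, and the reason two colors suffice even when $\dist(H)=3$, is that only one of the $H$-classes must carry a rigid $G$-pattern (to pin down $\sigma$), while the other classes need only be distinguishable from it as $\aut(G)$-orbits, a role the constant colorings fill automatically.
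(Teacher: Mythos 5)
Your proof is correct. Note, however, that the paper offers no proof of this proposition at all: it is quoted verbatim from Imrich and Klav\v{z}ar~\cite{ImKl2006} and used as a black box (in the proof of Theorem~\ref{thm:distenhanced}), so there is no internal argument to compare against. Your argument is a valid, self-contained derivation from Proposition~\ref{prop:cartesianautomorphismgroup}, which is exactly the tool the paper itself pairs with this result: since $G$ and $H$ are connected and relatively prime, every automorphism of $G \Box H$ acts coordinatewise as $(\sigma,\tau)$, the layer-pattern translation of ``color-preserving'' into $\sigma \cdot f(h) = f(\tau(h))$ is right, and the three pairwise distinct $\aut(G)$-orbits $\{\mathbf 0\}$, $\{\mathbf 1\}$, and the orbit of a rigid nonconstant pattern $w$ (nonconstant because $\dist(G)=2$ forces $\aut(G)$ nontrivial) force $\tau$ to preserve the classes of a minimum distinguishing coloring of $H$, whence $\tau = \mathrm{id}$, and then rigidity of $w$ on a nonempty class gives $\sigma = \mathrm{id}$. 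Your key insight --- that only one $H$-class needs to carry a rigid $G$-pattern, while constant patterns supply the other orbit-distinct classes for free, which is precisely why two colors absorb $\dist(H)=3$ --- is also the conceptual heart of the original Imrich--Klav\v{z}ar argument, so your route is essentially a reconstruction of the cited proof rather than a departure from it; its value here is that it makes the paper's appeal to~\cite{ImKl2006} self-contained using only machinery (Sabidussi's theorem) the paper already states.
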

 
  \begin{prop}\label{prop:cartesiandistlemma2}
\cite{ImKl2006} Let $G,H$ be connected graphs with $3 \le |G| \le |H|+1$. If $G$ and $H$ are relatively 
 prime with respect to the Cartesian product, then $\dist(G \Box H) \le \max\{2,\dist(H)\}$. 
 \end{prop}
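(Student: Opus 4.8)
The plan is to exploit that $G$ and $H$ are relatively prime. By Proposition~\ref{prop:cartesianautomorphismgroup}, $\aut(G\Box H)\cong\aut(G)\times\aut(H)$, and every automorphism acts coordinatewise: $\sigma(g,h)=(\alpha(g),\beta(h))$ for some $\alpha\in\aut(G)$ and $\beta\in\aut(H)$. View $G\Box H$ as $|G|$ disjoint $H$-layers indexed by $V(G)$, and encode a vertex coloring $c$ by the family of layer colorings $c_g\colon V(H)\to[d]$ given by $c_g(h)=c(g,h)$, where $d=\max\{2,\dist(H)\}$. A one-line computation shows that $\sigma=(\alpha,\beta)$ preserves $c$ if and only if $c_g=c_{\alpha(g)}\circ\beta$ for every $g\in V(G)$; in particular $c_g$ and $c_{\alpha(g)}$ must lie in the same orbit of the action of $\aut(H)$ on colorings.

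This reduces the problem to a purely combinatorial task: produce $|G|$ colorings of $H$ with $d$ colors that are pairwise inequivalent under $\aut(H)$, at least one of which is rigid (has trivial color-preserving automorphism group), and assign them bijectively to the layers. Indeed, if the assigned layer colorings are pairwise inequivalent, then $c_g$ and $c_{\alpha(g)}$ being equivalent forces $\alpha(g)=g$ for all $g$, so $\alpha$ is the identity; the condition then collapses to $c_g=c_g\circ\beta$, and applying this on the layer carrying the rigid coloring forces $\beta$ to be the identity as well. Hence $c$ is a distinguishing coloring and $\dist(G\Box H)\le d$. Notice that the graph structure of $G$ is never used — only $|G|$ and relative primeness (through the coordinatewise action) enter.

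To build the colorings I would separate inequivalence from rigidity using two different $\aut(H)$-invariants. Since $d\ge\dist(H)$, fix one distinguishing (hence rigid) $d$-coloring $\gamma$ of $H$, and let $k$ be the number of vertices that $\gamma$ assigns color $1$. Because $d\ge 2$, for each $j\in\{0,1,\dots,|H|\}\setminus\{k\}$ I can form a coloring that uses only colors $1$ and $2$ and gives color $1$ to exactly $j$ vertices. Together with $\gamma$ this produces $|H|+1$ colorings whose color-$1$ classes have the $|H|+1$ distinct sizes $0,1,\dots,|H|$. As the size of a color class is invariant under $\aut(H)$, these colorings are automatically pairwise inequivalent, and $\gamma$ supplies the required rigid member. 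The hypothesis $|G|\le|H|+1$ guarantees there are enough of them to cover all layers, while $|G|\ge 3$ forces $|H|\ge 2$, keeping the construction nondegenerate.

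The main obstacle is exactly this combinatorial heart: manufacturing enough pairwise-inequivalent colorings while keeping one rigid, with a number of colors as small as $\max\{2,\dist(H)\}$. The clean resolution is the decoupling just described — color-class sizes force inequivalence for free, and a single distinguishing coloring supplies rigidity. It is worth emphasizing that relative primeness is used only through Proposition~\ref{prop:cartesianautomorphismgroup}: it is precisely what prevents automorphisms from mixing the $G$- and $H$-coordinates, so that the layer structure is preserved and the entire argument depends on $G$ through nothing more than $|G|$.
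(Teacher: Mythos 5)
Your proof is correct. The paper itself gives no proof of this proposition---it is quoted directly from Imrich and Klav\v{z}ar~\cite{ImKl2006}---and your argument (coordinatewise action of $\aut(G)\times\aut(H)$ via Proposition~\ref{prop:cartesianautomorphismgroup}, layers assigned pairwise inequivalent colorings of $H$ separated by the size of the color-$1$ class, with one layer carrying a rigid distinguishing coloring of $H$ to kill the $\aut(H)$-factor) is essentially the standard argument from that reference, with all hypotheses ($|G|\le|H|+1$ for the count of inequivalent colorings, relative primeness for the coordinatewise action) used exactly where they are needed.
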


With these we can state our result for the determining number of $Q_{n,k}$. With Proposition~\ref{prop:Qnkdet}, Theorem~\ref{thm:DetQn} and Theorem~\ref{thm:DetFQn}, we can identify the determining number of $Q_{n,k}$ for any $n$ and $k$. 

\begin{prop}\label{prop:Qnkdet}
For $n\ge 2$ and $1 \le k \le n-1$, $\det(Q_{n,k})=\max\{ \det(Q_{k-1}), \det(FQ_{n-k+1})\}$.
\end{prop}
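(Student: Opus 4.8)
The plan is to apply Theorem~\ref{thm:cartproddet} to the decomposition $Q_{n,k}=Q_{k-1} \Box FQ_{n-k+1}$ established by Lu and Huang~\cite{LuHu2021}. Since that theorem computes the determining number of a Cartesian product in terms of the determining numbers of its prime factors, the bulk of the work is verifying that the hypotheses of the theorem are met, after which the formula $\det(Q_{n,k})=\max\{\det(Q_{k-1}),\det(FQ_{n-k+1})\}$ follows immediately.

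First I would record that $Q_{k-1}$ and $FQ_{n-k+1}$ are both connected, so their product $Q_{n,k}$ is connected, as required. The key structural point is that Theorem~\ref{thm:cartproddet} is phrased in terms of a factorization $G_1^{k_1}\Box\cdots\Box G_m^{k_m}$ into \emph{prime} factors; thus the central step is to exhibit the prime factorization of $Q_{k-1}\Box FQ_{n-k+1}$. Here $Q_{k-1}=K_2^{\Box(k-1)}$, so $K_2$ appears as a prime factor with multiplicity $k-1$, while $FQ_{n-k+1}$ contributes its own prime factors. I would then invoke $\det(K_2^{\Box(k-1)})=\det(Q_{k-1})$ (so the $K_2$ factors collectively contribute exactly $\det(Q_{k-1})$) and treat the folded-cube factors as contributing $\det(FQ_{n-k+1})$, so that the maximum over all prime factors reorganizes into the maximum of the two displayed quantities.

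The main obstacle, and the point I would treat most carefully, is establishing that $FQ_{n-k+1}$ is prime with respect to the Cartesian product, or more precisely that it contains no $K_2$ factor that could merge with the $Q_{k-1}=K_2^{\Box(k-1)}$ part and inflate the count. For the boundary cases this is genuinely delicate: when $n-k+1\le 3$ the folded cube degenerates ($FQ_1=K_2$, $FQ_2=K_4$, $FQ_3=K_{4,4}$), so one must check that the claimed formula still holds there by hand, possibly after absorbing a stray $K_2$ factor into $Q_{k-1}$. For $n-k+1\ge 4$, I would argue that $FQ_n$ is prime (e.g.\ since it is not bipartite for $n\ge 2$ even, or by a direct degree/connectivity argument ruling out a nontrivial product decomposition), so that Theorem~\ref{thm:cartproddet} applies cleanly with the two groups of prime factors being the $k-1$ copies of $K_2$ and the prime factors of $FQ_{n-k+1}$.

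Finally, I would assemble these pieces: writing the full prime factorization of $Q_{n,k}$, applying Theorem~\ref{thm:cartproddet} to get $\det(Q_{n,k})$ as the maximum of the determining numbers over all prime factors, and regrouping this maximum as $\max\{\det(Q_{k-1}),\det(FQ_{n-k+1})\}$, with the edge cases $k=1$ (where $Q_{k-1}=Q_0$ is a single vertex and $Q_{n,1}=FQ_n$) and small $n-k+1$ checked separately against the known values from Theorem~\ref{thm:DetFQn}. The values of $\det(Q_{k-1})$ and $\det(FQ_{n-k+1})$ can then be read off from Theorem~\ref{thm:DetQn} and Theorem~\ref{thm:DetFQn}, as the surrounding text notes.
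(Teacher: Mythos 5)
Your overall strategy is exactly the paper's: use the Lu--Huang decomposition $Q_{n,k}=Q_{k-1}\Box FQ_{n-k+1}$, check that the two factors share no common prime factor, and apply Theorem~\ref{thm:cartproddet}, regrouping the maximum over prime powers into $\max\{\det(Q_{k-1}),\det(FQ_{n-k+1})\}$. You also correctly isolate the one fact that must be verified: since the only prime factor of $Q_{k-1}=K_2^{\Box(k-1)}$ is $K_2$, everything reduces to showing that $FQ_{n-k+1}$ has no Cartesian factor of $K_2$.

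The gap is in how you propose to establish that fact. Your suggested argument --- that $FQ_n$ is prime because it is non-bipartite when $n$ is even --- does not work, for two reasons. First, non-bipartiteness does not rule out a $K_2$ factor: a Cartesian product $G\Box H$ is bipartite if and only if both factors are bipartite, so for example the prism $K_2\Box K_3$ is non-bipartite yet has a $K_2$ factor. Second, the argument says nothing when $n-k+1$ is odd, and in that case $FQ_{n-k+1}$ actually \emph{is} bipartite (e.g.\ $FQ_3=K_{4,4}$), so there is no parity obstruction available at all. The fallback ``direct degree/connectivity argument'' is never developed, and ruling out a $K_2$ factor of a folded hypercube is precisely the nontrivial content of this step, not something to wave through. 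The paper closes it by citing a second result of Lu and Huang~\cite{LuHu2021}: for $\ell\ge 2$, $FQ_\ell$ has no Cartesian factor of $K_2$. Note also that in the proposition's range $k\le n-1$ forces $n-k+1\ge 2$, so your worry about the degenerate case $FQ_1=K_2$ never arises and no ``absorption'' of a stray $K_2$ into $Q_{k-1}$ is needed. With that citation in place of your primality sketch, the rest of your argument --- connectivity, grouping the $k-1$ copies of $K_2$ so they contribute $\det(Q_{k-1})$, and applying Theorem~\ref{thm:cartproddet} once more to $FQ_{n-k+1}$ itself so that its prime powers collectively contribute $\det(FQ_{n-k+1})$ --- goes through as you describe.
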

\begin{proof}
Lu and Huang~\cite{LuHu2021} proved that if $\ell \ge 2$, then $FQ_\ell$ has no Cartesian factor of $K_2$. Thus, the decomposition of $Q_{n,k}$ as $Q_{k-1} \Box FQ_{n-k+1}$ is into relatively prime components 
and the result follows from Theorem~\ref{thm:cartproddet}.
\end{proof}

For $2 \le n \le 5$, we computational compute the distinguishing numbers of the enhanced hypercubes are summarized in Table~\ref{tab:distQnk}.

\begin{table}
    \centering
    \begin{tabular}{|c|c|c|c|c|} \hline
         $_k \backslash ^n$ & 2 & 3 & 4 & 5 \\ \hline
         1 & 4 & 5 & 2 & 2 \\ \hline
         2 & $\cdot$ &  2 & 2  & 2  \\ \hline
         3 & $\cdot$ & $\cdot$ & 2  & 2  \\\hline
         4 & $\cdot$ & $\cdot$ & $\cdot$ & 2  \\ \hline
    \end{tabular}
    \caption{The distinguishing numbers of the enhanced hypercube $Q_{n,k}$ for $2 \le n \le 5$ and $1 \le k \le n-1$.}
    \label{tab:distQnk}
\end{table}

\begin{thm}\label{thm:distenhanced}
 For $n \ge 4$ and $1 \le k \le n-1$, $\dist(Q_{n,k}) = 2$. 
\end{thm}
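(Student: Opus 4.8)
The plan is to prove the two inequalities $\dist(Q_{n,k}) \ge 2$ and $\dist(Q_{n,k}) \le 2$ separately, with essentially all of the work going into the upper bound. For the lower bound I would note that $Q_{n,k}$ is vertex-transitive (Tzeng and Wei~\cite{TzWe1991}) and has more than one vertex, hence is not asymmetric, so $\dist(Q_{n,k}) \ge 2$. For the upper bound I would first clear away the boundary cases: when $n \in \{4,5\}$ the claim is exactly the content of Table~\ref{tab:distQnk}, and when $k=1$ we have $Q_{n,1}=FQ_n$, so $\dist(Q_{n,1})=2$ for $n\ge 4$ by Theorem~\ref{thm:distFQn}. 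It then remains to treat $n\ge 6$ and $2 \le k \le n-1$, where I would use the decomposition $Q_{n,k} = Q_{k-1}\,\Box\, FQ_{n-k+1}$ of Lu and Huang~\cite{LuHu2021} into the relatively prime factors $A = Q_{k-1}$ and $B = FQ_\ell$ with $\ell = n-k+1$ (relative primality being precisely what was verified in the proof of Proposition~\ref{prop:Qnkdet}).

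The core of the argument is a case split on $\ell$, exploiting that $\dist(Q_{k-1}) \le 3$ always (by Theorem~\ref{thm:distQn}) while $\dist(FQ_\ell)=2$ for $\ell \ge 4$ (by Theorem~\ref{thm:distFQn}). If $\ell \ge 4$, then $\dist(B)=2$ and $2 \le \dist(A) \le 3$, so Proposition~\ref{prop:cartesiandistlemma} applied with $G=B$ and $H=A$ gives $\dist(A \Box B)=2$ at once. If instead $\ell \in \{2,3\}$, then $k \in \{n-1,n-2\}$, and since $n \ge 6$ this forces $k-1 \ge \ell$, whence $3 \le |B| = 2^\ell \le 2^{k-1} = |A| \le |A|+1$; I would then apply Proposition~\ref{prop:cartesiandistlemma2} with $G=B$ and $H=A$ to get $\dist(A \Box B) \le \max\{2,\dist(Q_{k-1})\}$. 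Whenever $\dist(Q_{k-1})=2$, that is, whenever $k-1=1$ or $k-1 \ge 4$, this yields $\dist(Q_{n,k})=2$, finishing all of these subcases.

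The one scenario that escapes both propositions is when \emph{both} factors have distinguishing number at least $3$, i.e.\ $\ell \in \{2,3\}$ and simultaneously $k-1 \in \{2,3\}$. Tracking $\ell + (k-1) = n$ against $n \ge 6$, I expect this to collapse to the single graph $Q_{6,4} = Q_3 \,\Box\, FQ_3$, where $\dist(Q_3)=3$ and $\dist(FQ_3)=5$. This exceptional case is the main obstacle: here $|A|=|B|=8$, and neither Proposition~\ref{prop:cartesiandistlemma} nor~\ref{prop:cartesiandistlemma2} delivers a bound of $2$. To dispatch it I would fall back on Lemma~\ref{lem:inducedsg}: since $\det(Q_{6,4})=6$ by Proposition~\ref{prop:Qnkdet}, I would construct an explicit determining set $T$ for $Q_{6,4}$ (of size $6$, padding slightly if needed) whose induced subgraph $Q_{6,4}[T]$ is asymmetric, and then invoke Lemma~\ref{lem:inducedsg} to conclude $\dist(Q_{6,4})=2$. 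The delicate point is verifying asymmetry of $Q_{6,4}[T]$ against the full group $\aut(Q_3)\times\aut(FQ_3)$; a direct finite computation extending Table~\ref{tab:distQnk} to $n=6$ would settle this lone case just as well. With $Q_{6,4}$ resolved, every pair $(n,k)$ with $n \ge 4$ is covered and the theorem follows.
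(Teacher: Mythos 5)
Your proposal is correct and takes essentially the same route as the paper: computational verification for $n\in\{4,5\}$ and for the lone exceptional graph $Q_{6,4}$, Theorem~\ref{thm:distFQn} for $k=1$, and the decomposition $Q_{n,k}=Q_{k-1}\Box FQ_{n-k+1}$ into relatively prime factors combined with Propositions~\ref{prop:cartesiandistlemma} and~\ref{prop:cartesiandistlemma2} for everything else. The only real difference is cosmetic: you fold $k=2$ into the Proposition~\ref{prop:cartesiandistlemma} case by using $\dist(K_2)=2$ (valid under the proposition as stated, though note Theorem~\ref{thm:distQn} does not literally cover $Q_1$), whereas the paper treats $K_2\Box FQ_{n-1}$ with an explicit coloring of one folded-hypercube layer; and your fallback of settling $Q_{6,4}$ by direct computation is exactly what the paper does.
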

\begin{proof}
We verify the cases for $4\le n \le 5$ by computer. See Table~\ref{tab:distQnk}. Suppose that $n \ge 6$.

 We have $Q_{n,k} = Q_{k-1} \Box FQ_{n-k+1}$ is a decomposition into components that are relatively prime with respect to the Cartesian product. 
 We consider cases based on $k$. 
 
 For $k = 1$, we have $Q_{n,1} = FQ_n$, and the result follows directly from Theorem~\ref{thm:distFQn}. 
 
 For $k = 2$, we have $Q_{n,2} = K_2 \Box FQ_{n-1}$. Consider the coloring where one copy of $FQ_{n-1}$ has a 2-distinguishing coloring as given by Theorem~\ref{thm:distFQn}, and the other copy has a single color. Using the automorphism group given by Proposition~\ref{prop:cartesianautomorphismgroup}, it follows that this coloring is distinguishing. On the other hand, two colors are needed since otherwise the two copies of $FQ_{n-1}$ could be exchanged.

 For $k \ge 3$, we have $2 \le \dist(Q_{k-1}) \le 3$ by Theorem~\ref{thm:distQn}. For $n \ge k+3$, we have $\dist(FQ_{n-k+1}) = 2$ by Theorem~\ref{thm:distFQn}. Thus, for $3 \le k \le n-3$ result follows from Proposition~\ref{prop:cartesiandistlemma}.
 
 For $Q_{6,4}$, we have computationally verified that $\dist(Q_{6,4})=2$. Otherwise, for $k = n-2$ and $n-1$ with $n\ge 7$ and $n \ge 6$ respectively, letting the hypercube and folded hypercube components of the decomposition be $H$ and $G$ respectively in Proposition~\ref{prop:cartesiandistlemma2} gives $\dist(Q_{n,n-1}) \le 2$. Since the hypercube factor gives a nontrivial automorphism, equality holds.  
\end{proof}

\section{Augmented Hypercubes}\label{sec:augmented}

Introduced by Choudum and Sunitha in~\cite{ChSu2002}, augmented hypercubes are supergraphs of traditional hypercubes; the extra edges connect opposite vertices at each stage of an iterative construction.  
Let $AQ_1 = K_2$. To construct $AQ_n$ for $n>1$, start with two copies of the augmented cube $AQ_{n-1}$. Let $V(AQ_{n-1}^0)$ and $V(AQ_{n-1}^1)$ denote the set of binary $n$-vectors whose first position is  $0$ or  $1$ respectively. Add an edge between  $0a_2 \dots a_n$ and $1 b_2 \dots b_n$  if and only if either $a_i = b_i$ for all $2 \leq i \leq n$ (such edges would be in the traditional hypercube) or $a_i \neq b_i$ for all $2 \leq i \leq n$ (such edges are the augmented edges). Equivalently,  $\mbf a = a_1 a_2 \dots a_n$ and $\mbf b = b_1 b_2 \dots b_n$ are adjacent if and only if either these two vectors differ in exactly one position, or for some $0 \leq \ell \leq n-2$, the first $\ell$ positions are the same, and the remaining $n - \ell$ positions are all different.  In particular, $AQ_n$ is a supergraph of $FQ_n.$

 Note that $AQ_n$ is a $(2n-1)$-regular graph of order $2^n$.  In~\cite{ChSu2002}, Choudum and Sunitha prove that $AQ_n$ is vertex-transitive.  We can use their approach to prove a somewhat stronger result in Lemma~\ref{lem:switcheroo}. However, we will show later, $AQ_n$ is neither edge-transitive nor arc-transitive for $n \ge 3$.
 
 \begin{lemma}\label{lem:switcheroo} 
 Let $n\ge 2$. For all $\mbf a, \mbf b \in V(AQ_n)$, there is an automorphism $\rho \in \aut(AQ_n)$ such that $\rho(\mbf a)= \mbf b$ and $\rho(\mbf b) = \mbf a$. In particular, $AQ_n$ is vertex-transitive.
 \end{lemma}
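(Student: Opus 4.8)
The plan is to produce an explicit automorphism of $AQ_n$ that interchanges $\a$ and $\b$, and the natural candidate is a translation. First I would record the key structural fact that adjacency in $AQ_n$ is governed entirely by the difference pattern $\a+\b$ (with addition mod $2$): reading off the definition, $\a$ and $\b$ are adjacent if and only if either $\a+\b$ has exactly one nonzero entry, or $\a+\b = 0^\ell 1^{n-\ell}$ for some $0 \le \ell \le n-2$. Both the Hamming-distance-one edges and the augmented (suffix-block) edges are thus determined by $\a+\b$ alone, independent of $\a$ and $\b$ individually.

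Next, for any $\c \in \Z_2^n$ I would consider the translation $\rho_\c(\V)=\c+\V$ introduced in Section~\ref{sec:hypercubes}. Applying $\rho_\c$ to a pair of vertices leaves their difference pattern unchanged, since $(\c+\a)+(\c+\b)=\a+\b$. Combined with the previous paragraph, this shows $\rho_\c$ maps adjacent pairs to adjacent pairs and nonadjacent pairs to nonadjacent pairs; hence $\rho_\c \in \aut(AQ_n)$ for every $\c \in \Z_2^n$.

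Finally I would choose $\c = \a+\b$ and set $\rho = \rho_{\a+\b}$. Then $\rho(\a)=(\a+\b)+\a=\b$ and $\rho(\b)=(\a+\b)+\b=\a$, so this single automorphism swaps $\a$ and $\b$, giving the stated conclusion; vertex-transitivity is the special case obtained by letting $\b$ range over all vertices.

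The one step carrying any content is the first: verifying that the augmented edges, not just the hypercube edges, are invariant under adding a fixed vector to both endpoints. This is immediate once one notes that every clause in the adjacency condition refers only to which positions $\a$ and $\b$ agree or disagree in, and never to the actual bit values. I expect no further obstacle; in particular, no induction on the recursive construction of $AQ_n$ is needed, which is what makes this strengthening of the vertex-transitivity result of Choudum and Sunitha essentially free.
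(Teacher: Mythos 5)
Your proposal is correct and takes essentially the same route as the paper: both use the translation $\rho_{\a+\b}$, relying on the fact that translations are automorphisms of $AQ_n$. The only difference is that the paper dismisses that fact as ``easy to verify,'' whereas you spell out the verification via the observation that adjacency depends only on the difference pattern $\a+\b$ --- a worthwhile detail, but not a different proof.
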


\begin{proof}
 It is easy to verify that translations are automorphisms of $AQ_n$. If we let ${\mbf c} = {\mbf a} + {\mbf b}$, then
 $
 \rho_{\mbf c}(\mbf a) = \mbf a + (\mbf a+ \mbf b) = \mbf 0 + \mbf b = \mbf b
 $
 and similarly $\rho_{\mbf c}(\mbf b) = \mbf a.$
\end{proof}

In~\cite{ChSu2008}, Choudum and Sunitha find all automorphisms of $AQ_n$. We combine a couple of their results to bound the determining number.

\begin{thm}\label{thm:detAQ1}  
For all $n\geq 4$, $\det(AQ_n)\leq 3$.
\end{thm}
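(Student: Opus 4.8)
The plan is to exhibit an explicit determining set of size $3$ for $AQ_n$ and show directly that no nontrivial automorphism fixes all three vertices. The key input will be the description of $\aut(AQ_n)$ from Choudum and Sunitha~\cite{ChSu2008}, which the excerpt says this theorem combines. First I would invoke Lemma~\ref{lem:switcheroo} (or rather the fact that translations are automorphisms) to reduce to determining sets containing $\0$: if $S$ is a determining set for $AQ_n$, then so is $\rho_{\a}(S)$ for any $\a \in S$, and translating by an element of $S$ produces a determining set of the same size containing $\0$. Since any automorphism fixing $\0$ lies in the point-stabilizer of $\0$, the problem reduces to finding two further vertices $\U, \V$ such that the only element of $\ptstab(\0)$ fixing both $\U$ and $\V$ is the identity.

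The heart of the argument is understanding $\ptstab(\0)$ in $AQ_n$. Because $AQ_n$ is a supergraph of $FQ_n$, any automorphism of $AQ_n$ fixing $\0$ permutes the neighbors of $\0$ among themselves, and these neighbors have a rigid structure: the $n$ ``hypercube'' neighbors $\e_1, \dots, \e_n$ (vectors of weight $1$) and the $n-1$ ``augmented'' neighbors (vectors of the form $0\cdots 0 1 \cdots 1$ with a suffix of $1$'s of length at least $2$). I would use the Choudum--Sunitha classification to pin down exactly which permutations of these $2n-1$ neighbors extend to automorphisms fixing $\0$; typically this stabilizer is quite small (often dihedral or a small product), reflecting that the augmented structure breaks most of the symmetry that $Q_n$ enjoys. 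Once the stabilizer is explicitly in hand as permutations of the positions and the suffix-type edges, I would choose $\U$ and $\V$ with carefully chosen support so that their weight patterns and the positions of their $1$'s are fixed only by the identity element of $\ptstab(\0)$.

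The main obstacle I expect is not the counting but extracting from~\cite{ChSu2008} a usable, position-level description of the stabilizer of $\0$: the augmented edges introduce position-asymmetry (the rightmost positions behave differently from leftmost ones in the iterative construction), so the stabilizer is genuinely smaller and more constrained than the $S_n$ or $S_{n+1}$ that act on $Q_n$ and $FQ_n$. If the stabilizer turns out to be generated by only one or two natural involutions, then a single well-chosen vertex $\U$ might already break one generator and a second vertex $\V$ the remaining one, giving determining number at most $3$ (and the $\0$ accounts for the first vertex). I would verify the small cases $n = 4, 5$ by hand or computer as a sanity check against the general construction, since the iterative definition means low-dimensional behavior can be idiosyncratic. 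The upper bound $\det(AQ_n) \le 3$ should then follow cleanly once the stabilizer is understood; the subtlety is entirely in correctly reading off the automorphism group rather than in any delicate combinatorial estimate.
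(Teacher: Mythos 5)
Your plan is sound and could be completed, but it takes a genuinely different route from the paper's proof. The paper never needs an explicit description of the point-stabilizer: it cites two local-rigidity results from~\cite{ChSu2008}, namely Lemma 3.2 (any automorphism fixing a vertex $\x$ together with its neighbors $\x_1$ and $\x_n$, which differ from $\x$ in the first and the $n^{th}$ position respectively, must fix $N[\x]$ pointwise) and Theorem 3.3 (automorphisms agreeing on a closed neighborhood agree everywhere), and concludes at once that $\{\x,\x_1,\x_n\}$ is a determining set, uniformly for all $n\ge 4$. Your route---translating so that $\0$ lies in the set and then hunting for two vertices fixed only by the identity element of $\ptstab(\0)$---instead requires the full stabilizer classification from~\cite{ChSu2008}, which the paper reproduces as Table~\ref{tab:AutoAQ}: the stabilizer of any vertex has exactly eight elements $\varphi_1,\dots,\varphi_8$ when $n\ge 4$. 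Two cautions about your sketch. First, your fallback reasoning that ``a single well-chosen vertex $\U$ might already break one generator and a second vertex $\V$ the remaining one'' is not sufficient as stated: a vertex moved by each generator can still be fixed by a product of them, so you must check that no one of the seven nontrivial elements fixes both $\U$ and $\V$, not merely that each generator is broken. Second, for $n\in\{4,5\}$ the paper shows that \emph{every} pair of vertices is fixed by some nontrivial automorphism, so your two extra vertices beyond $\0$ are genuinely needed there and no shortcut via a single vertex exists. The payoff of your approach is that it is exactly the method of the paper's sharper Theorem~\ref{thm:DetAQ2}: with the table in hand, for $n\ge 6$ the single vertex $\y=1A01$ with $A=11\cdots 10$ has eight distinct images under $\varphi_1,\dots,\varphi_8$, giving $\det(AQ_n)=2$, while for $n=4,5$ one obtains the exact value $3$. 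In short, the paper's citation-based argument buys brevity and an explicit three-element determining set for all $n\ge 4$; your stabilizer-based argument costs more bookkeeping but, carried out carefully, yields exact determining numbers rather than just the upper bound.
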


\begin{proof}
Let $\mbf x \in V(AQ_n)$. Lemma 3.2 in~\cite{ChSu2008} asserts that if $\mbf x_1$  is the vertex of $AQ_n$ that differs from $\mbf x$ only in the first position and $\mbf x_{n}$ is the vertex that differs only in the  $n^{th}$ position, then any automorphism of $AQ_n$ that fixes $\mbf x, \mbf x_1$ and $\mbf x_n$ must fix every vertex in $N[\mbf x]$. Further, Theorem 3.3 in~\cite{ChSu2008} asserts that if $\varphi, \psi \in \aut(AQ_n)$ satisfy $\varphi(\mbf a) = \psi(\mbf a)$ for all $\mbf a \in N[\mbf x]$, then $\varphi=\psi$ on $AQ_n$. By definition, this means that every closed neighborhood is a determining set. Thus, $\{\mbf x, \mbf x_1, \mbf x_n\}$ is a determining set for $AQ_n$, and $\det(AQ_n)\leq 3$.
\end{proof}

We sharpen Theorem~\ref{thm:detAQ1} in the following.
 
  \begin{thm}\label{thm:DetAQ2} 
  The determining number of the augmented cube is
  \[
 \det(AQ_n) = \begin{cases}
 1, \quad &\mbox{if } n=1,\\
 3, &\mbox{if } n=2,\\
 4, &\mbox{if } n=3,\\
 3, &\mbox{if } n = 4 \text{ or }5,\\
 2, &\mbox{if } n \geq 6.
 \end{cases}
 \]
\end{thm}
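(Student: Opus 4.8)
The plan is to establish the determining number of $AQ_n$ for each $n$ by combining upper-bound constructions with matching lower-bound arguments. The small cases $n \le 5$ I would handle by direct computation or by exhibiting an explicit minimal determining set together with a nontrivial automorphism fixing any smaller candidate set; the values $\det(AQ_2)=3$, $\det(AQ_3)=4$, $\det(AQ_4)=\det(AQ_5)=3$ are presumably verified computationally or by hand using the full automorphism group from~\cite{ChSu2008}. The substantive claim is that $\det(AQ_n)=2$ for all $n \ge 6$, so the bulk of the proof should concentrate there.

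For the upper bound $\det(AQ_n) \le 2$ when $n \ge 6$, I would exhibit a specific pair of vertices $\{\mbf x, \mbf y\}$ and argue that the only automorphism fixing both is the identity. A natural first attempt is to take two vertices close in the augmented-cube metric and exploit the rigidity result from Theorem~\ref{thm:detAQ1}: we already know $\{\mbf x, \mbf x_1, \mbf x_n\}$ is a determining set, so the goal reduces to finding a single second vertex $\mbf y$ whose stabilizer (together with $\mbf x$) already forces $\mbf x_1$ and $\mbf x_n$ to be fixed. Since translation by $\mbf x$ is an automorphism, I would reduce without loss of generality to $\mbf x = \0$, so that any automorphism fixing $\0$ lies in the point-stabilizer of $\0$; I would then need the explicit description of $\ptstab(\0)$ in $\aut(AQ_n)$ from~\cite{ChSu2008} and show that a well-chosen second vertex $\mbf y$ is fixed by no nontrivial element of that stabilizer. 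The key structural input is that the stabilizer of $\0$ acts on the neighborhood and on the coordinate structure in a way that is pinned down by a single generic additional vertex once $n$ is large.

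For the lower bound $\det(AQ_n) \ge 2$ (equivalently, that $AQ_n$ is not determined by a single vertex), it suffices to observe that $AQ_n$ is vertex-transitive with a nontrivial point-stabilizer: by Lemma~\ref{lem:switcheroo} the graph is vertex-transitive, and for $n \ge 2$ the stabilizer of any vertex is nontrivial (for instance, coordinate permutations fixing $\0$ give nontrivial automorphisms), so no single vertex can be a determining set. This gives $\det(AQ_n) \ge 2$ cleanly for all relevant $n$.

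The main obstacle I anticipate is the upper-bound argument for $n \ge 6$: showing that a single cleverly chosen second vertex collapses the entire point-stabilizer of $\0$ to the identity. This requires a precise understanding of $\aut(AQ_n)$ — in particular which permutation-like and twist-like automorphisms fix $\0$ and how they act on coordinates — and then a combinatorial verification that the chosen $\mbf y$ is asymmetrically placed with respect to all of them. The threshold $n \ge 6$ (as opposed to the exceptional values $4,5$) strongly suggests that the stabilizer of $\0$ shrinks or becomes more rigid once $n$ is large enough, so the crux will be identifying exactly how the stabilizer structure changes at $n = 6$ and choosing $\mbf y$ to exploit precisely that change; the smaller cases fail the construction because the stabilizer retains extra symmetries that no single second vertex can break.
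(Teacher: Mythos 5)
Your plan for the main case $n\ge 6$ follows the same route as the paper: reduce to $\x=\0$ by translation, take the explicit description of $\ptstab(\0)$ from~\cite{ChSu2008}, and exhibit a second vertex fixed by no nontrivial element of that stabilizer. But as written there is a genuine gap: you never produce the second vertex, and producing it together with the verification \emph{is} essentially the entire content of the hardest case. The paper's choice is $\y = 1A01$ where $A$ consists of $n-4$ ones followed by a single zero; one then reads off from Table~\ref{tab:AutoAQ} that the eight images of $\y$ under the stabilizer of $\0$ are pairwise distinct (because $A$, $\overline{A}$, $A^R$, $\overline{A^R}$ are pairwise distinct), so only the identity fixes both $\0$ and $\y$.

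More importantly, the mechanism you anticipate for the threshold at $n=6$ is wrong, and it would misdirect the search. The stabilizer of a vertex does not ``shrink or become more rigid'' as $n$ grows: by~\cite{ChSu2008} it has order exactly $8$ for every $n\ge 4$, and each of its elements acts on the middle block $A$ of $n-3$ coordinates only through $A$, $\overline{A}$, $A^R$, or $\overline{A^R}$. What changes at $n=6$ is the existence of a \emph{regular orbit}: once $n-3\ge 3$ there exist strings $A$ with $A$, $\overline{A}$, $A^R$, $\overline{A^R}$ pairwise distinct, and a vertex built from such an $A$ has trivial stabilizer inside $\ptstab(\0)$; whereas for $n\in\{4,5\}$ every string of length $n-3\le 2$ satisfies $A=A^R$ or $A=\overline{A^R}$, which is exactly why every pair of vertices is fixed by some nontrivial automorphism and hence $\det(AQ_4)=\det(AQ_5)=3$. (Your deferral of $n=4,5$ to computation is legitimate for finite graphs, but the paper settles them by hand using precisely this palindrome/anti-palindrome dichotomy, together with $AQ_1=K_2$, $AQ_2=K_4$, and $\overline{AQ_3}=2C_4$ for the smallest cases.) Finally, your lower-bound parenthetical that ``coordinate permutations fixing $\0$'' give nontrivial automorphisms is not right in general: most coordinate permutations do not preserve the augmented edges. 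The nontrivial stabilizer should instead be read off from the same table --- for instance $\varphi_3$, the transposition of the last two coordinates, works --- after which your conclusion $\det(AQ_n)\ge 2$ stands.
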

 
  \begin{proof}
 
Since $AQ_1 = K_2$ and $AQ_2 = K_4$ are complete graphs, the smallest determining set in each case is the complement of a single vertex.  Thus, $\det(AQ_1) =1$ and $\det(AQ_2) = 3$.  The complement of $AQ_3$ is two disjoint copies of $C_4$ (one cycle is induced by $000$, $101$, $110$ and $011$, and the other by the remaining four vertices); each copy of $C_4$ has determining number 2, so $\det(AQ_3) = 4$.

Before tackling $n\geq 4$, let us look at a table provided by Choudum and Sunitha  in~\cite{ChSu2008} showing the action of all automorphisms of $AQ_n$ that fix $\mbf 0$, replicated in Table~\ref{tab:AutoAQ}.  This table shows that $\0$, and by vertex transitivity any vertex of $AQ_n$, has exactly 8 automorphisms of $AQ_n$ that fix it. In particular, this implies that $\det(AQ_n) > 1$. In this table, $A$ denotes an $(n-3)$-vector, $A^R$ denotes the vector achieved by reversing the order of the positions, $\overline{A}$ denotes the vector in which each position of $A$ is replaced with its opposite in $\Z_2$. Note that $\varphi_1$ is the trivial automorphism in this table.

  \begin{table}
  \caption{Action of Automorphisms that fix $\mbf 0$}
  \[
\begin{array}{c|cccccccc|}
& \varphi_1 & \varphi_2 & \varphi_3 & \varphi_4 &\varphi_5 & \varphi_6 & \varphi_7 & \varphi_8 \\
\hline
0A00 & 0A00 & 0A00 & 0A00 & 0A00 & 0A^R00 & 0A^R00 & 0A^R00 & 0A^R00 \\
0A01 & 0A01 & 0A01 & 0A10 & 0A10 & 1\overline{A^R}11 & 1\overline{A^R}11 & 1A^R00 & 1A^R00 \\
0A10 & 0A10 & 0A10 & 0A01 & 0A01 & 1A^R00 & 1A^R00 & 1\overline{A^R}11 & 1\overline{A^R}11\\
0A11 & 0A11 & 0A11 & 0A11 & 0A11 & 0\overline{A^R}11 & 0\overline{A^R}11  & 0\overline{A^R}11  & 0\overline{A^R}11 \\
1A00 &1A00 & 1\overline{A}11 &1A00 & 1\overline{A}11 & 0A^R10 & 0A^R01 & 0A^R10 & 0A^R01 \\
1A01 &1A01 & 1\overline{A}10 &1A10 & 1\overline{A}01 & 1\overline{A^R}01 & 1\overline{A^R}10 & 1A^R10 & 1A^R01 \\
1A10 & 1A10 & 1\overline{A}10 & 1A01 & 1\overline{A}10 & 1A^R10 & 1A^R01 & 1\overline{A^R}01 & 1\overline{A^R}10\\
1A11 & 1A11 & 1\overline{A}00 & 1A11 & 1\overline{A}00 & 0\overline{A^R}01 &  0\overline{A^R}10 & 0\overline{A^R}01 &  0\overline{A^R}10\\
\hline
\end{array}
\]\label{tab:AutoAQ}
\end{table}

Now assume $n\in\{4,5\}$.  By Theorem~\ref{thm:detAQ1}, to show that $\det(AQ_4) = \det(AQ_5) = 3$, it suffices to show that for any two distinct vertices $\mbf x$ and $\mbf y$, there exists a nontrivial automorphism fixing both. Since $AQ_n$ is vertex-transitive, we may assume $\mbf x = \mbf 0$. Notice that if $n=4$ then $A \in \{0,1\}$ and so $A=A^R$. Further, if $n=5$ then $A\in \{00,01,10,11\}$.  For $A\in \{00,11\}$ then $A=A^R$, while for $A\in\{01,10\}$ then $A=\overline{A^R}$. Thus, for $n=4$ and $n=5$ either $A=A^R$ or $A=\overline{A^R}$.

\begin{itemize} 
\item Suppose $\mbf y$ is of the form $1A01$.  If $A = A^R$, then $\varphi_8$ fixes both, and if $A = \overline{A^R}$, then $\varphi_5$ fixes $\0$ and $\y$.
 
\item Suppose $\mbf y$ is of the form $1A10$. If $A = A^R$, then $\varphi_5$ fixes both, and if $A = \overline{A^R}$, then $\varphi_8$ fixes both.
  
\item If $\mbf y$ is of the form $0A00$ $0A01$, $0A10$ or $0A11$,  then the nontrivial automorphism $\varphi_2$ fixes both $\0$ and $\mbf y$.  
      
\item If $\mbf y$ is of the form $1A00$ or $1A11$, then the nontrivial automorphism $\varphi_3$ fixes both.
\end{itemize}
  
Finally, assume $n \geq 6$. We must find a determining set consisting of two vertices. We let one of these be $\mbf 0$ and let the other be $\mbf y = 1A01$, where $A = 111 \dots 10$ (that is, $A$ consists of $n-4$ 1's followed by a single 0). Then
  \[
  \overline A = 000 \dots 01, \quad A^R = 011\dots 11, \quad \overline{A^R} = 100 \dots 00.
  \]
  In particular, $A$, $\overline A$, $A^R$ and $\overline{A^R}$ are distinct.  From Table~\ref{tab:AutoAQ}, the images of $\mbf y = 1A01$ under $\varphi_1, \varphi_2, \dots , \varphi_8$, the automorphisms that fix $\mbf 0$, are in order, the eight distinct vertices
  \[\mbf y = 1A01, \,
  1\overline A 10, \,
   1 A 10,\,
   1\overline A 01,\,
  1\overline{A^R}01,\,
   1\overline{A^R}10,\,
   1A^R 10,\,
   1A^R 01.
  \]
  Thus, the only automorphism that fixes both $\mbf 0$ and $\mbf y$ is the identity, and therefore $D= \{\mbf 0, \mbf y\}$ is a determining set.
  \end{proof}

 \begin{prop}\label{prop:AugmentedTransitive}
 For $n \leq 2$, $AQ_n$ is arc-transitive, but for $n \geq 3$, $AQ_n$ is neither arc-transitive nor edge-transitive.
 \end{prop}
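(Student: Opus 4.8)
The plan is to treat the two regimes separately and to reduce everything to a single, easily computed edge invariant. For $n \le 2$, the graphs $AQ_1 = K_2$ and $AQ_2 = K_4$ are complete, and a complete graph is arc-transitive because its automorphism group is the full symmetric group, which is transitive on ordered pairs of distinct vertices; so this case is immediate. All of the real work lies in showing that $AQ_n$ is \emph{not} edge-transitive for $n \ge 3$. Since (as noted in the introduction) arc-transitivity implies edge-transitivity, the failure of edge-transitivity will automatically give the failure of arc-transitivity, so I do not need a separate argument for arcs.

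The key tool will be the number of triangles through an edge $\{\a,\b\}$, equivalently $|N(\a)\cap N(\b)|$, which is an automorphism invariant. Since translations are automorphisms of $AQ_n$ (as used in Lemma~\ref{lem:switcheroo}), adjacency depends only on the difference $\a+\b$, so I would first record the set of edge difference vectors
\[
D=\{\e_1,\dots,\e_n\}\cup\{0^{\ell}1^{\,n-\ell}:0\le \ell\le n-2\},
\]
where $0^{\ell}1^{\,n-\ell}$ denotes $\ell$ zeros followed by $n-\ell$ ones; the first family comes from hypercube edges and the second from augmented edges. A one-line computation then shows that an edge with difference vector $\mathbf d\in D$ has exactly $|\{\mathbf x\in D:\mathbf x+\mathbf d\in D\}|$ common neighbors: writing $\mathbf x=\mathbf c+\a$ for a common neighbor $\mathbf c$ turns the two adjacency conditions into $\mathbf x\in D$ and $\mathbf x+\mathbf d\in D$, and the degenerate cases $\mathbf x\in\{\0,\mathbf d\}$ are excluded automatically because $\0\notin D$.

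With this formula I would evaluate the invariant on two explicit edges incident to $\0$: the hypercube edge to $\e_1$, with difference $\mathbf d=\e_1$, and the augmented edge to $\s:=0^{1}1^{\,n-1}$, with difference $\mathbf d=\s$. Checking each element of $D$ in turn gives exactly two common neighbors for $\{\0,\e_1\}$ (arising from $\mathbf x=\1$ and $\mathbf x=\s$) but four common neighbors for $\{\0,\s\}$ (from $\e_1,\e_2,\1$ and $0^{2}1^{\,n-2}$ when $n\ge 4$, and from $\e_1,\e_2,\e_3,\1$ when $n=3$). Since $2\neq 4$, no automorphism of $AQ_n$ can carry one of these edges to the other, so $AQ_n$ is not edge-transitive, and hence not arc-transitive, for every $n\ge 3$.

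The main obstacle is purely bookkeeping: one must enumerate $D$ correctly and, for each $\mathbf x\in D$, decide whether $\mathbf x+\mathbf d$ is again a single standard basis vector or a ones-suffix, while respecting the boundary constraint $\ell\le n-2$ and handling the small case $n=3$, where some suffix vectors collapse to single-coordinate vectors (which is why the list of four contributors looks different there). As a cross-check and an alternative route to non-arc-transitivity, I would also observe that Table~\ref{tab:AutoAQ} shows the stabilizer of $\0$ has order $8$; by vertex-transitivity, arc-transitivity would force this stabilizer to act transitively on the $2n-1$ neighbors of $\0$, but an orbit size must divide $8$, and the odd number $2n-1>1$ does not, giving an independent proof that $AQ_n$ is not arc-transitive for $n\ge 3$.
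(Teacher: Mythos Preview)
Your proof is correct and takes a genuinely different route from the paper's. The paper works directly with the explicit description of the stabilizer of $\0$ in $\aut(AQ_n)$ (Table~\ref{tab:AutoAQ}, due to Choudum and Sunitha): for $n\ge 4$ it exhibits two neighbors $\a=10\cdots 0$ and $\b=01\cdots 1$ of $\0$ and checks, row by row, that none of the eight stabilizing automorphisms sends $\a$ to $\b$, then upgrades this to non-edge-transitivity via a translation trick; the case $n=3$ is handled separately by citing a lemma of Choudum--Sunitha. Your argument instead uses the triangle count $|N(\a)\cap N(\b)|$ as an edge invariant, computing it purely from the difference set $D$ and obtaining $2$ versus $4$ for the two chosen edges. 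This is more elementary---it never appeals to the classification of $\aut(AQ_n)$---and it treats all $n\ge 3$ uniformly. One small caution on your secondary orbit--stabilizer remark: Table~\ref{tab:AutoAQ} is only asserted in the paper for $n\ge 4$ (indeed, since $\overline{AQ_3}\cong 2C_4$ one has $|\aut(AQ_3)|=128$, so the vertex stabilizer there has order $16$, not $8$), so that divisibility check is valid only for $n\ge 4$; but your primary triangle-count argument already covers $n=3$ cleanly, so this does not affect the proof.
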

  \begin{proof}
  As noted in the proof of Theorem~\ref{thm:DetAQ2}, $AQ_1 = K_2$ and $AQ_2 = K_4$; complete graphs are clearly arc-transitive. Recall that arc-transitivity implies edge-transitivity.
 
  Now assume $n \geq 4$. Let $\mbf a = 1A00$ where $A$ consists of $(n-3)$ 0's, and let $\mbf b = 0B11$ where $B$ consists of  $(n-3)$ 1's. Note that $\mbf a$ differs from $\0$ in exactly 1 position, and $\mbf b$ is the same as $\0$ for the first position then differs from $\0$ in all remaining positions. Thus, both are neighbors of $\0$. Any automorphism taking the arc $(\mbf 0, \mbf a)$ to the arc $(\mbf 0, \mbf b)$ would have to be an automorphism fixing $\mbf 0$. However, from the table in the proof of Theorem~\ref{thm:DetAQ2}, no such automorphism $\varphi$ satisfies $\varphi(\mbf a) = \mbf b$. 
 
  In fact, no automorphism can take the edge $\{\0, \mbf a\}$ to the edge $\{ \0, \mbf b\}$. From the above, no such  automorphism  can fix  $\mbf 0$.  Suppose  $\psi\in \aut(AQ_n)$ satisfies $\psi(\mbf 0) = \mbf b$ and $\psi(\mbf a) = \mbf 0$.  Let $\rho_{\b} \in \aut(AQ_n)$ be translation by $\mbf b$.
  Then $\rho_\b \circ \psi$ is an automorphism taking the arc $(\mbf 0, \mbf a)$ to the arc $(\mbf 0, \mbf b)$, which was proved  impossible.
 
Finally assume $n = 3$. In Lemma 3.2(1) of ~\cite{ChSu2008}, Choudum and Sunitha prove that every automorphism of $AQ_3$ that fixes $\mbf 0$ also fixes 011.  Hence, no automorphism takes the arc $(\mbf 0, 011)$ to $(\mbf 0, 100)$.
\end{proof}

\begin{thm} \cite{Cha2008}
The distinguishing number of the augmented cube is 
 \[
 \dist(AQ_n) = \begin{cases}
 2, \quad &\mbox{ if } n=1,\\
 4, &\mbox{ if } n=2,\\
 3, &\mbox{ if } n=3,\\
 2, &\mbox{ if } n \geq 4.
 \end{cases}
 \] 

\end{thm}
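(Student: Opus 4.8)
The plan is to dispose of $n \le 3$ directly and then give one argument for all $n \ge 4$. For $n = 1, 2$ the graph is a complete graph ($AQ_1 = K_2$, $AQ_2 = K_4$), and $\dist(K_m) = m$ because every vertex must receive its own color; this gives $\dist(AQ_1) = 2$ and $\dist(AQ_2) = 4$. For $n = 3$ I would use $\dist(G) = \dist(\overline G)$ together with the fact, recorded in the proof of Theorem~\ref{thm:DetAQ2}, that $\overline{AQ_3}$ is a disjoint union of two $4$-cycles. Since $\dist(C_4) = 3$, every $2$-coloring of one cycle is preserved by a nontrivial rotation or reflection of that cycle, which extends by the identity on the other cycle; hence $\dist(2C_4) \ge 3$. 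Coloring the two cycles with $3$-distinguishing colorings whose color-count profiles differ then rules out all nontrivial automorphisms, so $\dist(AQ_3) = \dist(2C_4) = 3$.

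Now fix $n \ge 4$. The bound $\dist(AQ_n) \ge 2$ is immediate from the nontrivial automorphisms in Table~\ref{tab:AutoAQ} (or from vertex-transitivity, Lemma~\ref{lem:switcheroo}). For the upper bound I would exhibit a single red set $R$ whose only color-preserving automorphism is the identity, exploiting the fact, established in the proof of Theorem~\ref{thm:DetAQ2}, that exactly the eight maps $\varphi_1, \dots, \varphi_8$ fix $\0$; combined with vertex-transitivity this gives $\aut(AQ_n) = \{\rho_{\mbf c} \circ \varphi_i : \mbf c \in \Z_2^n,\ 1 \le i \le 8\}$, a group of order $2^{n+3}$. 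Taking (for $n \ge 6$) the size-$2$ determining set $\{\0, \mbf y\}$ of Theorem~\ref{thm:DetAQ2} with $\mbf y = 1A01$, I would set $R = \{\0, \mbf y, \mbf z\}$ and show that $\mbf z$ can be chosen so that no nontrivial permutation of the three points of $R$ is realized by an automorphism. Because $\{\0, \mbf y\}$ is determining, the identity is the only automorphism fixing $R$ pointwise, so it remains to kill the three transpositions and two $3$-cycles. Writing a candidate automorphism as $\rho_{\mbf c} \circ \varphi_i$ and matching images reduces each case to a membership condition: a transposition of $\0$ and $\mbf y$ forces $\mbf c = \mbf y$ and $\varphi_i(\mbf y) = \mbf y$, hence $\sigma = \rho_{\mbf y}$, which moves $\mbf z$; while a transposition of $\mbf y$ and $\mbf z$ fixing $\0$ must be some $\varphi_i$ with $\varphi_i(\mbf y) = \mbf z$. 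Since the proof of Theorem~\ref{thm:DetAQ2} shows the eight vertices $\varphi_1(\mbf y), \dots, \varphi_8(\mbf y)$ are distinct, it suffices to choose $\mbf z$ outside the finitely many forbidden vertices — those of the form $\varphi_i(\mbf y)$ and $\mbf y + \varphi_i(\mbf y)$ for $i \ge 2$, together with the requirement that $\{\0, \mbf z\}$ also be a determining set — which is possible for all large $n$ by a counting argument. An equally valid route is to enlarge a determining set to at least six vertices inducing an asymmetric subgraph and invoke Lemma~\ref{lem:inducedsg}.

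The main obstacle is the uniform verification over $n$: I must confirm that the forbidden set is small enough that a suitable $\mbf z$ exists for every $n \ge 6$, and separately dispatch $n = 4$ and $n = 5$, where $\det(AQ_n) = 3$ so no two-element determining set is available. For these two values I would either take $R$ to be one of the size-$3$ determining sets guaranteed by Theorem~\ref{thm:detAQ1} and check directly that it has trivial setwise stabilizer, or verify $\dist(AQ_4) = \dist(AQ_5) = 2$ by a short computation. Tracking the action of the $\varphi_i$ on the parametrized block $A$ of $\mbf y$ is where the bookkeeping concentrates, but the smallness of $\aut(AQ_n)$ keeps every case finite and explicit.
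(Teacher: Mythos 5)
Your strategy is sound, but note first that the paper does not prove this statement at all: it is quoted from Chan~\cite{Cha2008}, so the only meaningful internal comparison is with the proof of Theorem~\ref{thm:costAQ}, which for all $n\ge 4$ exhibits the explicit three-element set $C=\{\0,\,1B1,\,0\overline{B}0\}$ (where $B$ is a string of $n-2$ zeros) and shows via Table~\ref{tab:AutoAQ} that its setwise stabilizer is trivial, thereby already producing a $2$-distinguishing coloring of $AQ_n$ for every $n\ge 4$ in one stroke. Your argument for $n\ge 6$ is a nonconstructive version of the same idea: you seek a three-element set $\{\0,\y,{\mbf z}\}$ with trivial setwise stabilizer by excluding bad choices of ${\mbf z}$, and your reduction of each nontrivial permutation of this set to a membership condition (writing $\sigma=\rho_{\c}\circ\varphi_i$ and using that the eight images $\varphi_i(\y)$ are distinct) is correct. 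Two improvements are worth making. First, your side requirement that $\{\0,{\mbf z}\}$ also be a determining set is superfluous --- the pointwise-fixing case uses only that $\{\0,\y\}$ is determining --- and dropping it makes the counting immediate: the forbidden set $\{\0\}\cup\{\varphi_i(\y)\}\cup\{\y+\varphi_i(\y)\}$ has at most $16$ elements, while $2^n\ge 64$ for $n\ge 6$; as written, you would owe a further argument that vertices ${\mbf z}$ for which $\{\0,{\mbf z}\}$ is determining are plentiful (this amounts to counting middle blocks $A$ with $A=A^R$ or $A=\overline{A^R}$). Second, your $n=4,5$ cases are genuinely deferred: the size-$3$ determining set of Theorem~\ref{thm:detAQ1} is not obviously rigid setwise, and you fall back on an unspecified computation, whereas the paper's set $C$ above handles $n=4$ and $n=5$ by exactly the same table argument, so adopting it removes both the case split and the counting argument. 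Your small cases are correct and match facts recorded in the paper: $AQ_1=K_2$ and $AQ_2=K_4$ give $\dist$ values $2$ and $4$; and for $n=3$, combining $\dist(AQ_3)=\dist(\overline{AQ_3})$ with $\overline{AQ_3}=2C_4$, the lower bound from $\dist(C_4)=3$ (a nontrivial color-preserving symmetry of one cycle extends by the identity) and the upper bound from $3$-colorings with distinct color profiles on the two cycles are both valid.
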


  For $n \geq 4$,  since $\dist(AQ_n) = 2$ we can consider the cost of 2-distinguishing.

\begin{thm}\label{thm:costAQ}
For all $n\geq 4$, $\rho(AQ_n) = 3$.
\end{thm}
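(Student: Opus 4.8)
The plan is to prove the two inequalities $\rho(AQ_n)\ge 3$ and $\rho(AQ_n)\le 3$ separately. For the lower bound, recall that a color class $C$ in a $2$-distinguishing coloring is precisely a set whose setwise stabilizer in $\aut(AQ_n)$ is trivial (an automorphism preserves the $2$-coloring exactly when it fixes $C$ setwise), and that $C$ and its complement have the same setwise stabilizer, so in any $2$-distinguishing coloring \emph{both} color classes have trivial setwise stabilizer. I would then rule out classes of size at most $2$. A monochromatic class is clearly non-distinguishing since $AQ_n$ has nontrivial automorphisms. A singleton $\{\a\}$ has setwise stabilizer equal to the pointwise stabilizer of $\a$, which by Table~\ref{tab:AutoAQ} and vertex-transitivity has order $8$, hence is nontrivial. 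For a $2$-element class $\{\a,\b\}$ with $\a\ne\b$, Lemma~\ref{lem:switcheroo} supplies an automorphism interchanging $\a$ and $\b$; this is nontrivial and fixes $\{\a,\b\}$ setwise. Thus every distinguishing color class has size at least $3$, so $\rho(AQ_n)\ge 3$.

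For the upper bound I would exhibit a determining set $T=\{\a,\b,\c\}$ of size $3$ whose setwise stabilizer is trivial; coloring $T$ one color and the remaining $2^n-3$ vertices the other then yields a $2$-distinguishing coloring with a color class of size $3$. Since $\det(AQ_n)\le 3$ for all $n\ge 4$ by Theorem~\ref{thm:detAQ1}, a set of size $3$ with trivial pointwise stabilizer exists. The key point is to arrange that no nontrivial permutation of $T$ is induced by an automorphism: any $\sigma$ in the setwise stabilizer permutes $\a,\b,\c$, and if that permutation is trivial then $\sigma$ fixes $T$ pointwise and so is the identity because $T$ is determining. A clean sufficient condition for eliminating the nontrivial permutations is that the three pairwise distances $d(\a,\b),\ d(\a,\c),\ d(\b,\c)$ in $AQ_n$ are distinct: automorphisms preserve distance, so each pair must map to a pair at the same distance, which forces each pair, and hence each vertex, to be fixed.

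To realize this for $n\ge 6$, I would start from the determining pair $\{\0,\y\}$ of Theorem~\ref{thm:DetAQ2} and append a third vertex $\c$; since any superset of a determining set is determining, $\{\0,\y,\c\}$ is automatically determining, and it only remains to choose $\c$ so that the three pairwise distances are distinct, which is available because the diameter of $AQ_n$ grows with $n$. For $n=5$, where $\det(AQ_n)=3$, I would write down an explicit determining triple realizing three distinct distances (the diameter is $3$, so distances $1,2,3$ can be achieved simultaneously).

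I expect the upper bound to be the main obstacle, and within it the smallest case $n=4$ to be sharpest: here the diameter of $AQ_4$ is only $2$, so three pairwise-distinct distances are impossible and the distance argument is unavailable. For this case I would instead verify directly that a chosen $3$-set has trivial setwise stabilizer by testing it against the finite automorphism group, using the description of $\aut(AQ_n)$ as the translations composed with the eight stabilizer-of-$\0$ automorphisms listed in Table~\ref{tab:AutoAQ}, and distinguishing the three vertices by a finer invariant (such as neighborhood intersection sizes) in place of distance. More broadly, the underlying difficulty is that $AQ_n$ is not a Cartesian product, so the characteristic-matrix machinery used for hypercubes and Hamming graphs does not apply; one must argue directly from the Choudum--Sunitha automorphism classification and the distance structure of $AQ_n$.
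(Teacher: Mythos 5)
Your lower bound is essentially the paper's: both rule out a color class of size $2$ via Lemma~\ref{lem:switcheroo} (you additionally spell out sizes $0$ and $1$ using the order-$8$ vertex stabilizer from Table~\ref{tab:AutoAQ}, which the paper leaves implicit). For the upper bound you take a genuinely different route. The paper exhibits a single triple $C=\{\0,\,1B1,\,0\overline{B}0\}$, with $B$ a string of $n-2$ zeros, that works uniformly for every $n\ge 4$: the vertices $1B1$ and $0\overline{B}0$ differ in all positions and so are adjacent, while $\0$ is adjacent to neither, so any automorphism preserving $C$ setwise must fix $\0$ and hence be one of $\varphi_1,\dots,\varphi_8$; the $1A01$ row of Table~\ref{tab:AutoAQ} shows the image of $1B1$ begins with $1$, forcing $C$ to be fixed pointwise, and the only nontrivial candidate $\varphi_8$ moves $0\overline{B}0$. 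Your route instead certifies trivial setwise stabilizer by combining a determining set with a rigidity certificate (three pairwise distinct distances); that idea is correct: distinct distances force pointwise fixing, and the determining property then kills the stabilizer.

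The gap lies exactly where your distance certificate cannot reach. For $n\ge 6$ the plan is sound once one verifies that $d(\0,\y)\ge 3$ for the determining pair of Theorem~\ref{thm:DetAQ2} (this is true, and then $\c$ can be taken as the neighbor of $\0$ on a geodesic to $\y$, giving distances $1$, $d-1$, $d$); appealing to the growth of the diameter alone does not suffice, since what matters is the distance between the two specific vertices of that pair. For $n=5$ you assert, without exhibiting one, that a determining triple realizing distances $1,2,3$ exists; note that an arbitrary geodesic triple need not be determining, so this genuinely requires an example and a check. For $n=4$ you correctly identify that the distance argument is unavailable and fall back on an unspecified finite search. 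All three items are fillable --- your proposed verification against the $2^n\cdot 8$ automorphisms $\rho_\c\circ\varphi_i$ would succeed --- so the approach is repairable rather than wrong. But the idea you are missing is the paper's: replace distances by the induced adjacency pattern inside the triple (one edge plus an isolated vertex) together with the table of stabilizer automorphisms. That certificate is diameter-independent, which is precisely what lets one construction cover $n=4$ and $n=5$ along with all larger $n$, with no case analysis.
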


\begin{proof}
Consider a $2$-coloring of $AQ_n$ where arbitrary vertices $\mbf a$ and $\mbf b$ are colored blue and the rest are colored red. 
By Lemma~\ref{lem:switcheroo}, there exists an automorphism that interchanges $\mbf a$ and $\mbf b$; this nontrivial automorphism preserves the color classes. Hence, this 2-coloring is not distinguishing. This shows  $\rho(AQ_n)>2.$

To show that it is possible to have a 2-distinguishing coloring with one color class of size 3, let $C = \{\mbf 0, 1B1, 0\overline B 0\}$, where $B$ is a string of $n-2$ 0's. Let $\psi$ be an automorphism of $AQ_n$ that preserves $C$ setwise. Note that $1B1$ and $0\overline B 0 $ are adjacent because they differ in all bits, but $\mbf 0$ is not adjacent to either of these two. Hence, $\psi$ must fix $\mbf 0$ and so must be one of $\varphi_1, \dots, \varphi_8$ from Table~\ref{tab:AutoAQ}. From the sixth row of the table, $\psi(1B1)$ must be a string beginning with $1$, which means $\psi(1B1) \neq 0\overline B 0$.  Hence, $\psi$ must fix $C$ pointwise; $\varphi_8$ is the only nontrivial automorphism that fixes both $\mbf 0$ and $1B1$, but $\varphi_8(0 \overline B 0) \neq 0 \overline B 0$. Thus, $\psi$ must be the identity.
 \end{proof}

\section{Locally Twisted Hypercubes}\label{sec:locallytwisted}

In~\cite{YaMeEv2005}, Yang, Megson, and Evans introduce the locally twisted hypercube, $LTQ_n$. We give a recursive definition here. The authors also give a non-recursive definition and show that $LTQ_n$ is isomorphic to a graph consisting of two copies of $Q_{n-1}$ with a perfect matching between them.

Let $LTQ_2=Q_2$. For $n>2$,  to construct $LTQ_n$, start with 2 disjoint copies of $LTQ_{n-1}$. Then prefix the set of the vertices in the first copy with a $0$ and the vertices in the second copy with a $1$. Finally, for all vertices $0x_2x_3\dots x_n$ in the first copy, add an edge to $1(x_2+x_n)x_3\dots x_n$ where addition is modulo $2$.

\begin{prop}\label{prop:LTQ}
  For $n\ge 4$, $\det(LTQ_n) = 1$, $\dist(LTQ_n)=2$, and $\rho(LTQ_n)=1$. Furthermore, $\det(LTQ_3)=\dist(LTQ_3)=2$ and $\rho(LTQ_3)=3$. 
\end{prop}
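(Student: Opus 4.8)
The plan is to reduce the entire statement for $n \ge 4$ to the single fact that $\det(LTQ_n) = 1$, and then to dispatch $n = 3$ by direct computation. The reduction is immediate from Lemma~\ref{lem:inducedsg}: if $\{\V\}$ is a determining set for $LTQ_n$, then taking $T = \{\V\}$, the induced subgraph $LTQ_n[T]$ is a single vertex and hence asymmetric, so Lemma~\ref{lem:inducedsg} gives that $LTQ_n$ is $2$-distinguishable with $\{\V\}$ as a color class. This yields $\dist(LTQ_n) \le 2$ and $\rho(LTQ_n) \le 1$. For the matching lower bounds I would first exhibit a nontrivial automorphism (below), so that $LTQ_n$ is not asymmetric; this forces $\dist(LTQ_n) \ge 2$ and $\rho(LTQ_n) \ge 1$, giving $\dist(LTQ_n) = 2$ and $\rho(LTQ_n) = 1$.

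Thus everything hinges on $\det(LTQ_n) = 1$. For the lower bound $\det(LTQ_n)\ge 1$, I would check that the first-bit flip $\rho_{\e_1}$, sending $v_1 v_2\dots v_n$ to $(1+v_1)v_2\dots v_n$, is a nontrivial automorphism. It swaps the two recursive copies of $LTQ_{n-1}$ via the identity on the last $n-1$ coordinates, so it preserves all intra-copy edges; for the twisted matching one checks that the edge $\{0x_2\dots x_n,\, 1(x_2+x_n)x_3\dots x_n\}$ is carried to the edge $\{1x_2\dots x_n,\, 0(x_2+x_n)x_3\dots x_n\}$, which is again a matching edge because $(x_2+x_n)+x_n = x_2$. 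Since $\rho_{\e_1}$ changes the first bit of every vertex, it is a fixed-point-free involution, so $LTQ_n$ is not asymmetric.

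The crux, and the main obstacle, is the upper bound $\det(LTQ_n) \le 1$: I must produce a vertex whose stabilizer in $\aut(LTQ_n)$ is trivial. The cleanest route is to show that every automorphism of $LTQ_n$ is a translation $\rho_{\mathbf c}$; since a nonidentity translation is fixed-point-free, the stabilizer of \emph{every} vertex would then be trivial and every singleton would be a determining set. A quick check suggests the translation subgroup is a proper, nontrivial subgroup: beyond $\rho_{\e_1}$ one verifies similarly that $\rho_{\e_2}$ is an automorphism, while flipping the last bit is \emph{not} (it clashes with the twist), which is consistent with $LTQ_n$ failing to be vertex-transitive. To prove there are no further automorphisms I would exploit the two-copies-with-a-twist recursion: after composing with a translation we may assume $\0$ is fixed, and then the asymmetry introduced by the twist must be shown to force the map to fix each neighbor of $\0$ and, by propagation, every vertex. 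Ruling out the surviving \emph{permutation-type} symmetries that the twist might still allow is the delicate step, and is precisely where the non-transitivity recorded in the table (via~\cite{ChMaYa2021}) is needed. If identifying the full group proves awkward, the same conclusion follows from a purely local rigidity argument: show directly that any automorphism fixing $\0$ fixes each of its neighbors, which are distinguished from one another by local invariants created by the twist, and then spread this outward to all of $V(LTQ_n)$.

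Finally, for $n=3$ the graph $LTQ_3$ has only $8$ vertices, so I would verify $\det(LTQ_3) = \dist(LTQ_3) = 2$ and $\rho(LTQ_3) = 3$ by direct computation: determine its (larger) automorphism group, check that no single vertex has trivial stabilizer while some pair does, which gives $\det(LTQ_3) = 2$; exhibit a $2$-distinguishing coloring whose smallest color class has size $3$ and confirm that no coloring with a smaller color class is distinguishing, which gives $\rho(LTQ_3) = 3$; and observe that $\dist(LTQ_3) = 2$ then follows.
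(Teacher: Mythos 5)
Your reduction scaffolding is fine: applying Lemma~\ref{lem:inducedsg} to a singleton determining set does give $\dist(LTQ_n)\le 2$ and $\rho(LTQ_n)\le 1$; your check that the first-bit flip is a nontrivial automorphism is correct and supplies the matching lower bounds; and the $n=3$ case is legitimately finite, so brute force is an acceptable substitute for the paper's appeal to $\aut(LTQ_3)\cong D_{16}$. The problem is that the one claim everything hinges on --- the upper bound $\det(LTQ_n)\le 1$ for $n\ge 4$, i.e.\ that some vertex has trivial stabilizer --- is never proved. Everything you write about it is in the future tense: ``the asymmetry introduced by the twist must be shown to force the map to fix each neighbor of $\0$,'' ``ruling out the surviving permutation-type symmetries \dots is the delicate step,'' and the fallback ``local rigidity argument'' names no invariants and gives no propagation step. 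This is not a routine verification that can be waved at: identifying $\aut(LTQ_n)$ is the main theorem of Chang, Ma and Yang~\cite{ChMaYa2021}, which asserts that for $n\ge 4$ the full automorphism group is exactly $\Z_2^{n-1}$, acting by adding a vector to the first $n-1$ bits. The paper's entire proof of Proposition~\ref{prop:LTQ} consists of citing that theorem (together with $\aut(LTQ_3)\cong D_{16}$ for the small case), after which all the stated values follow at once. You cite~\cite{ChMaYa2021} only for non-vertex-transitivity, which is much weaker than what you need: knowing $LTQ_n$ is not vertex-transitive says nothing about whether any vertex stabilizer is trivial, so it cannot carry ``the delicate step'' you assign to it.

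Concretely, then: either invoke the full group-determination result of~\cite{ChMaYa2021} (which is what the paper does, making the proposition essentially a corollary), or actually execute a rigidity argument --- produce explicit local invariants created by the twist that distinguish the neighbors of $\0$ from one another, prove an automorphism fixing $\0$ must fix each of them, and give a genuine induction outward over distance. None of that appears in the proposal, and it is the mathematical heart of the statement; as written, $\det(LTQ_n)=1$ rests on an unexecuted sketch, so the proposal does not constitute a proof.
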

\begin{proof}
 By~\cite{ChMaYa2021}  for $n\geq 4$, $\aut(LTQ_n)=\Z_2^{n-1}$ where each automorphism acts by translation. More precisely, each automorphism acts by adding an element of $\Z_{2}^{n-1}$ to the first $n-1$ bits of each vertex in $LTQ_n$. Thus, the only automorphism that fixes $\bf{0}$ is trivial and $\det(LTQ_n) = 1$, $\dist(LTQ_n)=2$, and $\rho(LTQ_n)=1$. By~\cite{ChMaYa2021}, when $n=3$, $\aut(LTQ_3)=D_{16}$, the automorphisms of the octagon. We conclude $\det(LTQ_3)=\dist(LTQ_3)=2$ and $\rho(LTQ_3)=3$. 
\end{proof}

\section{Open Questions}

Theorem~\ref{thm:distdetpowers} shows that for $n\ge 4$, we have $\det(Q_n^2) \le n$ and $\rho(Q_n^2) \le n+1$. From~\cite{MiPe1994}, this extends to all even powers of hypercubes. We ask the following: 

\begin{quest} Is there a better upper bound, or can we find an exact value, for $\det(Q_n^2)$? 
\end{quest}

\begin{quest} Is there a better upper bound, or can we find an exact value, for $\rho(Q_n^2)$?
\end{quest}

For the determining number, from computation, we have that $\det(Q_4^2)=4$, $\det(Q_5^2)=5$, and $\det(Q_6^2)=4$. Thus, the upper bound in Theorem~\ref{thm:distdetpowers} is sharp, but does not always hold. 

Theorem~\ref{thm:DistHam} characterizes the $n$ and $m$ for which $H(m,n)$ is 2-distinguishable. For $2$-distinguishable $H(m,n)$, Theorem~\ref{thm:CostHam} gives that the cost is one of two values when $2 \le m -1 \le n$. However, we ask what happens when this inequality does not hold: 

\begin{quest} What is $\rho(H(m,n))$ when $\dist(H(m,n))=2$, but $n<m-1$?
\end{quest}
We note that by Theorem~\ref{thm:DistHam}, this must occur for $m \ge 4$ and $n \ge 2$. 

Finally, Theorem~\ref{thm:distFQn} shows that $\dist(FQ_n) = 2$ when $n \ge 4$ and Theorem~\ref{thm:distenhanced} shows that $\dist(Q_{n,k}) = 2$ when $n \ge 6$ and $1 \le k \le n-1$. While Corollary~\ref{cor:costfolded} shows that $\rho(FQ_n) = O(n \lg n)$, we ask: 

\begin{quest}
Is there a better upper bound for $\rho(FQ_n)$? 
\end{quest}

\begin{quest}
What are bounds for $\rho(Q_{n,k})$?
\end{quest}

\section*{Acknowledgments}

The work in this article is a result of a collaboration made possible by the Institute for Mathematics and its Applications' Workshop for Women in Graph Theory and Applications, August 2019. We also thank K.\ E.\ Perry for helpful discussions.

\end{document}